\newtheorem{theorem}{Theorem}[section]
\newtheorem{lemma}[theorem]{Lemma}
\newtheorem{proposition}[theorem]{Proposition}
\newtheorem{corollary}[theorem]{Corollary}
\newtheorem{definition}[theorem]{Definition}
\newtheorem{ex}[theorem]{Example}
\newenvironment{example}{\begin{ex} \rm}{\end{ex}}
\renewcommand{\caption}{}
\numberwithin{equation}{section}
\def\&{\wedge}
\newcommand{\bb}{\mathbb}
\newcommand{\R}{\bb{R}}
\newcommand{\C}{\bb{C}}
\newcommand{\Q}{\bb{Q}}
\newcommand{\Z}{\bb{Z}}
\newcommand{\N}{\bb{N}}
\DeclareMathOperator{\id}{id}
\title{Local Lagrangian Floer Homology of Quasi-Minimally Degenerate Intersections}
\author{Shamuel Auyeung}
\begin{document}
	\maketitle
	
	\begin{abstract}
		\noindent We define a broad class of local Lagrangian intersections which we call quasi-minimally degenerate (QMD) before developing techniques for studying their local Floer homology. In some cases, one may think of such intersections as modeled on minimally degenerate functions as defined by Kirwan. One major result of this paper is: if $L_0,L_1$ are two Lagrangian submanifolds whose intersection decomposes into QMD sets, there is a spectral sequence converging to their Floer homology $HF_*(L_0,L_1)$ whose $E^1$ page is obtained from local data given by the QMD pieces. The $E^1$ terms are the singular homologies of submanifolds with boundary that come from perturbations of the QMD sets. We then give some applications of these techniques towards studying affine varieties, reproducing some prior results.
	\end{abstract}
	
	\tableofcontents
	
	\section{Introduction}
	
	The aim of this paper is to present a general framework which aids in the computation of certain Floer-theoretic invariants using local data. The main inspiration is a large class of minimally degenerate functions, a term coined in Kirwan's thesis \cite{Kirwan}. Here, we give a definition that is somewhat more general than the one Kirwan originally gave and later on in Section 2, will further generalize the definition. However, in more recent work with Penington, Kirwan has considered very general functions as well \cite{KirwanPenington}.
	
	\begin{definition} \label{mindeg}
		Let $f : M \to \R$ be a smooth function on a manifold $M$. A set $C$ is called \textbf{minimally degenerate} if the following conditions hold.
		\begin{enumerate}
			\item $C$ is a compact set contained in the set of critical points for $f$ and $f$ is constant on $C$. $C$ has an isolating open neighborhood $V$ which means that inside of $V \setminus C$, $f$ does not have any critical points. Such a $C$ is called a critical subset of $f$.
			
			\item There is a submanifold $S$ containing $C$ such that $f|_S$ takes $C$ as its minimum set.
			
			\item At every point $x \in C$, the tangent space $T_x S$ is maximal among all subspaces of $T_x M$ on which the Hessian $\text{Hess}_x f$ is positive semi-definite (synonymously, nonnegative).
		\end{enumerate}
		
		\noindent If the critical points of $f$ is a disjoint union $\bigsqcup C$ where each $C$ is minimally degenerate, then we say that $f$ is minimally degenerate.
	\end{definition}
	
	In effect, minimal degeneracy means that critical sets can be as degenerate as minima but
	no worse. This large class of functions includes many interesting examples such as Morse-Bott functions or functions on varieties which have subvarieties with singularities as critical sets (see Section 2). In symplectic geometry, the importance of this definition arises when considering the norm square of a moment map $|\mu|^2$. In general, $|\mu|^2$ is not Morse-Bott but may be minimally degenerate and hence, can still be studied via Morse theoretic techniques. For this reason, Definition \ref{mindeg} is sometimes referred to as a Morse-type definition. Kirwan applied such tools to $|\mu|^2$ which are a major element of her proof of Kirwan surjectivity. This result is celebrated for its importance towards studying symplectic quotients and geometric invariant theory.
	
	\subsection{Organization of the Paper}
	
	In Section 2, we first summarize some of the properties of minimal degenerate functions before expanding on Definition \ref{mindeg} by introducing the definitions of \textbf{flattened degeneracy} and \textbf{quasi-minimal degeneracy}. As is often the case in mathematics, defining something isn't difficult but defining something \textit{useful} can be. We hope to demonstrate the usefulness of these definitions by proving a series of results. The main result is the existence of a $C^1$-small perturbation which enlarges a flattened degenerate critical set into a submanifold with boundary without changing the homotopy type of the critical set. 
	
	In Section 3, we define flattened and quasi-minimal degeneracy for a subset $C$ of the intersection of a pair of Lagrangians. Part of the definition involves a submanifold $S$, much like in Definition \ref{mindeg} and the definitions of Section 2. In fact, although the Lagrangian definition of quasi-minimal degeneracy is fairly general, in some cases, one can think of $C$ as locally modeled on minimally degenerate functions. Indeed, later on in Section 3, we prove a result relating the ``Morse'' and Lagrangian definitions of quasi-minimal degeneracy in the case that one of the Lagrangians is the graph of an exact 1-form.
	
	Before proving the ``Morse'' implies Lagrangian result however, we establish the existence of a $C^1$-small perturbation of the Lagrangians locally around an isolated subset $C$ of the intersection. Like the ``Morse'' case, the perturbation yields a codim 0 submanifold with boundary $\Sigma$ of $S$. The process by which we do this can be intuitively thought of as ``thickening'' the intersection and we shall refer to these $\Sigma$ as \textbf{thickenings} of $C$. This is the content of Theorem \ref{perturb} and is the key technical result which we use to extend a result of Pozniak in Section 5. One primary motivation behind constructing such a specific perturbation is this: in Floer theory, genericity is a double-edged sword. For example, a small generic perturbation of a Hamiltonian function results in gaining the favorable property of nondegeneracy yet the perturbed function can hardly be studied precisely because it is generic. Therefore, it is often more helpful to perturb in a controlled way at the expense of having some amount degeneracy as a result.
	
	The purpose of Section 4 is to give an exposition of the results from Pozniak's thesis \cite{Pozniak}. The main result that we will use is Theorem \ref{Pmain_thm}. It roughly says: if two Lagrangians intersect cleanly in an isolated neighborhood, then the local Floer homology is determined by the singular homology of the clean intersection. The necessary definitions for understanding this theorem are provided in the section.
	
	As stated above, in Section 5, we use Theorem \ref{perturb} to extend Pozniak's main result to give a stronger Theorem \ref{main_thm}: if two Lagrangians have a quasi-minimally degenerate intersection in an isolated neighborhood, then the local Floer homology is determined by the singular homology of the intersection.
	
	In Section 6, we extract a spectral sequence from Theorem \ref{main_thm}, much in the same way that Seidel extracted a spectral sequence from Theorem \ref{Pmain_thm} in \cite{SeidelKnot}. The local data obtained from the isolated neighborhoods form the $E^1$ page of the spectral sequence and converges to the (global) Lagrangian Floer homology. For the sake of simplifying the exposition, we shall ignore orientations and work with $\Z_2$ coefficients.
	
	\begin{theorem}
		Suppose $L_0 \cap L_1$ decomposes into $\bigsqcup C_p$ where each $C_p$ is quasi-minimally degenerate. Let $\Sigma_p := \Sigma_{C_p}$ be the thickening for $C_p$. Then there is a spectral sequence which converges to $HF_*(L_0,L_1)$ and whose $E^1$-term is
		\[E^1_{pq} =
		\begin{cases}
			H_{p+q-\iota(\Sigma_p)}(C_p;\Z/2), & 1 \leq p \leq r; \\
			0, & \text{otherwise}.
		\end{cases}\]
	\end{theorem}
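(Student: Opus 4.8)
The plan is to reproduce, in the Floer-theoretic setting, the standard passage from a Morse function with degenerate critical sets to a spectral sequence via a filtration by critical levels; this is precisely the strategy Seidel uses in \cite{SeidelKnot} to extract a spectral sequence from Pozniak's Theorem \ref{Pmain_thm}, except that the local input is now the stronger computation of Theorem \ref{main_thm}.

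First I would invoke Theorem \ref{perturb} to perturb $L_0$ and $L_1$ by a $C^1$-small amount inside disjoint isolating neighborhoods $V_p \supset C_p$, producing the thickenings $\Sigma_p$ and a transversally intersecting pair whose intersection points all lie in $\bigsqcup_p V_p$. Because the perturbation is $C^1$-small, no intersection points are created outside the $V_p$ and each isolating property is preserved. The perturbation clusters the new generators near $C_p$ into a narrow band of action values, and I arrange these bands to be pairwise disjoint; ordering the pieces $C_1,\dots,C_r$ by increasing action then defines an increasing filtration $0 = F_0 \subseteq F_1 \subseteq \cdots \subseteq F_r = CF_*(L_0,L_1)$, where $F_p$ is generated by all intersection points whose action lies in the bands belonging to $C_1,\dots,C_p$. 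Since the Floer differential never increases action, each $F_p$ is a subcomplex.

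Next I would identify the $E^1$ page. By construction the associated graded $F_p/F_{p-1}$ is freely generated by the intersection points near $C_p$, and its induced differential counts only those rigid trajectories whose action stays within the band of $C_p$; these are exactly the Floer trajectories that remain inside $V_p$, since any trajectory leaving $V_p$ must join generators in different bands and hence strictly lowers the filtration degree. Therefore $F_p/F_{p-1}$ is the local Floer complex of $L_0 \cap L_1$ in the isolating neighborhood $V_p$, and Theorem \ref{main_thm} identifies its homology with the singular homology of $C_p$ shifted by the index $\iota(\Sigma_p)$ of the thickening. Recalling the convention $E^1_{pq} = H_{p+q}(F_p/F_{p-1})$ and writing the local Floer homology in total degree $n$ as $H_{n-\iota(\Sigma_p)}(C_p;\Z/2)$, I obtain
\[
E^1_{pq} = H_{p+q-\iota(\Sigma_p)}(C_p;\Z/2), \qquad 1 \le p \le r,
\]
and $E^1_{pq} = 0$ otherwise. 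As the filtration is finite, the spectral sequence converges, and its abutment is the homology of the total complex, namely $HF_*(L_0,L_1)$.

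The main obstacle I expect is justifying the identification of $F_p/F_{p-1}$ with the genuine local Floer complex appearing in Theorem \ref{main_thm}: one must run a compactness-and-gluing argument showing that the trajectories counted by the induced differential are precisely those trapped in $V_p$, with no breaking or escape that would corrupt the local count, and that the action bands genuinely separate the pieces. The $C^1$-smallness and isolating-neighborhood hypotheses furnished by Theorem \ref{perturb} are exactly what rule out such pathologies. A secondary point is to check that the perturbations on the various $V_p$ are compatible with a single global almost complex structure, so that the local computations assemble coherently into one filtered complex.
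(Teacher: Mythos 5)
Your overall strategy is the same as the paper's: order the pieces $C_p$ by action, filter $CF_*(L_0,L_1)$ by the subcomplexes generated inside the isolating neighborhoods, identify the associated graded pieces with local Floer complexes, and feed in Theorem \ref{main_thm} to compute the $E^1$ page. The filtration/convergence part of your argument is fine, and your worry about assembling the local perturbations into one global pair $(H,J)$ is exactly the point the paper handles by staying inside Seidel's framework from \cite{SeidelKnot}.

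The genuine gap is the grading. You write that ``Theorem \ref{main_thm} identifies its homology with the singular homology of $C_p$ shifted by the index $\iota(\Sigma_p)$,'' but Theorem \ref{main_thm} contains no such shift: it gives an isomorphism $HF_*(\mathcal{U},\Z_2) \cong H^{sing}_*(\Sigma,\Z_2)$ with no specification of how the Floer grading sits inside the global complex, and without that the formula $E^1_{pq}=H_{p+q-\iota(\Sigma_p)}(C_p;\Z/2)$ is not yet meaningful. To produce the shift one needs the Maslov-index apparatus: a coherent choice of indices $i(\gamma_x)$ (available only when the relevant class $\chi \in H^1(P(L_0,L_1),\Z)$ vanishes), the fact that $x \mapsto i(\gamma_x)$ is locally constant along a \emph{clean} intersection, and then --- since a QMD set $C_p$ is not cleanly cut out, so $\dim(T_xL_0 \cap T_xL_1)$ can jump along it and local constancy is not automatic --- a comparison showing the index is unchanged when one passes from $C_p$ to its thickening. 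This is precisely the paper's Index Lemma, $i(C)=i(\Sigma_C)$, proved via the local Morse model: the thickening perturbation kills the positive eigenvalues of the Hessian while preserving the dimension of the negative eigenspace, so the Morse-type index, and hence the Maslov-type index, is unchanged. Without this lemma your $E^1$ formula could be off by a $p$-dependent (or even locally varying) shift. A secondary inaccuracy: Theorem \ref{perturb} does not produce a ``transversally intersecting pair''; it enlarges each $C_p$ into a clean-type intersection along the manifold-with-boundary $\Sigma_p$, and the actual generators of the local complexes come from the Morse function on $\Sigma_p$ supplied by the adapted Pozniak argument in Theorem \ref{main_thm}, not from transverse double points.
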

	
	As applications, we perform four brief demonstrations in Section 7: we compute the Hamiltonian Floer homology of a particular affine variety, give an alternative method for studying certain manifolds with corners, study the $E^1$ page for a particular log Calabi-Yau, and show how the spectral sequence may be applied to situations beyond that of log Calabi-Yau.
	
	The first and fourth examples may have been computed before but the author does not know where they may appear in the literature. The second and third examples have been previously computed but relied heavily on structure which would not be available in more general settings. For example, Ganatra and Pomerleano in \cite{GanatraPomerleano} computed local Hamiltonian Floer cohomology of certain types of minimally degenerate families of orbits which appear as manifolds with corners. In this paper, our result is able to compute local Hamiltonian Floer cohomology for all such families of minimally degenerate orbits. In a different vein, Pascaleff, in \cite{Pascaleff1}, computed wrapped Lagrangian Floer cohomology of certain Lagrangian sections in a log Calabi-Yau surface. In this paper, we indicate how the spectral sequence aids in computing wrapped Floer cohomology for many other Lagrangians inside smooth affine surfaces beyond the log Calabi-Yau case.
	
	The final section is less mathematical and more conjectural. We speculate about other applications and research directions of minimal degeneracy.
	
	\subsection{Acknowledgements} 
	I would like to thank my advisor Mark McLean for his invaluable encouragement and guidance. Indeed, this project was first sketched in a talk he gave \cite{McLean3} and I am grateful for the opportunity to work on the details and also discover some further directions. I am also very grateful to Andrew Hanlon, Jiahao Hu, Lisa Marquand, Kevin Sackel, and Yao Xiao for fruitful discussions and to Aleksandar Milivojevic for introducing me to mathcha.io, the tool used to make most of the figures in this paper.
	
	\section{Definitions and Basic Results}
	
	In the introduction, Definition \ref{mindeg} tells us what it takes for a set to be minimally degenerate and also what it means for functions to be minimally degenerate. Here are two concrete examples to compare. One Morse-Bott example is that of the height function $h$ on a torus, ``laid on its side.'' The critical submanifold for the height function is a disjoint union of two circles; the function takes its maximum on one circle and minimum on the other. Call $C_M$ the circle on which it is a maximum. The definition of a minimally degenerate function requires that $C_M$ is contained in a submanifold $S$ such that when restricting $h$ to $S$, $h|_S$  takes its minimum on $C_M$. Here, it is convenient to simply let $S:=C_M$ so that $h|_S$ is constant and thus, $C_M$ is both the maximum and minimum set of $h|_S$. Here is a cartoon of the situation where we depict only one of the circles.
	
	\begin{center}
		\tikzset{every picture/.style={line width=0.75pt}} %set default line width to 0.75pt        
		
		\begin{tikzpicture}[x=0.75pt,y=0.75pt,yscale=-1,xscale=1]
			%uncomment if require: \path (0,228); %set diagram left start at 0, and has height of 228
			
			%Curve Lines [id:da3470805727591073] 
			\draw    (101,93) .. controls (115,84) and (136,80) .. (146,96) ;
			%Curve Lines [id:da2442457515154004] 
			\draw    (95,91) .. controls (113,111) and (134,110) .. (153,93) ;
			%Curve Lines [id:da25298082585142945] 
			\draw  [dash pattern={on 0.84pt off 2.51pt}]  (83,98) .. controls (93,63) and (166,73.5) .. (165,95) ;
			%Curve Lines [id:da548228069357011] 
			\draw  [dash pattern={on 0.84pt off 2.51pt}]  (83,98) .. controls (80,117) and (164,120.5) .. (165,95) ;
			%Straight Lines [id:da6548072914529051] 
			\draw    (251,62.5) -- (251,126.5) ;
			%Straight Lines [id:da1987795096717424] 
			\draw    (196,95.5) -- (235,95.5) ;
			\draw [shift={(237,95.5)}, rotate = 180] [color={rgb, 255:red, 0; green, 0; blue, 0 }  ][line width=0.75]    (10.93,-3.29) .. controls (6.95,-1.4) and (3.31,-0.3) .. (0,0) .. controls (3.31,0.3) and (6.95,1.4) .. (10.93,3.29)   ;
			%Shape: Ellipse [id:dp11368288182042119] 
			\draw   (67,95) .. controls (67,77.6) and (92.07,63.5) .. (123,63.5) .. controls (153.93,63.5) and (179,77.6) .. (179,95) .. controls (179,112.4) and (153.93,126.5) .. (123,126.5) .. controls (92.07,126.5) and (67,112.4) .. (67,95) -- cycle ;
			
			% Text Node
			\draw (209,74.4) node [anchor=north west][inner sep=0.75pt]    {$h$};
			% Text Node
			\draw (263,84.4) node [anchor=north west][inner sep=0.75pt]    {$\mathbb{R}$};
		\end{tikzpicture}
		
		\vspace{5mm}
		\caption{The height function on a torus}
	\end{center}
	
\begin{example} \label{qmdex}
	A minimally degenerate example to have in mind begins with a compact genus-2 surface $M$ embedded in $\R^3$ (we'll suppress notation ordinarily used to denote embeddings). The embedding is such the height function $h(x,y,z) = z$ has critical points which form two figure 8's—a subvariety, call them $E_1$ and $E_2$. Observe that the dimension of $\ker \text{Hess} \, h$ is not constant along the connected components of the critical points. For $E_1$, the minimum figure 8, $M$ itself serves as the needed submanifold containing this minimal set. Here is a picture.
	
	\begin{center}
		\tikzset{every picture/.style={line width=0.75pt}} %set default line width to 0.75pt        
		
		\begin{tikzpicture}[x=0.75pt,y=0.75pt,yscale=-1,xscale=1]
			%uncomment if require: \path (0,228); %set diagram left start at 0, and has height of 228
			
			%Curve Lines [id:da3470805727591073] 
			\draw    (101,93) .. controls (115,84) and (136,80) .. (146,96) ;
			%Curve Lines [id:da6441641731355507] 
			\draw    (194,92) .. controls (208,83) and (229,79) .. (239,95) ;
			%Curve Lines [id:da2442457515154004] 
			\draw    (95,92) .. controls (113,112) and (134,111) .. (153,94) ;
			%Curve Lines [id:da913176620220254] 
			\draw    (187,92) .. controls (205,112) and (226,111) .. (245,94) ;
			%Curve Lines [id:da25298082585142945] 
			\draw  [dash pattern={on 0.84pt off 2.51pt}]  (83,98) .. controls (93,63) and (134,62) .. (165,95) ;
			%Curve Lines [id:da32321312436714567] 
			\draw  [dash pattern={on 0.84pt off 2.51pt}]  (165,95) .. controls (191,129) and (254,127) .. (256,102) ;
			%Curve Lines [id:da49268590269719104] 
			\draw  [dash pattern={on 0.84pt off 2.51pt}]  (165,103) .. controls (185,67) and (257,66) .. (256,102) ;
			%Curve Lines [id:da548228069357011] 
			\draw  [dash pattern={on 0.84pt off 2.51pt}]  (83,98) .. controls (80,117) and (136,130) .. (165,103) ;
			%Straight Lines [id:da6548072914529051] 
			\draw    (341,47.5) -- (341,143.5) ;
			%Straight Lines [id:da1987795096717424] 
			\draw    (290,95.5) -- (329,95.5) ;
			\draw [shift={(331,95.5)}, rotate = 180] [color={rgb, 255:red, 0; green, 0; blue, 0 }  ][line width=0.75]    (10.93,-3.29) .. controls (6.95,-1.4) and (3.31,-0.3) .. (0,0) .. controls (3.31,0.3) and (6.95,1.4) .. (10.93,3.29)   ;
			%Curve Lines [id:da5290539127119533] 
			\draw    (81,67.5) .. controls (121,37.5) and (148,94.5) .. (181,63.5) ;
			%Curve Lines [id:da17414260053439312] 
			\draw    (181,63.5) .. controls (211,32.5) and (260,71.5) .. (264,86.5) ;
			%Curve Lines [id:da25701683703927825] 
			\draw    (147,132) .. controls (187,102) and (207,162) .. (247,132) ;
			%Curve Lines [id:da09469373282351712] 
			\draw    (73,111.5) .. controls (77,130.5) and (119,152.5) .. (147,132) ;
			%Curve Lines [id:da39020724728765366] 
			\draw    (73,111.5) .. controls (69,100.5) and (67,80.5) .. (81,67.5) ;
			%Curve Lines [id:da6023132708017216] 
			\draw    (247,132) .. controls (263,122) and (275,109.5) .. (264,86.5) ;
			
			% Text Node
			\draw (304,72.4) node [anchor=north west][inner sep=0.75pt]    {$h$};
			% Text Node
			\draw (347,86.4) node [anchor=north west][inner sep=0.75pt]    {$\mathbb{R}$};
		\end{tikzpicture}
		
		\caption{The height function on a genus 2 surface}
	\end{center}
	
	\noindent However, the maximum figure 8 $E_2$ does not have a submanifold $S$ containing it such that $h|_S$ takes a minimum on this figure 8. So as it stands, $h$ is not minimally degenerate though $E_1$ is minimally degenerate. If we perturb $h$ locally around $E_2$ so that its new maximum is achieved only at a single point $p$, then the new function is minimally degenerate.
\end{example}

\subsection{Comparing Kirwan's Original Definition to Definition \ref{mindeg}}
	
	Having seen some examples, it is worth pointing out that though Definition \ref{mindeg} is similar to one found in Kirwan's thesis \cite{Kirwan} (p. 65), there are a few important differences.
	
	Firstly, we focus on individual critical sets $C$ because we wish to later consider isolated sets $C \subset \Lambda \cap L$ that are contained in the intersection of Lagrangian submanifolds. Such a $C$ has no intrinsic reference to a smooth function but nonetheless, may display minimal degeneracy type properties such as admitting a submanifold $S$ with some nice properties. This will be made precise later. 
	
	Moreover, we make no assumptions about the normal bundle of $S$ and the relevant restrictions of the Hessian are positive semi-definite instead of positive definite. The first relaxing of the definition is simply because we don't need the assumptions but the second condition is quite crucial and will be explained in due time. There is also a third difference in definition: we don't require the critical set to be a finite union but instead, we require the compact sets to have isolating neighborhoods.
	
	This third difference is made for two reasons. The first is that we wish to avoid certain pathological compact sets such as $A = \{1/n: n \in \N\} \cup \{0\} \subset \R$ or the ``Hawaiian earring'' 
	\[\mathcal{H}=\bigcup_{n=1}^\infty \left\{(x,y) \in \R^2 \mid \left(x-\frac {1}{n} \right)^2 + y^2 = \left(\frac {1}{n}\right)^2\right\}.\]
	
	\noindent Indeed, if $A$ were to arise as the minimum set of some smooth function, then somewhere between each $\frac{1}{n}$ and $\frac{1}{n+1}$ would be a maximum. These maxima would converge towards 0 and hence, $A$ is not isolated. A similar argument also shows that $H$ is not isolated. In point of fact, Kirwan's definition also prohibits such closed sets.
	
	However, finite unions are not general enough in Floer theory; one often encounters infinite unions of Reeb orbits. So in order to continue to prohibit pathological closed sets but also expand the definition to allow for infinite unions, we've opted to use isolated closed sets in our definition.
	
	\subsection{Generalization of Minimal Degeneracy}
	
	Since Lagrangian intersections are our main motivation, consider the following example. 
	
	\begin{example}
		
		Let $L_0$ be the zero section of $T^* \R \cong \R^2$ with standard symplectic form and $L_1$ be the graph of $df= 2x$. The linear symplectomorphisms on $\R^2$ can be thought of as elements of $SL(2,\R) \cong Sp(2,\R)$. One such example is the shearing map represented by 
		\[\begin{pmatrix}
			1 & -1 \\ 0 & 1
		\end{pmatrix},\]
		
		\noindent which sends $L_1$ to the graph of $df=-2x$ while fixing $L_0$. Hence, before applying the linear symplectomorphism, $L_1$ is described by the Morse function $f(x) = x^2$ and afterwards, described by $-f$.
	\end{example}
	
	This example illustrates that even in the case of transverse intersections, a choice of Weinstein neighborhood affects whether the intersection behaves like a minimum or a maximum. Indeed, one may construct examples of Lagrangians $L_0$ and $L_1$ intersecting transversally at a point $p$ and then choose a Weinstein neighborhood so that $L_1$ is the graph of $df$ where $f$ is a Morse function with a critical point at $p$ of arbitrary index. Hence, in the Lagrangian setting, any attempt to define $C \subset L_0 \cap L_1$ to be minimally degenerate should not rely on a Weinstein neighborhood since, depending on the neighborhood, $C$ may or may not be minimal. Thus, this motivates us to give a few definitions that generalize Definition \ref{mindeg}.
	
	\begin{definition}
		Let $f: M \to \R$ be a smooth function and let $C$ be an isolated family of critical points of $f$. Let $S \subset M$ be a submanifold containing $C$. We say that $f$ is \textbf{flattened degenerate along} $(C,S)$ if:
		\begin{enumerate}
			\item $f|_S$ is minimal along $C$.
			\item $\ker \text{Hess}_x f = T_x S$ for all $x \in C$.
		\end{enumerate}
		If the critical points of $f$ form a disjoint union $\bigsqcup C$ where each $C$ has a submanifold $S_C$ such that $f$ is flattened degenerate along $(C,S_C)$, we will say that $f$ is flattened degenerate.
	\end{definition}

Observe that if we take a smaller submanifold $S' \subset S$ that still contains $C$, then $f$ is also flattened degenerate along $(C,S')$. Now, for an example:
	
\begin{example} \label{flatex}
	Consider the genus 2 surface $M$ from Example \ref{qmdex}, embedded into $\R^3$. We may deform the embedding by ``flattening'' the bottom of the surface so that the height function has a set of minima $C$ that looks like a ``mask'' (see figure below) and is a codim 0 submanifold-with-boundary. If we pick local coordinates, along $C$, $h$ is constant and hence its 2nd derivatives and hence, Hessian, is trivial along $C$. Thus, the height function is flattened degenerate along $(C,M)$. It's obvious here but worth pointing out that the submanifold $S$ we chose is $M$ itself).
	
	\begin{center}
\tikzset{every picture/.style={line width=0.75pt}} %set default line width to 0.75pt        

\begin{tikzpicture}[x=0.75pt,y=0.75pt,yscale=-1,xscale=1]
%uncomment if require: \path (0,300); %set diagram left start at 0, and has height of 300

%Shape: Arc [id:dp13077436870423775] 
\draw  [draw opacity=0] (111.26,120.2) .. controls (108.06,121.36) and (104.6,122) .. (101,122) .. controls (84.43,122) and (71,108.57) .. (71,92) .. controls (71,75.43) and (84.43,62) .. (101,62) .. controls (104.6,62) and (108.06,62.64) .. (111.26,63.8) -- (101,92) -- cycle ; \draw   (111.26,120.2) .. controls (108.06,121.36) and (104.6,122) .. (101,122) .. controls (84.43,122) and (71,108.57) .. (71,92) .. controls (71,75.43) and (84.43,62) .. (101,62) .. controls (104.6,62) and (108.06,62.64) .. (111.26,63.8) ;  
%Shape: Arc [id:dp9677329886275889] 
\draw  [draw opacity=0] (149.74,63.8) .. controls (152.94,62.64) and (156.4,62) .. (160,62) .. controls (176.57,62) and (190,75.43) .. (190,92) .. controls (190,108.57) and (176.57,122) .. (160,122) .. controls (156.4,122) and (152.94,121.36) .. (149.74,120.2) -- (160,92) -- cycle ; \draw   (149.74,63.8) .. controls (152.94,62.64) and (156.4,62) .. (160,62) .. controls (176.57,62) and (190,75.43) .. (190,92) .. controls (190,108.57) and (176.57,122) .. (160,122) .. controls (156.4,122) and (152.94,121.36) .. (149.74,120.2) ;  
%Curve Lines [id:da7162468647049123] 
\draw    (111.26,63.8) .. controls (123,71.5) and (139,70.5) .. (149.74,63.8) ;
%Curve Lines [id:da32121411931489097] 
\draw    (111.26,120.2) .. controls (124,114.5) and (138,115.5) .. (149.74,120.2) ;
%Shape: Circle [id:dp9168157135357176] 
\draw   (87.25,92) .. controls (87.25,84.41) and (93.41,78.25) .. (101,78.25) .. controls (108.59,78.25) and (114.75,84.41) .. (114.75,92) .. controls (114.75,99.59) and (108.59,105.75) .. (101,105.75) .. controls (93.41,105.75) and (87.25,99.59) .. (87.25,92) -- cycle ;
%Shape: Circle [id:dp9913048911313467] 
\draw   (146.25,92) .. controls (146.25,84.41) and (152.41,78.25) .. (160,78.25) .. controls (167.59,78.25) and (173.75,84.41) .. (173.75,92) .. controls (173.75,99.59) and (167.59,105.75) .. (160,105.75) .. controls (152.41,105.75) and (146.25,99.59) .. (146.25,92)  -- cycle ;
\end{tikzpicture}
\end{center}
	
\end{example}

As this example illustrates, one way to obtain flattened degenerate functions is to perform this flattening process. The following lemma demonstrates the usefulness of this definition and also that the flattening procedure can always be applied to flattened degenerate functions along $(C,S)$ so that we get a new submanifold-with-boundary $\Sigma$. Moreover, $C$ will be homotopy equivalent to $\Sigma$. In the example above, there is no need to undergo this procedure since $C$ itself is already a submanifold-with-boundary.
	
	\begin{lemma} \label{fperturb}
		If $f$ is flattened degenerate along $(C,S)$, then there is a codim 0 submanifold $\Sigma \subset S$ with boundary which is an isolated critical set of $\check{f}$ containing $C$, a function that is $C^1$ close to $f$. Moreover, $C \hookrightarrow\Sigma$ is a homotopy equivalence. 
	\end{lemma}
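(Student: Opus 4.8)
The plan is to produce $\check f$ by \emph{flattening} $f$ to its minimum value on a sublevel thickening of $C$ inside $S$, while preserving the nondegenerate behaviour transverse to $S$, so that this thickening becomes precisely the (isolated) critical set. Throughout I work in a fixed isolating neighbourhood of $C$ and normalize $f \equiv 0$ on $C$. First I would set up Fermi coordinates $(s,n)$ along $S$, writing points near $S$ as pairs with $S = \{n = 0\}$ and $n$ a normal coordinate. Writing $g := f|_S$, conditions (1) and (2) give the local model
\[ f(s,n) = g(s) + \omega(s)\cdot n + \tfrac12\langle H_s n, n\rangle + O(|n|^3), \]
where, along $C$, one has $g \ge 0$ with $g^{-1}(0) = C$, $dg = 0$ and $\mathrm{Hess}\,g = 0$ (so $g$ is flat to second order along $C$), the normal covector $\omega$ vanishes, and the normal Hessian $H_s$ is nondegenerate. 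Shrinking the neighbourhood, $H_s$ stays nondegenerate and $\omega$, $dg$ stay as small as desired.

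Next I would choose, by Sard's theorem, a small regular value $\epsilon > 0$ of $g$ and set $\Sigma := \{x \in S : g(x) \le \epsilon\}$ inside the neighbourhood; this is a codimension-$0$ submanifold of $S$ with boundary $g^{-1}(\epsilon)$ and it contains $C$. To build $\check f$, fix a smooth profile $\lambda : \R \to \R$ that vanishes on $(-\infty,\epsilon]$, satisfies $\lambda(t) = t$ for $t \ge \epsilon'$ with $\epsilon'$ a slightly larger threshold, and has $\lambda' \ge 0$, and define near $C$
\[ \check f(s,n) := \lambda\big(g(s)\big) + \tfrac12\langle H_s n, n\rangle, \]
interpolating back to $f$ away from $C$ by a cutoff supported in the chart. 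On $\Sigma$ we have $n = 0$ and $g \le \epsilon$, so $\lambda(g) \equiv 0$ and $d\check f = \lambda'(g)\,dg + \tfrac12 d\langle H_s n, n\rangle$ vanishes identically; hence $\check f$ is critical along all of $\Sigma$. Conversely, nondegeneracy of $H_s$ forces any nearby critical point onto $\{n = 0\}$, and there $d\check f = \lambda'(g)\,dg = 0$ forces either $g \le \epsilon$ or $dg = 0$; so the critical set is exactly $\Sigma$, provided $g$ has no critical points in $\{\epsilon < g \le \epsilon'\}$.

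The $C^1$-closeness of $\check f$ to $f$ is where the flatness pays off: the modification equals $\big(\lambda(g) - g\big) - \omega(s)\cdot n + O(|n|^3)$ up to the cutoff, and $d(\lambda(g) - g) = (\lambda'(g) - 1)\,dg$ with $dg$ uniformly small on $\{g \le \epsilon'\}$, while $\omega(s)\cdot n$ and its differential are controlled by $|\omega| = O(\mathrm{dist}(\cdot,C))$; shrinking $\epsilon'$ and the chart makes all of this $C^1$-small, and the cutoff terms are absorbed in the same way. For the homotopy equivalence, the downward gradient flow of $g$ has no rest points in $\{0 < g \le \epsilon\}$, so it deformation retracts $\Sigma = \{g \le \epsilon\}$ onto $\{g = 0\} = C$, giving that $C \hookrightarrow \Sigma$ is a homotopy equivalence.

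The main obstacle is the repeatedly-invoked claim that $g = f|_S$ has no critical points other than $C$ in a neighbourhood of $C$: this is what makes $\epsilon$ a clean regular value, makes $\Sigma$ an \emph{isolated} critical set of $\check f$, and makes the retraction work. When $S = M$ (as in the genus-$2$ ``mask'' example) there are no normal directions and $\mathrm{crit}(g) = \mathrm{crit}(f) = C$ is immediate from the isolating neighbourhood. In general, however, a critical point of $g$ at a small positive value need not descend from a critical point of $f$ once $\omega \not\equiv 0$, so ruling out critical points of $f|_S$ accumulating on $C$ is the delicate step; I expect to establish it from the isolating-neighbourhood hypothesis together with the transverse normal form (a parametrized Morse/splitting lemma), which is also the technical input needed to justify the local model in the first place.
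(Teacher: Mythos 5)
Your construction follows essentially the same route as the paper's proof (flatten $g = f|_S$ by post-composition with a profile function, take $\Sigma$ to be a sublevel set of $g$, retract $\Sigma$ onto $C$ by the gradient flow), and your handling of the normal directions, keeping the nondegenerate quadratic term so that critical points of $\check f$ are forced onto $S$, is if anything cleaner than the paper's Case 2, whose two alternatives produce critical sets containing the whole normal slab over $\Sigma$ rather than $\Sigma$ itself. However, the obstacle you flag at the end is a genuine gap, and it cannot be closed in the form you propose, because the statement you hope to prove is false: when $\dim S < \dim M$, isolation of $C$ in $\mathrm{crit}(f)$ does \emph{not} prevent critical points of $f|_S$ from accumulating on $C$. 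Take $M = \R^2$, $S = \{z=0\}$, $C = \{(0,0)\}$, and
\[ f(x,z) \;=\; h(x) + \tfrac12\bigl(z + a(x)\bigr)^2, \qquad h(x) = e^{-2/x^2}, \quad a(x) = e^{-1/(2x^2)}\sin(1/x). \]
All hypotheses hold: $f|_S = h + a^2/2$ is nonnegative with zero set $\{0\}$; both $h$ and $a$ are flat at $0$, so $\mathrm{Hess}_0 f = \mathrm{diag}(0,1)$ has kernel $T_0S$; and $\mathrm{crit}(f) = \{(0,0)\}$ is isolated, since critical points of $f$ lie on the graph $z = -a(x)$, where $df$ reduces to $h'(x)\,dx$ and $h' \neq 0$ off $0$. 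Yet $(f|_S)' = h' + aa'$ vanishes along a sequence accumulating at $0$: near each $x = 1/(k\pi)$ the term $aa'$ changes sign with magnitude of order $x^{-2}e^{-1/x^2}$, which dominates $h' = 4x^{-3}e^{-2/x^2}$. Hence for every choice of $\epsilon$ your $\Sigma = \{g \le \epsilon\}$ contains rest points of the gradient flow other than $C$, and the retraction argument collapses; in two-dimensional versions of this example (small bumps riding on a flat radial well, absorbed into the cross term $a$) one can further arrange that \emph{every} sufficiently small sublevel component containing $C$ has nontrivial fundamental group, so no choice of regular value makes $C \hookrightarrow \Sigma$ a homotopy equivalence. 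Be aware that the paper's own proof glosses over exactly this point: its Case 2 applies Case 1 to $f|_S$, which tacitly presupposes that $C$ is isolated in $\mathrm{crit}(f|_S)$.

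The repair uses precisely the parametrized splitting lemma you mention, but aimed at a different conclusion. Since $\ker \mathrm{Hess}_x f = T_x S$ forces the normal-normal block of the Hessian to be nondegenerate along $C$, the fiberwise critical locus $S' := \{\partial_n f = 0\}$ of a tubular neighborhood of $S$ is a submanifold tangent to $S$ along $C$, and Lyapunov--Schmidt reduction gives $\mathrm{crit}(f|_{S'}) = \mathrm{crit}(f)$ near $C$. Thus the isolation hypothesis, which lives on $M$, transfers to the reduced function $f|_{S'}$, which is exactly what fails for $f|_S$. In splitting coordinates adapted to $S'$ the cross term $\omega$ vanishes identically, and your construction then runs verbatim with $S'$ in place of $S$: in the example above, $S' = \{z = -a(x)\}$ and $f|_{S'} = h$, whose only critical point is the origin. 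One residual point to watch: $f|_{S'}$ need not be minimal along $C$ (it can be maximal, or of mixed sign), so the profile $\lambda$ should flatten a two-sided neighborhood of the critical value, and the retraction of $\Sigma$ onto $C$ must be adjusted accordingly.
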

	
	\begin{proof}
		\textbf{Case 1:} Suppose $\dim S = \dim M$. In this case, $f$ is minimal on $C$ and $\text{Hess}_x f = 0$ for all $x \in C$. Let $\delta > 0$ (a parameter we may adjust) and $\rho:\R \to \R$ be a smooth function such that $\rho(x) = 0$ for $x \leq \delta/2$, $\rho(x) = x$ for $x \geq \delta$, and $0 < \rho'(x) < 3$ for $x \in (\delta/2, \delta)$. 
		
		\begin{center}
			\tikzset{every picture/.style={line width=0.75pt}} %set default line width to 0.75pt        
			\begin{tikzpicture}[x=0.75pt,y=0.75pt,yscale=-1,xscale=1]
				%uncomment if require: \path (0,300); %set diagram left start at 0, and has height of 300
				
				%Straight Lines [id:da807212626281137] 
				\draw    (160,50) -- (160,240) ;
				%Straight Lines [id:da764690831277993] 
				\draw    (160,240) -- (350,240) ;
				%Straight Lines [id:da3303006053021207] 
				\draw [line width=1.5]    (260,140) -- (350,50) ;
				%Curve Lines [id:da9655835135475073] 
				\draw [line width=1.5]    (210,240) .. controls (247,239.83) and (242.33,157.5) .. (260,140) ;
				%Straight Lines [id:da5770207807337548] 
				\draw [line width=1.5]    (160,240) -- (210,240) ;
				%Straight Lines [id:da5526470048705661] 
				\draw    (260,230) -- (260,250) ;
				%Straight Lines [id:da06448256127528107] 
				\draw    (210,230) -- (210,250) ;
				
				% Text Node
				\draw (314,92.4) node [anchor=north west][inner sep=0.75pt]    {$\id$};
				% Text Node
				\draw (203,252.4) node [anchor=north west][inner sep=0.75pt]    {$\frac{\delta}{2}$};
				% Text Node
				\draw (231,162.4) node [anchor=north west][inner sep=0.75pt]    {$\rho$};
				% Text Node
				\draw (255,252.4) node [anchor=north west][inner sep=0.75pt]    {$\delta$};
				
			\end{tikzpicture}
		\end{center}	
		
		Then, let $\check{f} = \rho \circ f$. Note that $\check{f}^{-1}(0) = f^{-1}([0,\delta/2])$ and by Sard's theorem, for generic $\delta$, $f^{-1}(\delta)$ is a submanifold. Hence, $\Sigma := \check{f}^{-1}(0)$ is a submanifold with boundary and clearly a critical set of $\check{f}$. Since $C$ is isolated by some open set $U$, we may choose $\delta$ small enough such that $\Sigma \subset U$. We may also choose $\delta$ small enough so that the vector field $-\nabla \check{f}$ has complete flow. This gives the desired homotopy inverse to $C \hookrightarrow \Sigma$ since the only points on $\Sigma$ that are stationary are points of $C$ and in the limit, the flow of the gradient of any point goes to a point in $C$ . The bounds on the 1st derivative plus the fact that $\text{Hess}_x f = 0$ for $x\in C$ makes $\check{f}$ $C^1$-close to $f$. \\
		
		\noindent \textbf{Case 2:} $\dim S < \dim M$. We work in a tubular neighborhood of $S$ which is diffeomorphic to the normal bundle of $S$: $\nu:NS \to S$. Then, of course $S$ is a codim 0 submanifold of $S$ and $f|_S$ has $C$ as a minimum and also satisfies the Hessian condition. We may apply Case 1 to this and obtain a codim 0 submanifold $\Sigma \subset S$ which is the critical set of $\rho \circ f|_S$.
		
		Next, we wish to pullback $\rho \circ f|_S$ to $M$ in some way. There are two possibilities: we may take $\nu^* (\rho \circ f)$ which is constant on the fibers. Hence, the critical set of this function is $NS|_\Sigma$, not merely $\Sigma$. However, $NS|_\Sigma$ is homotopy equivalent to $\Sigma$ due to the contractible fibers. We may then extend this function from the tubular neighborhood to all of $M$ via bump functions.
		
		Alternatively, we fix a complete metric on $NS$ and let $r(x,v) = |v|$ be the radial function with respect to this metric. Then $(1+r^4)(\rho \circ f|_S)$ is a function on $NS$ which has $\Sigma$ as an isolated critical set. We can then extend this function to all of $M$ once again, using bump functions.
	\end{proof}
	
	We think of the process in which $\Sigma$ is obtained as a sort of ``flattening'' process because $\rho$ is constant on $[0,\delta/2]$. We will often refer to a $\Sigma$ obtained in this way as a \textbf{thickening} of $C$. We now give another definition.
	
	\begin{definition} \label{QMD}
		We say that $f$ is \textbf{quasi-minimally degenerate} (QMD) along $C$ if there exists a smooth function $\tau \geq 0$ and a submanifold $S$ so that:
		\begin{enumerate}
			\item $\tau^{-1}(0) = C$.
			\item $\ker \text{Hess}_x\tau$ is transverse to $T_x S$ for each $x \in C$.
			\item $f - \tau$ is flattened degenerate along $(C,S)$.
		\end{enumerate}
		If the critical points of $f$ form a disjoint union $\bigsqcup C$ such that $f$ is QMD along each $C$, then we say that $f$ is quasi-minimally degenerate.
	\end{definition}

	\noindent \textbf{Remark:} Here are a few immediate and important observations.
	\begin{itemize}
		\item The height function in Example \ref{qmdex} is QMD along the set $C$ which is a minimal figure 8. This is because we can find a $\tau \geq 0$ with $\tau^{-1}(0)$ (this is always doable for any closed set $C$ using partition of unity). Then, using $S=M$, the transversality condition is automatically satisfied. Lastly, we can arrange $\tau$ to have these properties and in addition, be such that $f-\tau$ is flattened degenerate. Intuitively, the ``flattening'' procedure of Example \ref{flatex} uses such a $\tau$.
	
		\item Since $\tau \geq 0$ and $C$ is a set of minima for $\tau$, then $(d\tau)|_C = 0$.
		
		\item Note that if we take a smaller submanifold $S'$ inside of $S$ which still contains $C$, there's no issue since all the properties of $\tau$ still hold on $S'$. In this way, we may elect to ``shrink'' $S$ while maintaining the relevant properties. Put another way, $S$ should not be viewed as part of the data but rather the germ of submanifolds containing $C$ is what's essential. Here is an immediate consequence. Note that since $C$ is a minimum for $\tau$, then for each $x \in C$, $\text{Hess}_x \tau \geq 0$ on $T_x S$ and the same holds for points near $C$. Hence, by choosing a small enough $S$, we can assume that for any $x \in S$, $\text{Hess}_x \tau \geq 0$ on $T_x S$ rather than only those $x \in C$.
		
		\item Similarly, for small enough $S$, points $x \in S$ will also be such that $\ker \text{Hess}_x \tau$ is transverse to $T_x S$. The advantage to defining quasi-minimal degeneracy in this way is that we don't need to invoke a metric since $d\tau$ vanishes on $C$. Off of $C$, the Hessian requires a choice of metric.
		
		\item Note that $\frac{d}{ds} (d(f-s\tau)_x) = d\tau_x$ for each $x$. If $x \in C$, then $d\tau_x = 0$ and hence, for $x \in C$, $\frac{d}{ds} (d(f-s\tau)_x) = 0$; i.e. $d(f-s\tau)_x$ is independent of $s$. In particular, set $s=0$ and thus, $d(f-s\tau)_x = df_x$. If $x \notin C$ but is close to $C$, then $d\tau_x \neq 0$ because $\tau^{-1}(0)=C$. So $\frac{d}{ds} (d(f-s\tau)_x) \neq 0$. By being very near $C$, we can assume this means that $d(f-s\tau)_x \neq 0$ as well for any $s \in [0,1]$. Hence, we have a isotopy between a QMD function $f$ and a flattened degenerate function $f-\tau$ where during the isotopy, no new critical points are introduced near $C$ nor are any critical points lost.
	\end{itemize}
	
	A less immediate observation is that, by adding one more condition, we have a function that is minimally degenerate in the spirit of Kirwan's definition. More precisely:	
	
	\begin{lemma}
		If, in addition to the properties listed in \ref{QMD}, $\text{Hess}_x(f-\tau)$ has no positive eigenvalues, then $C$ is minimally degenerate in the sense of Definition \ref{mindeg}. Conversely, if $C$ is a minimally degenerate set, then $f$ is QMD along $C$ and $\text{Hess}_x (f-\tau)$ has no positive eigenvalues.
	\end{lemma}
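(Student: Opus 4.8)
The plan is to prove the two implications separately, using throughout that at a point $x\in C$ each of $f$, $\tau$, and $f-\tau$ is critical (on $C$ we have $df=d\tau=0$ by the remarks), so all three Hessians are well defined and
\[\text{Hess}_x f = \text{Hess}_x(f-\tau) + \text{Hess}_x \tau.\]
This pointwise identity, together with the signature bookkeeping it permits, is the engine of both directions. Write $k=\dim S$, $n=\dim M$, and abbreviate $H:=\text{Hess}_x f$, $H_0:=\text{Hess}_x(f-\tau)$, $H_\tau:=\text{Hess}_x\tau$, letting $n_+,n_0,n_-$ denote the numbers of positive, zero, and negative eigenvalues of $H$.

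For the forward implication, assume the QMD data together with $H_0\le 0$, and verify the three clauses of Definition \ref{mindeg}. Clause 1 is essentially recorded already: $f-\tau$ is flattened degenerate so $C$ is its isolated critical set, the equalities $f=f-\tau$ and $df=d(f-\tau)$ on $C$ make $C$ critical for $f$ with $f|_C$ constant, and the isotopy $f-s\tau$ of the last remark (which creates and destroys no critical points near $C$) transports the isolating neighborhood from $f-\tau$ to $f$. Clause 2 is a positivity argument: on $S$ we have $f|_S=(f-\tau)|_S+\tau|_S$, where $(f-\tau)|_S\ge c$ near $C$ by minimality of the flattened function ($c:=f|_C$) and $\tau|_S\ge 0$ with zero set $C$, so $f|_S\ge c$ with equality exactly on $C$. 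The content is Clause 3. Here I would restrict to $W:=\ker H_\tau$: on $W$ we have $H=H_0$, and since $H_0\le 0$ is semidefinite with $\ker H_0=T_xS$, the null space of $H_0|_W$ is $W\cap T_xS$. The transversality hypothesis $\ker H_\tau + T_xS=T_xM$ gives $\dim(W\cap T_xS)=\dim W+k-n$, so $H_0|_W$ is negative definite on a subspace of dimension $\dim W-(\dim W+k-n)=n-k$. Hence $H$ has at least $n-k$ negative directions, every positive-semidefinite subspace has dimension at most $k$, and $T_xS$—on which $H=H_\tau\ge 0$—is positive semidefinite of dimension $k$, so it is maximal. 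The hypothesis $H_0\le 0$ is used precisely to make $H=H_0$ on $W$ carry the full index $n-k$.

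For the converse, assume $C$ is minimally degenerate with submanifold $S$. Clause 2 gives $H|_{T_xS}\ge 0$ and Clause 3 pins the signature: the maximal positive-semidefinite dimension equals $\dim T_xS=k$, so $n_-=n-k$ and $n_++n_0=k$. The naive attempt to reuse $S$ and solve for $H_\tau\ge 0$ with $\ker(H-H_\tau)=T_xS$ can be obstructed—if $H|_{T_xS}$ has a null vector pairing nontrivially with the normal directions, no nonnegative $H_\tau$ exists—so instead I would build a new submanifold $S'$ through $C$ with $T_xS'=E_+(x)\oplus E_0(x)$ the nonnegative eigenspace of $H$, and take $H_\tau$ to be the positive part of $H$ (supported on $E_+$). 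Then $H-H_\tau=-(\text{negative part of }H)$ is negative semidefinite with kernel exactly $E_+\oplus E_0=T_xS'$, yielding both the no-positive-eigenvalue conclusion and $\ker\text{Hess}_x(f-\tau)=T_xS'$, while $\ker H_\tau=E_0\oplus E_-$ is transverse to $T_xS'$. To realize this recipe globally I would work in a tubular neighborhood, integrate the eigenspace distribution along $C$ to define $S'$, and produce a smooth $\tau\ge 0$ with $\tau^{-1}(0)=C$ and the prescribed Hessian along $C$ by bump functions and a fiber-quadratic model, as in Lemma \ref{fperturb}.

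The main obstacle is exactly this construction in the converse. Two points need genuine care. First, the splitting $T_xM=(E_+\oplus E_0)\oplus E_-$ must vary smoothly along $C$ so that $S'$ is a bona fide submanifold: although $n_0$ may jump along $C$ (as in Example \ref{qmdex}), the sum $n_++n_0=\dim S$ is constant, and one needs the negative eigenvalues bounded away from $0$ so that the nonnegative eigenspace forms a smooth bundle. Second, and more seriously, flattened degeneracy of $f-\tau$ requires $(f-\tau)|_{S'}$ to genuinely attain its minimum along $C$, yet by construction its Hessian vanishes on $T_xS'$; minimality is thus a higher-order condition, and $\tau$ must be arranged so that $\tau|_{S'}\le f|_{S'}-c$ near $C$ while still matching the positive part of $H$ to second order. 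Balancing these competing second- and higher-order requirements—rather than the signature count, which is routine—is where the real work lies.
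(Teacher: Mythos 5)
Your forward implication is correct and follows essentially the paper's own argument: the paper chooses a complement $V \subset \ker \text{Hess}_x\tau$ of $T_xS$ and shows $\text{Hess}_x f$ is negative definite on $V$ and nonnegative on $T_xS$, which is the same signature count you run inside $W=\ker\text{Hess}_x\tau$; if anything, your verification of clause 2 via $f|_S=(f-\tau)|_S+\tau|_S$ is cleaner than the paper's (which asserts $d(f-\tau)|_S=0$ on a germ of $S$, an inference that is not needed for the conclusion).

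The genuine gap is in the converse, and it is the one you flag yourself. You never actually construct $\tau$ and $S'$: you prescribe $\text{Hess}\,\tau$ along $C$ (the positive part of $\text{Hess}_x f$), posit a submanifold $S'$ tangent to $E_+\oplus E_0$, and then list as unresolved (i) the smoothness of that eigenbundle along $C$ and (ii) the fact that minimality of $(f-\tau)|_{S'}$ along $C$ is a higher-order condition that your second-order prescription cannot control, ending with the admission that this is where the real work lies. Announcing the hard steps is not carrying them out, and point (ii) is serious: the minimality you are handed by Definition \ref{mindeg} concerns the original $S$, and nothing in your proposal transfers it to $S'$. Along the degenerate directions $E_0$ (which, note, always lie in $T_xS$ by maximality), whether $(f-\tau)|_{S'}$ attains a minimum on $C$ is decided by third- and fourth-order terms of $f$ along a manifold you have pinned down only to first order. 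So the converse is not proved.

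For comparison, the paper's proof never changes the submanifold, and that choice dissolves both of your obstacles at once: keep $S$, normalize $\min f|_S=0$, and set $\tau = r^4+\pi^*(f|_S)$ on a tubular neighborhood of $S$, extended by a bump function. Then $\tau\ge 0$ and $\tau^{-1}(0)=C$ are immediate, the normal bundle lies in $\ker\text{Hess}\,\tau$ (giving the transversality condition), and---the key trick---$(f-\tau)|_S\equiv 0$, so the minimality clause of flattened degeneracy holds trivially; no spectral decomposition and no higher-order analysis ever enter. To be fair to you, your cross-term obstruction is real: the paper's final step (negative definiteness along $NS$, hence $\ker\text{Hess}(f-\tau)=T_xS$) tacitly assumes the mixed Hessian terms pairing $T_xS$ with $N_xS$ vanish. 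Indeed, for $f=x^4+xy-y^2$ with $S$ the $x$-axis and $C=\{0\}$ (minimally degenerate per Definition \ref{mindeg}), no admissible $\tau$ exists for that $S$ at all: flattened degeneracy along $(C,S)$ would force $\text{Hess}_0\tau=\left(\begin{smallmatrix}0&1\\1&\ast\end{smallmatrix}\right)$, which is never positive semidefinite, although positive semidefiniteness is forced by $\tau\ge 0=\tau(0)$. So a change of submanifold of the kind you propose is sometimes genuinely necessary, and the paper's write-up does not engage with this; but your proposal replaces that one unaddressed point with a construction whose two hardest steps are left open, so as it stands it does not establish the statement.
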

	
	\begin{proof}
		$(\Longrightarrow)$ $f-\tau$ is flattened degenerate which means $(f-\tau)|_S$ has $C$ as minimum and $\ker \text{Hess}_x(f-\tau) = T_x S$ for $x \in C$. This means that along $C$, the 1st order derivatives of $f-\tau$ restricted to the directions tangent to $S$ are not varying. In other words, $f-\tau$ has $S$ as a critical set: $d(f-\tau)_x = 0$ for $x \in S$ (if necessary, we shrink $S$, treating it as a germ). Hence, $df_x = d\tau_x$ for $x \in S$. Since $\tau$ has $C$ as minimum, $f|_S$ has $C$ as minimum.	
		
		Let $x \in C$ and consider $\text{Hess}_x\, f$. Let $V \subset \ker \text{Hess}_x \,\tau$ be a subspace transverse to $T_x S$ satisfying $V \cap T_x S = 0$ (i.e. it is of complementary dimension). Now, $\text{Hess}_x(f-\tau) = \text{Hess}_x \,f - \text{Hess}_x \, \tau$ has no positive eigenvalues (the additional property mentioned above). And restricting $\text{Hess}_x \, \tau$ to $V$ (which is in its own kernel) gives a trivial quadratic form. Hence, $\text{Hess}_x \,f$ restricted to $V$ is negative definite. 
		
		Also, since $\ker \text{Hess}_x(f - \tau) = T_x S$, we have that $\text{Hess}_x \,f$ and $\text{Hess}_x \, \tau$ agree when restricted to $T_x S$. $\tau \geq 0$ so $\text{Hess}_x \, \tau$ is non-negative definite on $T_x S$ and hence, so is $\text{Hess}_x \, f$. Together, these two facts show that $T_x S$ is the maximal subspace for which $\text{Hess}_x\,f$ is non-negative definite. \\
		
		\noindent $(\Longleftarrow)$ Conversely suppose that $C$ is a connected, isolated minimally degenerate critical locus of $f$ in the sense of Kirwan's thesis (generalized slightly in this paper) and let $S$ be the corresponding ``minimizing submanifold.'' We construct the auxiliary function $\tau$ as follows: By using a complete metric on $M$, we can identify a neighborhood of $S$ with a tubular neighborhood $U \subset NS$ of its normal bundle $NS$. Let $r : U \to [0,\infty)$ be the radial coordinate for this tubular neighborhood; i.e. $(x,v) \in U$ is mapped to the norm $|v| \in [0,\infty)$.  Let $\pi : U \to S$ be the projection map. We define $\tau = r^4 + \pi^*(f|_S)$ on $U$ and then use a bump function to extend $\tau$ to the whole of $M$.
		
		Note that $\tau^{-1}(0) \subset S$ because if we have $(x,v) \in U$ where $v \neq 0$, then $|v|^4 > 0$. Since $f|_S$ has $C$ as its minimum (we can assume it takes values 0), we conclude that $C = \tau^{-1}(0)$. This tells us that $d\tau|_C = 0$, thanks to $C$ being a set of critical points of $f$. Also, when restricted to $S$, the $r^4$ part vanishes and $\pi$ is trivial. So then, $d(f-\tau)|_S = 0$ because $f-\tau$ vanishes along $S$. Moreover, $S$ is the minimum for $f-\tau$ and so $f-\tau$ is negative definite along $NS$. This means that $\ker \text{Hess}(f-\tau) = T_x S$.
		
		Lastly, when restricted to $S$, $r^4$ vanishes. So for a point $x \in S$, we only need to consider the $\pi^*(f|_S)$ piece of $\tau$. But along a fiber of $U$, this is constant and hence the Hessian of $\pi^*(f|_S)$ vanishes on $NS$. This means that $NS \subset \ker \text{Hess}\, \tau$ which implies that this kernel is transverse to $T_x S$.
	\end{proof}
	
	This lemma shows us that a function being minimally degenerate is equivalent to it having a non-negative function $\tau$ with some properties, the essential one being that $f-\tau$ is flattened degenerate. We will shortly see the usefulness of this notion when studying Lagrangian intersections.
	
	\section{Lagrangian Quasi-Minimal Degeneracy}
	
	Similar to above, we will give two definitions concerning the intersection of any pair of Lagrangians.
	
	\begin{definition}
		Let $\Lambda, L \subset M$ be two Lagrangian submanifolds of a symplectic manifold $(M,\omega)$ and $C \subset \Lambda \cap L$. We say that $C$ is \textbf{flattened degenerate along a submanifold} $S \subset \Lambda$ with respect to $\Lambda, L$ if:
		\begin{enumerate}
			\item $T_x S = T_x \Lambda \cap T_x L$ for each $x \in C$.
			\item There exists a time dependent Hamiltonian $H_t$ whose derivative and Hessian vanish along $C$ and satisfies $\frac{d}{dt}(H_t) \geq 0$.
			\item $\phi^H_1(S) \subset L$.
		\end{enumerate}
	\end{definition}
	
	\noindent \textbf{Remark:} Here are some important points. 
	\begin{itemize}
		\item As promised, this definition is intrinsic in the sense that it does not depend on a choice of Weinstein neighborhood. 
		
		\item The time-dependence of $H$ is natural in symplectic geometry and gives a more flexible definition than requiring an autonomous Hamiltonian. 
		
		\item Much in the case of functions, the submanifold $S$ is best thought of as a germ since we may ``shrink'' the submanifold to some $S'$ and use the same $H$ without modification since it has all the same properties on the subset $S' \subset S$. Therefore, some of the remarks we made for functions also applies here.
	\end{itemize}
	
	\begin{theorem} \label{perturb}
		Suppose that $C$ is flattened degenerate along $S$ with respect to $\Lambda,L$. There exists a $C^1$-close family of Lagrangians $Q_s$ with $Q_0 =L$ realized by a Hamiltonian isotopy such that:
		\begin{enumerate}
			\item There exists a fixed open neighborhood $V$ of $C$ such that $Q_s \cap \Lambda \cap V$ is a compact,connected subset inside $V$ for every $s \in [0,1]$; i.e. there is a fixed isolating neighborhood.
			
			\item The intersection $Q_s \cap \Lambda$ near $C$ is homotopy equivalent to a codim 0 submanifold-with-boundary $\Sigma \subset S$ which contains $C$. Moreover, the inclusion $C \hookrightarrow \Sigma$ is a homotopy equivalence.
		\end{enumerate}
	\end{theorem}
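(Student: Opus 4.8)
The plan is to reduce Theorem \ref{perturb} to the Morse-theoretic flattening already established in Lemma \ref{fperturb}, by passing to a Weinstein neighborhood of $\Lambda$ and reading off a generating function from the Hamiltonian isotopy. Concretely, I would identify a neighborhood of $\Lambda$ in $M$ with a neighborhood of the zero section in $T^*\Lambda$, so that $\Lambda$ becomes the zero section. Because condition (2) of flattened degeneracy forces $dH_t$ and $\text{Hess}\,H_t$ to vanish along $C$, the time-$1$ map $\phi^H_1$ fixes $C$ pointwise and has $D\phi^H_1=\id$ along $C$; hence on a sufficiently small neighborhood of $C$ the map $\phi^H_1$ is $C^1$-close to the identity, and $\phi^H_1(\Lambda)$ is the graph of an exact $1$-form $dF$ for a function $F$ defined near $C$ with $\operatorname{Crit}(F)\supseteq C$. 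I would organize the argument into the same two cases as Lemma \ref{fperturb}, according to whether $\dim S=\dim\Lambda$ or $\dim S<\dim\Lambda$.

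In the top-dimensional case $S=\Lambda$ near $C$, condition (3) gives $\phi^H_1(\Lambda)\subseteq L$, and since both are Lagrangian of the same dimension they agree near $C$; thus $L=\operatorname{graph} dF$ and $\Lambda\cap L=\operatorname{Crit}(F)$. I would then check that $F$ is flattened degenerate along $(C,S)$ in the Morse sense: the vanishing of $\text{Hess}\,H_t$ along $C$ (equivalently condition (1), $T_xS=T_x\Lambda\cap T_xL$) yields $\ker\text{Hess}_x F=T_xS$, while the monotonicity $\tfrac{d}{dt}H_t\ge 0$ is what forces $F$ to attain a minimum precisely along $C$ (normalizing its value there to $0$). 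Granting this, Lemma \ref{fperturb} applied to $F$ produces $\check F=\rho\circ F$ and the thickening $\Sigma=\check F^{-1}(0)\subseteq S$, a codim-$0$ submanifold with boundary into which $C$ includes as a homotopy equivalence.

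To realize the perturbation as a Hamiltonian family I would set $Q_s:=\operatorname{graph} d\big((1-s)F+s\check F\big)$; each $Q_s$ is Hamiltonian isotopic to $Q_0=L=\operatorname{graph} dF$, and is $C^1$-close to it because $\check F$ is $C^1$-close to $F$ by Lemma \ref{fperturb}. A direct computation gives $d\big((1-s)F+s\check F\big)=\big((1-s)+s\,\rho'(F)\big)\,dF$, whose zero locus is $C$ for every $s<1$ and is exactly $\Sigma$ at $s=1$; in all cases the intersection $Q_s\cap\Lambda$ is compact, connected (as $C$, hence $\Sigma$, is connected), contained in a fixed isolating neighborhood $V$ chosen with $\Sigma\subset V$ and $F$ having no other critical points in $V$, and homotopy equivalent to $\Sigma$. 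This establishes (1) and (2). The case $\dim S<\dim\Lambda$ I would treat exactly as Case 2 of Lemma \ref{fperturb}: carry out the flattening for $F|_S$ to obtain $\Sigma\subset S$, then extend the generating function over the normal bundle $NS$ by a fiberwise positive term such as $r^4$, using condition (1) to guarantee that the normal directions are nondegenerate so that the perturbed intersection remains $\Sigma$.

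The main obstacle is the reduction step in the second paragraph: extracting a genuine generating function $F$ from the possibly time-dependent $H_t$ and showing it inherits a minimum along $C$. Since $H_t$ need not be autonomous nor of product form $\pi^*f$, the relevant $F$ is the action $F(x)=\int_0^1\big(\theta(X_{H_t})-H_t\big)(\phi^H_t(x))\,dt$ of the isotopy rather than $H_1$ itself; although $\tfrac{d}{dt}H_t\ge 0$ heuristically pushes values upward, the action also contains the position-dependent term $\theta(X_{H_t})$, so showing $F\ge 0$ with equality exactly on $C$ is the delicate point, and is precisely where monotonicity—rather than nonnegativity of a single Hamiltonian—is essential. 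A secondary difficulty, in the case $\dim S<\dim\Lambda$, is verifying that flattening only in the $S$-directions captures the entire local intersection up to homotopy, i.e. that no spurious intersection survives normal to $S$; this again rests on condition (1).
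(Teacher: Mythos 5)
Your Case 1 follows the paper's Case 1 almost exactly --- your interpolation $(1-s)F+s(\rho\circ F)$ is literally the paper's $\check f_s=\rho_s\circ f_1$ with $\rho_s(x)=(1-s)x+s\rho(x)$ --- but the step you yourself flag as ``the main obstacle'' is a genuine gap, not a deferred technicality: the whole theorem hinges on proving $F\ge 0$ with zero locus (locally) equal to $C$, and you do not prove it. The paper closes this with a reparametrization trick. Since each $\Lambda_t=\phi^H_t(\Lambda)$ is a graph near $C$, the restriction of $X_{H_t}$ to $\Lambda_t$ splits into a vertical component $Y_t$ and a component $Z_t$ tangent to $\Lambda_t$; projecting $Z_t$ to $\Lambda$ and lifting it back gives a vector field $\widehat Z_t$ vanishing on $C$, and composing with the flow $\psi_t$ of $-\widehat Z_t$ cancels exactly the position-dependent horizontal drift you worry about (your $\theta(X_{H_t})$ term), leaving $\tfrac{d}{dt}\bigl(f_t\circ\psi_t\bigr)\ge 0$ as a direct consequence of $\tfrac{d}{dt}H_t\ge 0$. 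Since $f_0\equiv 0$, this yields $f_1\ge 0$, and isolatedness of $C$ pins down the zero set. Without this lemma (or an equivalent argument) your reduction to Lemma \ref{fperturb} has no starting point, since flattening by $\rho$ is useless unless $C$ sits at the bottom of $F$.

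The more serious defect is your treatment of $\dim S<\dim\Lambda$. Your plan --- flatten $F|_S$ and extend over $NS$ by a fiberwise term like $r^4$, ``exactly as Case 2 of Lemma \ref{fperturb}'' --- presupposes that the local geometry of $L$ is recorded by a generating function over $\Lambda$. It is not: when $\dim S<n$, condition (3) only gives $\phi^H_1(S)\subset L$, and $\phi^H_1(S)$ has positive codimension in $L$, so $L\neq\phi^H_1(\Lambda)$ near $C$; worse, $L$ need not be transverse to the fibers of $T^*\Lambda$ at all (take $\Lambda$ the zero section of $T^*\R^2$ and $L$ the conormal bundle of a line, so that $T_xL$ contains vertical directions), hence $L$ admits no generating function over $\Lambda$ to flatten or extend. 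Perturbing the graph of $dF$ produces a family starting at $\phi^H_1(\Lambda)$ rather than at $L$, violating the requirement $Q_0=L$. The paper is explicit about this point, and its Case 2 is built around it: one passes to the symplectic reduction $p:T^*\Lambda|_S\to T^*S$ of the coisotropic $T^*\Lambda|_S$, where the projections $\widetilde L$ and $\widetilde\Lambda_t$ of $L$ and $\Lambda_t$ \emph{are} graphs of exact $1$-forms near $C$; the flattening is carried out there, and the flattened family is lifted back to $T^*\Lambda$ via a tubular neighborhood chosen so its fibers are tangent to $L$ along $C$, with Lemma \ref{isotopy} then upgrading the lifted isotopy to a Hamiltonian one (this last step is also what justifies, even in Case 1, your unproved assertion that the family of graphs is realized by a Hamiltonian isotopy). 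Your ``secondary difficulty'' about spurious intersections normal to $S$ is a symptom of this, but the root issue is that no graph-over-$\Lambda$ model for $L$ exists in Case 2, so the reduction to the Morse-theoretic lemma fails at the first step.
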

	
	\begin{proof}
		We will break this into two cases: $\dim S = n$ where $\dim M = 2n$ and $\dim S < n$. In reality, the may be treated as one case, as we will indicate. But hopefully, this presentation is more digestible. 
		
		We begin by choosing a Weinstein neighborhood $\mathcal{U}$ of $C \subset \Lambda$. Hence, we may view $\mathcal{U}$ as being symplectomorphic to a neighborhood of $C \subset \Lambda$ inside of $T^*\Lambda$. Let $\pi:T^*\Lambda \to \Lambda$ be the bundle map. Even if we did not have the condition above that $S \subset \Lambda$, viewing $S$ as a germ of a submanifold, we may pick a codim 0 submanifold $S' \subset S$ and project it, via $\pi$, to land in $\Lambda$. Our choice of $S'$ is made so that $\pi(S')$ is a submanifold (without boundary) of $\Lambda$. This shows that we do not really lose any generality by assuming $S \subset \Lambda$. \\
		
		\noindent \textbf{Case 1:} When $\dim S = n$, then for $x \in C$, $T_x S = T_x \Lambda \cap T_x L$ implies that $T_x \Lambda = T_x L$; i.e. $\Lambda$ and $L$ are tangent along $C$. This means that along $C$, $L$ is transverse to the fibers of $T^* \Lambda$ and thus, is a graph of some section. Since $L$ is Lagrangian, the section is a closed 1-form and hence, a locally exact 1-form. This means that near $C$, $L$ is the graph of some $df_1$. In fact, each Lagrangian in the family $\Lambda_t = \phi^H_t(\Lambda)$ is a graph of some $df_t$. Now, condition (2) of the definition of flattened degenerate tells us that $dH_t|_C = 0$ and for each $x \in C$, $\text{Hess}_x H_t = 0$, and $\frac{d}{dt}H_t \geq 0$. This implies that $C \subset \Lambda_t \cap \Lambda$ for each $t$.
		
		\begin{lemma}
			Let $\Lambda_t = \phi^H_t(\Lambda)$ be a family of Lagrangians in $T^* \Lambda$ where $H$ has the properties above. Then, $\Lambda_t$ is the graph of some $df_t$. There exists a time-dependent vector field $\widehat{Z}_t$ on $T^* \Lambda$ such that $\widehat{Z}_t$ vanishes on $C$. Additionally, if $\psi_t$ is the flow of $-\widehat{Z}_t$, then $\frac{d}{dt} f_t \circ \psi_t \geq 0$. In particular, $f_t \geq 0$.
		\end{lemma}
		
		\begin{proof}
			Since $\Lambda_t$ is a graph, at each point of $\Lambda_t$, the tangent space of $T^* \Lambda$ splits into a vertical direction and a ``horizontal'' direction (tangent direction to $\Lambda_t$). We may then split $X_{H_t}$ into two components as well. More precisely, first restrict $X_{H_t}$ to $TT^* \Lambda|_{\Lambda_t}$. Then, it equals $Y_t+Z_t$ where $Y_t$ is tangent to the fibers and $Z_t$ is tangent to $\Lambda_t$.
			
			This horizontal component $Z_t$ may behave in such a way that the functions $f_t$ used to define $\Lambda_t$ are decreasing. However, we may project $Z_t$ to $\Lambda$ where it generates a diffeomorphism on $\Lambda$. If we pull back this projection, we get a vector field on the cotangent bundle (call it $\widehat{Z}_t$) which generates a symplectomorphism. Observe that $Z_t - \widehat{Z}_t$ is a vector field in the fiber direction and also that $Z_t$ and $\widehat{Z}_t$ both vanish on $C$. Let $\psi_t$ be the flow generated by $-\widehat{Z}_t$. The flow counteracts the horizontal movement that comes from the original $Z_t$ and hence, the piece of $X_{H_t}$ that matters when flowing $\psi_t(\Lambda_t)$ is the vertical $Y_t$.
			
			Because $\frac{d}{dt}H_t \geq 0$, the $Y_t$ will only flow the $\psi_t(\Lambda_t)$ in such a way that the defining functions $f_t$ increase as well. That is, $\frac{d}{dt}f_t \circ \psi_t \geq 0$. 
			
			Since $L_0 = \Lambda$, we may assume $f_0 \equiv 0$. Pick $x \in T^* \Lambda$ and $t_0 \in [0,1]$. Then there is a path on the interval $[0,t_0]$ given by $\gamma(t)= \psi_t(\psi^{-1}_{t_0}(x))$ which starts at $\psi^{-1}_{t_0}(x)$ and ends at $x$. Along this path, $\frac{d}{dt} f_t(\gamma(t)) \geq 0$ and $f_0(\gamma(t)) = 0$. Hence, $f_{t_0}(x) \geq 0$.
		\end{proof}

		Let $\delta > 0$ be a parameter and $\rho:\R \to \R$ be the function we saw earlier. Let $\rho_s(x) = (1-s)x + s \rho(x)$ be a linear interpolation of $\rho$. Observe that for $x \in \rho^{-1}(0)$, unless $s=1$, $\rho_s(x) > 0$.
		
		Let $\check{f}_s = \rho_s \circ f_1$. We adjust $\delta$ such that $\check{f}^{-1}_1(0) = (\rho \circ f_1)^{-1}(0) = f_1^{-1}([0,\delta/2])$ is a submanifold with boundary also being a smooth manifold. This is possible since regular values are dense by Sard's theorem. Moreover, we choose $\delta$ to be small enough that so that $\check{f}^{-1}_1(0)$ is contained within a neighborhood $U$ of $C$ in which $f_1$ has no critical points other than those in $C$.
		
		We can say something similar about the $\check{f}_s$. We have that $d\check{f}_s = d\rho_s \circ df_1$. Since $\rho_s(x)=(1-s)x +s \rho(x)$, then $d\rho_s = (1-s) \id + sd\rho$. For $x \in [0,\delta/2]$, $d\rho_s$ acts by scalar multiplication via $1-s$. For $x \in [\delta,\infty)$, $d\rho_s = \id$. Hence, in a neighborhood of $C$ and when $s<1$, the critical points of $\check{f}_s$ are precisely $C$. It is only when $s=1$ do we have $\Sigma:=f_1^{-1}([0,\delta/2])$ as a set of critical points.
		
		Setting $Q_s$ to be the graph of $d\check{f}_s$, we have a family of Lagrangians where $Q_0 = L$ and another description of $\Sigma$ as the intersection $Q_1 \cap \Lambda$. Again, $\Sigma$ is a codim 0 submanifold-with-boundary of $S$ containing $C$. If we pick a complete metric, then $-\nabla f_1$ does not vanish anywhere in $U \setminus C$ and this vector field gives a deformation retract of $\Sigma$ onto $C$. We will postpone showing that the family $Q_s$ form a Hamiltonian isotopy until we introduce Lemma \ref{isotopy} below.
		\\
		
		\noindent \textbf{Case 2:} As before, we work in a Weinstein neighborhood and assume that $S \subset \Lambda$. $\pi:T^* \Lambda \to \Lambda$ is the cotangent bundle. When $\dim S = k < n$, $\pi^{-1}(S) = T^* \Lambda|_S$ is a coisotropic submanifold of $T^*\Lambda$. Coisotropic submanifolds admit foliations and in this case, the foliation is actually a fibration over the leaf space which is symplectomorphic to the symplectic reduction $T^* S$.
		
		In more detail, let $\eta \in T^*_x S$. Then a fiber over $(x,\eta) \in T^* S$ is $F=\{\varphi \in T^*_x \Lambda:\varphi|_{T^*_x S} = \eta \}$. If we use a metric to get an orthogonal decomposition: $T^* \Lambda = T^* S \oplus T^*S^\perp$, all such $\varphi$ decompose as $\varphi = \eta + \alpha$. And the set of $\alpha$ form a vector space. So this fibration is a vector bundle.
		
		So let $p:T^*\Lambda|_S \to T^* S$ be the fibration. As in the first case, we have a family of Lagrangians $\Lambda_t= \phi^H_t(\Lambda) \subset T^* \Lambda$. Then let $\widetilde{\Lambda}_t := p(\Lambda_t \cap T^*\Lambda|_S) \subset T^*S$; it is a Lagrangian. See \cite{McDuffSalamon}, p. 221. \\
		
		\noindent \textbf{Claim:} $\widetilde{\Lambda}_t$ is the graph of some $df_t$ where $f_t:S \to \R$ are functions satisfying $f_0 \equiv 0$ and $f_t \geq 0$.
		
		\begin{proof}
			One way to show that $\widetilde{\Lambda}_t$ is a graph near $C$ is to show that for $x \in C$, $T_x S = T_x \widetilde{\Lambda}_t$. Now, we have the ``clean intersection'' condition that $T_x S = T_x \Lambda \cap T_x L$ for $x \in C$. Also, $dH_t|_C = 0$ and $\text{Hess}_x H_t = 0$ for $x \in C$. This means that the flow does not move $C$ at all and that 1st order derivatives of $H_t$ do not change in any directions at $x \in C$ because the Hessians vanish. Hence, for $x \in C$ $T_x S = T_x \Lambda \cap T_x \Lambda_t$ for each $t$.
			
			Armed with that fact, then projecting to the leaf space, we have for $x \in C$, $T_x S = T_x S \cap T_x \widetilde{\Lambda}_t$ which just means $T_x \widetilde{\Lambda}_t =T_x S$. Arguing as in Case 1, this means that near $C$, each $\widetilde{\Lambda}_t$ is the graph of some section. Being Lagrangians, the sections are closed 1-forms which are locally exact. Hence, near $C$, $\widetilde{\Lambda}_t$ is the graph of $df_t$ where $f_t$ is a smooth family of functions $f_t:S \to \R$ and $f_0 \equiv 0$.
			
			As for showing that $f_t \geq 0$, since $\frac{d}{dt}H_t \geq 0$, the restriction of the $H_t$ to $T^*S$ also satisfies the same property. Hence, the result follows from Lemma 3.3.
		\end{proof}
		
		We are now in a position to apply the same sort of argument towards ``flattening'' the intersection by using a family of functions $\rho_s:\R \to \R$ with the same properties as before. Hence, we obtain a family of Lagrangians $\widetilde{Q}_s$ with $\widetilde{Q}_0 = \widetilde{\Lambda}_1$. Since $\phi^H_1(S) \subset L \cap \phi^H_1(\Lambda)$ and $dH_t$ and the Hessians of $H_t$ vanish on $C$, then along $C$, $\phi^H_1(S)$ is tangent to $S$. Hence, near $C$, $p(\phi^H_1(S) \cap T^*\Lambda|_S)$ is a Lagrangian submanifold of dimension $k$, coinciding with $\widetilde{L} = p(L \cap T^*\Lambda|_S)$. Hence, we'll also denote $\widetilde{Q}_0$ by $\widetilde{L}$. We also denote $\widetilde{\Sigma}:=\widetilde{Q}_1 \cap S$ which is a submanifold with boundary that deformation retracts onto $C$.
		
		Now, recall that for each $x \in C$, $T_x \widetilde{\Lambda}_t = T_x S$ due to $C$ being a minimum set for $f_t$. In particular, $T_x \widetilde{L} = T_x \tilde{\Lambda_1} = T_x S$ for $x \in C$. Now, the Hessian of $\rho_s \circ f_1$ at a critical point will have 1st order terms of $\rho_s$ paired with 2nd order terms of $f_1$ and 2nd order terms of $\rho_s$ paired with 1st order terms of $f_1$. Since the 1st order terms of $f_1$ vanish near $C$ and the 1st order terms of $\rho_s$ vanish near $x=0$, we may conclude that near $C$, both the derivative and Hessian of $\rho_s \circ f_1$ vanish and hence, for each $s$, $T_x \widetilde{Q}_s = T_x \widetilde{L}$ for $x \in C$.
		
		What we need to do now is show that these Lagrangians lift to our original setting; i.e. \\
		
		\noindent \textbf{Claim (restatement of what is to be proved):} There exists a $C^1$-close Lagrangian family $Q_s$ realized by a Hamiltonian isotopy $X_{K_s}$ such that $Q_0 = L$, $T_x Q_s = T_x L$ for all $x \in C$, and $p(Q_s \cap T^* \Lambda|_S)$ is the graph of $d(\rho_s \circ f_1)$. Moreover, the Hamiltonian $K_s$ has vanishing derivative and Hessian on $C$.
		
		\begin{proof}
			The family $\widetilde{Q}_s$ is given by a isotopy $i_s:\widetilde{L} \to (T^*S,d\lambda)$. 
			
			Following \cite{Oh}, in more generality, let $i:[0,1] \times L \to (M,\omega)$ be an isotopy and $L_s$ be the image of $\{s\} \times L$ under $i$. The pullback $i^* \omega$ can be written as $i^* \omega = ds \wedge \alpha + \beta$ where $\alpha$ and $\beta$ both vanish when contracted with $\partial_s$, a vector field tangent to the interval $[0,1]_s$. Let $i_s:L \to [0,1] \times L$ be the natural inclusion.
			
			Next, define $\alpha_s:=\iota_{i_* \partial_s} \omega|_{L_s}$ where $i_* \partial_s$ is the pushforward of $\partial_s$. Note that $i^* \alpha_s(V) = \omega(i_* \partial_s,i_*(V)) = \iota_{\partial_s} i^* \omega = \alpha$. Now, suppose that $i^*_s \alpha$ is closed. This implies that $i^*_s i^* d\alpha_s = 0$. But also, $\alpha_s$ is defined on $L_s$ and the map $i \circ i_s:L \to L_s$ is a diffeomorphism. This implies that $d\alpha_s=0$. Hence, the $\omega$-dual $X_{\alpha_s}$ is a symplectic vector field defined on $L_s$.
			
			Next, by definition, $\alpha_s - \iota_{i_* \partial_s} \omega|_{L_s} = 0$. The $\omega$-dual of this vector field is $V_s:=X_{\alpha_s} - i_*\partial_s$. This vector field $V$ is tangent to $L_s$. To show this, suppose that for any Lagrangian $L$ and vector field $V$, $\iota_V \omega|_L = 0$. This means that $V$ is in $TL^\omega$, the $\omega$-orthogonal space. But $L$ is Lagrangian and so $TL = TL^\omega$. Applying this to our situation, $X_{\alpha_s} - i_*\partial_s \in TL_s$ for each $s$. This means that the vector field $i_* \partial_s$ which realizes the isotopy, can always be upgraded to a symplectic vector field $X_{\alpha_s}$ simply by reparametrizing the domain $L_s$. This was achieved using only the assumption that $i^*_s \alpha$ is closed for each $s$.
			
			The next question is, when is $\alpha_s$ exact? In our situation, the vector fields $X_{\alpha_s}$ vanish on the critical set $C$ and hence, $\alpha_s$ vanishes on $C$. Being closed, we may conclude that $\alpha_s = df_s$ If we take a neighborhood $U$ of $C$ that deformation retracts to $C$, then we may pull $\alpha_s$ back by the retraction, which is a homotopy equivalence, thereby extending the $\alpha_s$ to a neighborhood of $C$. Since de Rham cohomology is a homotopy invariant, this means that the extension is also exact. Hence, the $\omega$-dual of the extension is a Hamiltonian vector field which we'll continue to call $X_{\alpha_s}$. Let us summarize the last few paragraphs as a general lemma.
			
			\begin{lemma} \label{isotopy}
				Let $i:[0,1]_s \times L \to (M,\omega)$ be an isotopy of embeddings and $i_s:L \to [0,1] \times L$ the natural map sending $L$ to $\{s\} \times L$. Then we may write $i^* \omega = ds \wedge \alpha + \beta$ where $\iota_{\partial_s} \alpha = \iota_{\partial_s} \beta = 0$. If $i^*_s \alpha$ is closed, then the vector field $i_* \partial_s$ may be modified to a family of symplectic vector fields $X_s$ defined on $L_s$, the image of $i\circ i_s$. In particular, if $L_0$ is Lagrangian, then each $L_s$ is Lagrangian.
				
				Furthermore, if each of the $X_s$ vanish along a set $C \subset M$, then in a neighborhood of $C$, these $X_s$ may be taken to be Hamiltonian vector fields.
			\end{lemma}
			
			Returning to the proof of the claim, by this lemma, there are time-dependent Hamiltonian vector fields which generate the flow to realize the isotopy $i_s$. Denote the corresponding Hamiltonian by $\widetilde{K}_s$. Because of the tangency of the $\widetilde{Q}_s$ all along $C$, the 1st order derivatives of the Hamiltonian vector fields has to vanish; i.e. $\text{Hess}_x \, \widetilde{K}_s = 0$ for $x \in C$.
			
			The $\widetilde{Q}_s$ differ from each other only in the region of $\kappa := f^{-1}_1([0,\delta])$ because $\rho_s = \id$ on $[\delta,\infty)$. Hence, $\widetilde{K}_s$ is such that the Hamiltonian vector fields vanish on $\widetilde{Q}_s \setminus \kappa$. Let us take $\widetilde{K}_s$ to be constant ouside of this region.
			
			Next, we want to pullback the $\widetilde{Q}_s$ and $\widetilde{L}$ somehow to $T^*\Lambda$ so that the resulting Lagrangians satisfy the tangency condition. We cannot simply pullback by $p$ and then onto some tubular neighborhood of $T^* \Lambda|_S$. Instead, we begin by choosing a metric $g$ on $\Lambda$; such a metric defines a section $\psi$ of $p:T^*\Lambda|_S \to T^* S$ in the following way. For a point $(x,\phi) \in T^* S$, $\psi(x,\phi) = (x,\Phi)$ where $\Phi$ is the unique covector such that $\Phi|_S = \phi$ and it vanishes on the subspace $g$-orthogonal to $T_x S$.
			
			Next, with $k=\dim S$, if we pick a tubular neighborhood $\nu:N \to T^*\Lambda|_S$, then by construction $Q_s:=\nu^{-1}(\psi(\widetilde{Q}_s))$ is an $n$-dim Lagrangian of $T^*\Lambda$ where at each $x \in C$, its tangent space splits as $T_x Q_s = T_x S \oplus T_x F$. Here, $F$ is the fiber of $\nu$ and is $n-k$-dimensional.
			
			So we want to pick the tubular neighborhood in such a way that for each $x \in C$, the tangent space $T_x L = T_x S \oplus T_x F$. We already know that $L$ descends to $\widetilde{L}$ and at points $x \in C$, $T_xS = T_x \widetilde{L}$ so we simply need $T_x F \subset T_x L$.
			
			Now, for a point $x \in C \subset S \subset \Lambda$, $T_{(x,0)}T^*\Lambda \cong T_x S \oplus T_x S^\perp \oplus T^*_x \Lambda$. Here, $T_x S \oplus T_x S^\perp \cong T_x \Lambda$ (recall, we chose a metric on $\Lambda$). On the otherhand, $T_{(x,0)} T^*\Lambda|_S \cong T_x S \oplus T^*_x \Lambda$ and hence, the fibers of the normal bundle of $T^*\Lambda|_S$ at $x$ can be identified with $T_x S^\perp$. Because $T_x S^\perp \subset T_x \Lambda \subset T_{(x,0)} T^*\Lambda$ and $T_x \Lambda$ is a Lagrangian subspace, then $T_x S^\perp$ is isotropic. So we simply choose a tubular neighborhood of $T^*\Lambda|_S$ with fibers $F$ such that $T_x F = T_x S^\perp$. This is possible since $T_x S^\perp$ is complementary to $T_{(x,0)} T^*\Lambda|_S$. Therefore, by construction, $Q_s$ and $L$ are tangent along $C$. By Lemma \ref{isotopy}, we may conclude that near $C$ in $T^*\Lambda$, we also have a Hamiltonian family of vector fields $X_{K_s}$ generating the isotopy. Moreover, similar to in the symplectic reduction, because of the tangency of all the $Q_s$ along $C$, we may conclude that $\text{Hess}_x K_s = 0$ for $x \in C$. We had postponed showing this Hamiltonian isotopy result for Case 1 but the argument is exactly the same. In some sense, there aren't really two cases. Case 1 is simply the scenario where the symplectic reduction is trivial.
			
			Lastly, the family is $C^1$-small simply because we have bounds on the first derivative of $\rho$ and hence on the first derivative of $\rho_s$. On the other hand, $df_1$ vanishes on $C$ and hence, is small near $C$.
		\end{proof}
		
		Let $\Sigma$ be the lift of $\widetilde{\Sigma}$. It is the total space of some bundle over $\widetilde{\Sigma}$, produced by lifting twice. The first time by $p$, we would have the total space of a fibration over $\widetilde{\Sigma}$ with fiber $F$. And then we lift a second time by $\nu$. Each time, the fibers are contractible so $\Sigma$ deformation retracts to $\widetilde{\Sigma}$ which itself deformation retracts to $C$.
		
		In both Case 1 and Case 2, for $s < 1$, $Q_s \cap \Lambda = C$ and $Q_1 \cap \Lambda = \Sigma$. So we may fix an open neighborhood $V$ containing $\Sigma$ and that will suffice as an isolating neighborhood.
	\end{proof}
	
	\noindent \textbf{Remarks:}
	\begin{itemize}
		\item Similar to the ``Morse'' case, we call $\Sigma$ a \textbf{thickening}. We also have an analog of quasi-minimal degeneracy for Lagrangians.
		
		\item It is important to note that in Case 2, $L$ is not the graph of an exact 1-form. It is only when we pass to the symplectic reduction that we have a graph of an exact 1-form in the vicinity of $C$.
		
		\item For the last step of the proof, we also have the option of taking a similar approach to the alternative outlined in the proof of Lemma \ref{fperturb}. This would give us $\widetilde{\Sigma}$ itself as an isolated critical set rather than its lift. However, one technical but resolvable issue is that if we multiply the pullback function $\nu^* p^* \widetilde{K}_s$ by $(1+r^4)$ where $r$ is the radial function, the generated flow of the resulting Hamiltonian will not be tangent to the fibers. To briefly illustrate this, consider the simple example $K:\R \to \R, x \mapsto x^2$. If $y$ is the fiber coordinate of $T^* \R$, then $\check{K}:=(1+r^4)\pi^*K = (1+y^4)x^2$ and $X_{\check{K}} = 4x^2 y^3 \, \partial_x - 2x(1+y^4)\, \partial_y$. When $x \neq 0$ and $y \neq 0$, the vector field is not tangent to the fibers. See figure (when $x=0$, the vector field vanishes; this is not depicted in the image).
		
		\begin{center}
			\includegraphics[scale=0.4]{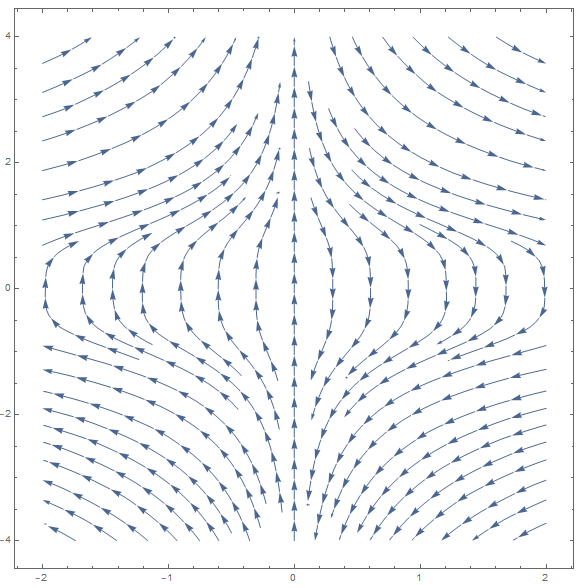}
			
			\caption{Made with Wolfram Mathematica 12.1}
		\end{center}
		
		However, in general, one can modify the Hamiltonian so that it coincides with the $\nu^* p^* \widetilde{K}_s$ when restricted to $Q_s$ and hence, the flow is tangent to the fibers along the family of Lagrangians.
	\end{itemize}
	
	Having defined flattened degeneracy and proven a thickening result, we now give a definition for quasi-minimal degeneracy.
	
	\begin{definition} \label{LMD}
		We say that $C \subset \Lambda \cap L$ is \textbf{quasi-minimally degenerate} (QMD) if there is a time-dependent Hamiltonian $K$ so that $C$ is an isolated family of intersection points of $\Lambda$ and $\phi^K_t(L)$ for each $t \in [0,1]$ and moreover, $C$ is flattened degenerate along $S$ with respect to $\Lambda, \phi^K_1(L)$. 
	\end{definition}
	
	Much like the ``Morse'' situation, we also have an isotopy which keeps $C$ as an isolated set in the intersections and at time 1, we require flattened degeneracy. Hopefully, context makes it clear whether we mean QMD in the ``Morse'' sense or in the Lagrangian sense. In the next section, we will show that there is a good reason for using the same name.
	
	\subsection{Relating the ``Morse'' and Lagrangian Definitions}
	
	In this section, we prove a result which relates the ``Morse'' definition of quasi-minimally degenerate to the Lagrangian definition of quasi-minimally degenerate.
	
	\begin{proposition} \label{graph}
		If $L$ is the graph of $df$ in $T^*M$ where $f$, as a function, is quasi-minimally degenerate in the sense of Definition \ref{QMD} along a critical locus $C \subset M$, then $C$ is also quasi-minimally degenerate in the Lagrangian sense of Definition \ref{LMD}.
	\end{proposition}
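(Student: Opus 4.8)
The plan is to realize the ambient symplectic manifold as $T^*M$ with $\Lambda$ the zero section and $L = \text{graph}(df)$, and to produce the two Hamiltonians demanded by Definition \ref{LMD}: an isotopy Hamiltonian $K$ carrying $L$ to a Lagrangian $L'$ for which $C$ is flattened degenerate, together with the auxiliary Hamiltonian $H$ witnessing that flattened degeneracy. Write $g := f - \tau$ for the flattened degenerate function supplied by Definition \ref{QMD}. Since $C$ is a critical locus of $f$ and $d\tau|_C = 0$, we have $C \subset \Lambda \cap L$, so the setup of Definition \ref{LMD} applies. First I would take $K := \pi^*\tau$, the pullback of $\tau$ along $\pi : T^*M \to M$; its Hamiltonian flow fixes the base and translates each cotangent fiber, carrying $\text{graph}(df)$ to $\text{graph}(d(f - t\tau))$ and hence $L$ to $L' := \phi^K_1(L) = \text{graph}(dg)$ (with the sign convention for $\omega$ chosen accordingly). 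The intersection $\Lambda \cap \phi^K_t(L)$ near $C$ is exactly the critical set of $f - t\tau$, and by the last remark following Definition \ref{QMD} no critical points are created or destroyed near $C$ along this path; thus $C$ is an isolated family of intersection points for every $t \in [0,1]$, as required.

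Next I would verify the first condition of Lagrangian flattened degeneracy for $\Lambda, L'$, using the \emph{same} germ of submanifold $S$. At a point $x \in C$ I identify $T_{(x,0)}T^*M \cong T_xM \oplus T_x^*M$, under which $T_x\Lambda = T_xM \oplus 0$ and $T_xL'$ is the graph of the linear map $\text{Hess}_x g : T_xM \to T_x^*M$ (here $dg_x = 0$, so $(x,0) \in L'$). Their intersection is $\ker \text{Hess}_x g \oplus 0$, which equals $T_xS$ precisely because $g$ is flattened degenerate along $(C,S)$. This gives $T_xS = T_x\Lambda \cap T_xL'$.

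It remains to construct $H$ with $\phi^H_1(S) \subset L'$, with $dH$ and $\text{Hess}\,H$ vanishing along $C$, and with $\frac{d}{dt}H_t \geq 0$. The naive choice $H = -\pi^*g$ sends the zero section, hence $S$, into $\text{graph}(dg) = L'$ and has vanishing differential on $C$, but its Hessian at $x \in C$ is $-\text{Hess}_x g \neq 0$. The key observation is that, when $H = \pi^*\sigma$, the time-one image $\phi^H_1(S)$ depends only on $d\sigma|_S$, since the flow fixes base points; hence I am free to alter $\sigma$ in directions normal to $S$ without disturbing the inclusion $\phi^H_1(S) \subset L'$. I would therefore keep the $1$-jet of $\sigma := -g$ along $S$ but modify its normal--normal second derivatives at $C$ so that $\text{Hess}_x \sigma = 0$ there. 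This is possible because, upon differentiating $d\sigma|_S = -dg|_S$ along $S$, the tangential and mixed blocks of $\text{Hess}_x\sigma$ are forced to equal $-\text{Hess}_x g(v,\cdot)$ with $v \in T_xS = \ker\text{Hess}_x g$, and so vanish; only the normal--normal block is unconstrained, and I set it to zero. Taking $H_t := t\,\pi^*\sigma$, with $\sigma$ normalized so that the time-one flow lands $S$ on $L'$ and shifted by a constant to be nonnegative (which alters neither the flow nor the relevant jets), then yields $\frac{d}{dt}H_t \geq 0$, vanishing derivative and Hessian along $C$, and $\phi^H_1(S) = \{(q, dg_q) : q \in S\} = L' \cap \pi^{-1}(S) \subset L'$. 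That these vector fields integrate to a genuine Hamiltonian isotopy is as in Lemma \ref{isotopy}.

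The main obstacle is exactly this last construction: producing a single Hamiltonian that simultaneously pushes $S$ onto $L'$ to first order and is flat to second order along $C$. The point to emphasize is that the compatibility of these two demands is not automatic but is guaranteed by the defining identity $T_xS = \ker\text{Hess}_x(f-\tau)$ of flattened degeneracy, which forces precisely the Hessian components controlled by the first-order matching to vanish. A secondary, routine technical point is to keep every construction inside a fixed neighborhood of $C$, exploiting that $S$ enters only as a germ, and to check that the modified $\sigma$ extends smoothly.
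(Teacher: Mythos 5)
Your proposal has the same skeleton as the paper's proof: work in $T^*M$ with $\Lambda$ the zero section, take $K = \pi^*\tau$ so that $\phi^K_t(L)$ is the graph of $d(f-t\tau)$, keep $C$ isolated throughout the isotopy (you cite the remark following Definition \ref{QMD}; the paper argues this directly, invoking the transversality of $\ker \text{Hess}\,\tau$ to $TS$ in the positive-codimension case), and obtain the tangency condition from $T_xS = \ker \text{Hess}_x(f-\tau)$. Where you genuinely diverge is at the auxiliary Hamiltonian witnessing flattened degeneracy: the paper simply takes $H = \pi^*(\tau - f)$ in both of its cases and asserts that its derivative and Hessian vanish along $C$. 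That assertion is correct when $\dim S = \dim M$ (there $\ker \text{Hess}_x(f-\tau) = T_xM$ forces $\text{Hess}_x(f-\tau) = 0$), but when $\dim S < \dim M$ the Hessian of $f-\tau$ is nonzero in directions normal to $S$, so the Hessian of the paper's $H$ does \emph{not} vanish along $C$; the paper's closing sentence that ``the same $H$ still has all the correct properties'' glosses over exactly the obstacle you isolate. Your repair---retaining only the $1$-jet of $-(f-\tau)$ along $S$ (which is all the fiberwise-translation flow sees, hence all that $\phi^H_1(S) \subset L'$ requires) and zeroing the normal--normal Hessian block, the tangential and mixed blocks vanishing automatically because $T_xS = \ker \text{Hess}_x(f-\tau)$---is correct, and it is in fact what is needed to make the positive-codimension case of the paper's argument rigorous. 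Two small simplifications: taking $H = \pi^*\sigma$ autonomous gives $\frac{d}{dt}H_t = 0 \geq 0$ at once (as the paper does), which avoids both your rescaling issue (the time-one flow of $t\,\pi^*\sigma$ translates fibers by $-\tfrac{1}{2}d\sigma$, not $-d\sigma$) and the additive constant; and the smooth existence of your correction term is indeed routine, e.g. $c = \tfrac{1}{2}\, Q_{p(\cdot)}\bigl(\nu(\cdot),\nu(\cdot)\bigr)$ in a tubular neighborhood of $S$, where $Q_x$ is the normal--normal block of a covariant Hessian of $f-\tau$ and $p$, $\nu$ are the tubular projection and normal coordinate.
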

	
	To prove this proposition, we first make the following general observation. If $\pi:T^*M \to M$ is the cotangent bundle of $M$ and $\tau:M \to \R$ is any function, then $\pi^* \tau$ is a function on $T^*M$. Observe that since $d(\pi^*\tau) = d\tau \circ d\pi$, if $Y \in TF$ where $F$ is a fiber of the cotangent bundle (so $Y$ is in the vertical directions of $TT^*M$), then $0=d(\pi^* \tau)(Y) = \omega(X_{\pi^* \tau},Y)$ where $X_{\pi^* \tau}$ is the Hamiltonian vector field associated to $\pi^* \tau$. This holds for every $Y$ in the vertical direction. Hence, $X_{\pi^* \tau}$ is $\omega$-orthogonal to the fiber $F$. But the fiber is Lagrangian and so $TF = TF^\omega$. This means $X_{\pi^* \tau} \in TF$ as well. The main point is that the flow of $\pi^* \tau$ is tangent to the fibers of the bundle.

	Next, let $U \subset M$ be an open subset diffeomorphic to an open subset of $\R^n$ such that $T^*M|_U \cong U \times \R^n$; i.e. the cotangent bundle is trivialized on this set, and $U \times \R^n$ is a Darboux chart. This means that if we let $(x_1,...,x_n)$ be coordinates of $U$ and $(y_1,...,y_n)$ be fiber coordinates, then the symplectic structure of $U \times \R^n$ is symplectomorphic to $\omega_{st} = \sum^n_i dx_i \wedge dy_i$. 
	
	Next, let $\tau:U \to \R$ be a smooth function. Then, $X_{\pi^* \tau}$ can be computed. Indeed, as is well known in classical mechanics: \[X_{\pi^* \tau} = \sum^n_i \frac{\partial (\pi^* \tau)}{\partial y_i} \partial_{x_i} - \frac{\partial (\pi^* \tau)}{\partial x_i} \partial_{y_i}.\]
	
	We know $X_{\pi^* \tau}$ is tangent in the fiber directions; this is because $\pi^* \tau$ is constant on the fibers and hence the $\frac{\partial (\pi^* \tau)}{\partial y_i}$ vanish. On the other hand, the coefficients of the $\partial_{y_i}$ are simple $\frac{\partial \tau}{\partial x_i}$. This means that the flow is translation in the fibers by $-t \, d\tau$ where $t$ is the time. This shows us that for a function $\tau:M \to \R$, the image of $d\tau$ viewed as a section of $\pi:T^*M \to M$ is a Lagrangian which is mapped to the zero section by the time 1 flow of $\pi^* \tau$. We summarize this in a lemma.
	
	\begin{lemma} \label{cotflow}
		Let $\tau:M \to \R$ be a smooth function and $\pi:T^*M \to M$ be the cotangent bundle of $M$. Then the autonomous Hamiltonian vector field $X_{\pi^* \tau}$ is tangent to the fibers and the time $t$ flow $\phi^{\pi^* \tau}_t$ acts by translation on the fibers via $-t\, d\tau$.
	\end{lemma}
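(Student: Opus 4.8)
The plan is to prove the two assertions — tangency to the fibers, and the explicit translation formula for the flow — by combining the coordinate-free observation already recorded above with the Darboux-chart computation. Both ingredients are essentially assembled in the discussion preceding the statement, so the proof is largely a matter of packaging them and checking that the local computation glues to a global, chart-independent conclusion.

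For tangency, I would argue intrinsically. Since $\pi$ collapses each fiber $F$ to a point, the pullback $\pi^*\tau$ is constant along $F$, so its differential $d(\pi^*\tau) = \pi^* d\tau$ annihilates every vertical vector $Y \in TF$. By the defining relation $\iota_{X_{\pi^*\tau}}\omega = d(\pi^*\tau)$, this gives $\omega(X_{\pi^*\tau}, Y) = 0$ for all $Y \in TF$, i.e. $X_{\pi^*\tau} \in (TF)^\omega$. The fibers of $T^*M$ are Lagrangian, so $TF = (TF)^\omega$, and hence $X_{\pi^*\tau} \in TF$; the vector field is tangent to the fibers.

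For the explicit flow, I would pass to a Darboux chart $T^*M|_U \cong U \times \R^n$ with coordinates $(x_i, y_i)$ and $\omega = \sum_i dx_i \wedge dy_i$, as set up above. Because $\pi^*\tau = \tau(x)$ is independent of the fiber variables, the partials $\partial(\pi^*\tau)/\partial y_i$ vanish, and the standard formula collapses to $X_{\pi^*\tau} = -\sum_i (\partial\tau/\partial x_i)\,\partial_{y_i}$. This is purely vertical (re-confirming tangency) and has no $\partial_{x_i}$-component, so the base coordinates $x_i$ are frozen along any integral curve. The flow equations therefore decouple: $x_i(t) = x_i(0)$ and $\dot y_i = -(\partial\tau/\partial x_i)(x(0))$, which integrate exactly to $y_i(t) = y_i(0) - t\,(\partial\tau/\partial x_i)(x(0))$. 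Invariantly this reads $\phi^{\pi^*\tau}_t(x,p) = (x,\, p - t\,d\tau_x)$, translation of the fiber by $-t\,d\tau$.

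I expect no genuine obstacle here; the only points requiring care are the sign convention for $X_{\pi^*\tau}$ (fixed by the chosen form $\iota_{X}\omega = dH$, which reproduces the coordinate formula quoted above) and the observation that, since the resulting motion is translation by $-t\,d\tau$ — a manifestly coordinate-independent prescription — the chartwise formula patches to a global statement, with the flow complete because the base point is stationary while the fiber evolves linearly in $t$. As an immediate consequence, the time-$1$ flow sends the graph of $d\tau$ to the zero section, which is the form in which the lemma is applied in the proof of Proposition \ref{graph}.
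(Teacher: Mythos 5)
Your proof is correct and follows essentially the same route as the paper: the intrinsic argument that $d(\pi^*\tau)$ annihilates vertical vectors together with the Lagrangian condition $TF = (TF)^\omega$ gives tangency, and the Darboux-chart formula for $X_{\pi^*\tau}$ (with the base coordinates frozen) integrates to translation by $-t\,d\tau$. The paper proves the lemma in exactly this two-step way in the discussion preceding its statement, so your packaging, including the sign-convention check and the remark that the time-$1$ flow sends the graph of $d\tau$ to the zero section, matches the intended argument.
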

	
	\noindent \textit{Proof of Prop. \ref{graph}} In our situation, we assume that $f$ is quasi-minimally degenerate along $C$ with corresponding submanifold $S$ and that $L$ is the graph of $df$ inside of $T^*M$. By definition, there exists a $\tau:M \to [0,\infty)$ with several properties including $\tau^{-1}(0)=C$, $(f-\tau)|_S$ is minimal on $C$, and $\ker \text{Hess}_x (f-\tau) = T_x S$ for all $x \in C$. Though it's not necessary, we write this as two cases for the purpose of illustrating why we need that $\ker \text{Hess}_x \tau$ is transverse to $T_x S$ for $x \in C$. \\
	
	\noindent \textbf{Case 1:} Suppose $\dim S = \dim M$. In this case, if $\pi:T^*S \to S$ is the contangent bundle, then the flow of $K:=\pi^* \tau$ fixes the points $x \in C$. Moreover, $(f-\tau)|_S$ has $C$ as a minimum; WLOG, suppose the minimum value is 0. Because $\tau \geq 0$, for $x$ near $C$, $\tau(x) > 0$. In order for $C$ to be a minimum of $(f-\tau)|_S$, we need $f(x) > \tau(x)$. This implies that in a small neighborhood of $C$, $f$ increases more rapidly than $C$ in all directions of $S$ (and hence of $M$). In particular, for $x$ near but not in $C$, $df_x - t \, d\tau_x \neq 0$ for $t \in [0,1]$. This translation, as Lemma \ref{cotflow} tells us, is the flow of $\pi ^* \tau$. Hence, $C$ remains an isolated set of the intersection $M \cap \phi^K_t(L)$ for each $t$.
	
	Next, since $S$ is codim 0, $\text{Hess}_x (f-\tau)$ vanishes completely on $T_x M$ for $x \in C$. Conceptually, this means that the 1st order derivatives of $f-\tau$ are not deviating at all from zero along $C$ and hence $\phi^K_1(L)$ is tangent to $M$ (and hence $S$) at points in $C$; i.e. $T_x M = T_x S = T_x M \cap T_x \phi^K_1(L)$ which means $T_x M = T_x \phi^K_1(L)$ for $x \in C$. 
	
	Lastly, to show that $C$ is flattened degenerate with respect to $\phi^K_1(L), M$, we need a Hamiltonian $H$ such that $\phi^H_1(S) \subset \phi^K_1(L)$ along with the other properties. Since $\phi^K_t(L)$ can be described as the graph of $d(f-t\tau)$, we may simply let $H=\pi^*(\tau-f)$. Then, $dH|_C = 0$ and $\text{Hess}_x H = 0$ for $x \in C$. And since it is an autonomous Hamiltonian, $\frac{d}{dt}H =0$. The flow of $-H$ brings $\phi^K_1(L)$ to the zero section; hence, the flow of $H$ brings the zero section to $\phi^K_1(L)$, including $\phi^H_1(S)$. \\
	
	\noindent \textbf{Case 2:} Suppose $\dim S < \dim M$. In this case, one concern is that though $(f-\tau)|_S$ is minimal on $C$ and hence, $C$ is isolated in $S$, it may be that $K=\pi^* \tau$ will behave badly in the normal directions to $S$. This is why it is crucial that we have another condition on $\tau$: $\ker \text{Hess}_x \tau$ is transverse to $T_x S$ for $x \in C$. If we pick a metric, we can then consider the Hessian of $\tau$ at a point $y \in S$, near $C$. Since it is near $C$ and transversality is an open condition, $\ker \text{Hess}_y \tau$ is transverse to $T_y S$. In other words, $\tau$ doesn't do anything in the directions normal to $S$, such as introduce new critical points.
	
	Thus, we can still use $K=\pi^* \tau$ as the Hamiltonian and $C$ remains an isolated subset of $\phi^K_t(L) \cap M$ for all $t \in [0,1]$. Once again, $\phi^K_1(L)$ is the graph of $d(f-\tau)$. Moreover, since $\ker \text{Hess}_x (f-\tau) = T_x S$ for $x \in C$, we have that $T_x S = T_x M \cap T_x \phi^K_1(L)$. And lastly, the same $H$ still has all the correct properties. \qed \\
	
	\noindent The following corollary is immediate.
	
	\begin{corollary}
		Let $\Lambda,L \subset (M,\omega)$ be Lagrangians and $C \subset \Lambda \cap L$. Suppose there exists a Weinstein neighborhood $U$ around $C$ and a symplectomorphism $\varphi:U \to V$ where $V$ is a neighborhood of the zero section of $T^* \Lambda$, $\Lambda \cap U$ is mapped to the zero section, and $L \cap U$ is mapped to a Lagrangian in $V$ which is the graph of $df$ where $f$ is QMD. Then, $C$ is QMD in the Lagrangian sense.
	\end{corollary}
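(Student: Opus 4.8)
The plan is to reduce everything to Proposition \ref{graph} by pushing the local picture through the symplectomorphism $\varphi$ and then observing that Lagrangian quasi-minimal degeneracy (Definition \ref{LMD}) is a purely local, symplectomorphism-invariant condition. First I would note that, since $\varphi:U \to V$ carries $\Lambda \cap U$ to the zero section of $T^*\Lambda$ and $L \cap U$ to the graph of $df$ with $f$ QMD, the image $\varphi(C)$ lies in (zero section)$\,\cap\,$(graph of $df$) in precisely the configuration treated by Proposition \ref{graph}, with the base $M$ of that proposition taken to be $\Lambda$. Thus Proposition \ref{graph} supplies a time-dependent Hamiltonian $\widetilde{K}$ on $V$, a submanifold $\widetilde{S}$, and a Hamiltonian $\widetilde{H}_t$ witnessing that $\varphi(C)$ is QMD in the Lagrangian sense for the pair (zero section, graph of $df$) inside $T^*\Lambda$.

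Next I would transport this data back by $\varphi$. Set $K := \widetilde{K} \circ \varphi$ and $H_t := \widetilde{H}_t \circ \varphi$ on $U$, and $S := \varphi^{-1}(\widetilde{S})$. Because all of the QMD data is supported near $\varphi(C)$, I may cut these functions off with a bump function so they extend by a constant to all of $M$ without altering anything near $C$, exactly as in the extension arguments used in Lemma \ref{fperturb} and Theorem \ref{perturb}. Since $\varphi$ is a symplectomorphism, it conjugates Hamiltonian flows, $\phi^K_t = \varphi^{-1} \circ \phi^{\widetilde{K}}_t \circ \varphi$ (and likewise for $H$), so each clause of Definition \ref{LMD} transports verbatim: $C$ is an isolated family of intersection points of $\Lambda$ and $\phi^K_t(L)$ for every $t$; the identity $T_x S = T_x \Lambda \cap T_x L$ along $C$ follows by applying $d\varphi^{-1}$ to its image-side counterpart; the monotonicity $\frac{d}{dt} H_t \geq 0$ is unaffected by precomposition with the time-independent $\varphi$; and the containment $\phi^H_1(S) \subset \phi^K_1(L)$ follows from its image-side version by applying $\varphi^{-1}$.

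The only point requiring any care is the invariance of the flattened-degeneracy clauses under $\varphi$, and this is minor. Locality and the vanishing of $dH_t$ on $C$ are what make it clean: at points $x \in C$ where the first derivative of $H_t$ vanishes, $\text{Hess}_x H_t$ is a well-defined bilinear form independent of any connection, so $\text{Hess}_x(\widetilde{H}_t \circ \varphi) = (d\varphi_x)^* \, \text{Hess}_{\varphi(x)} \widetilde{H}_t \, d\varphi_x$ and the vanishing of derivative and Hessian transports directly. I do not expect a genuine obstacle here; the substance of the result is entirely contained in Proposition \ref{graph}, and the corollary is essentially a transport-of-structure statement, which is why it is immediate.
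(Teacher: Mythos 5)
Your proposal is correct and takes exactly the route the paper intends: the paper gives no separate argument, stating only that the corollary is immediate from Proposition \ref{graph}, precisely because Definition \ref{LMD} is intrinsic (symplectomorphism-invariant and local near $C$), as remarked earlier in the paper. Your details --- conjugating Hamiltonian flows by $\varphi$, cutting off with bump functions, and the invariance of the Hessian at points where the differential vanishes --- are a faithful fleshing-out of that one-line justification.
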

	
	\subsection{Local Lagrangian Floer Theory}
	
	In the definitions above, we took the effort to ensure that if $C$ is an isolated subset of a Lagrangian intersection, then whenever we perturbed the Lagrangians by Hamiltonian isotopies, $C$ remained isolated or at least its homotopy type does not change. The reason for this is because we want to study Lagrangian intersections using Lagrangian Floer theory. Let's begin with a rough intuitive description of the usual Lagrangian Floer homology before discussing a local homology theory.
	
	In nice cases, Lagrangian Floer homology associates to a pair $(L_0,L_1)$ of Lagrangians a group $HF^*(L_0,L_1)$ which has a few properties.
	
	\begin{enumerate}
		\item $HF^*(L_0,L_1)$ ``categorifies'' intersection numbers in the sense that $\chi(HF^*(L_0,L_1)) = L_0 \cdot L_1$ (intersection number of the smooth topology).
		
		\item $HF^*(L_0,L_1)$ is Hamiltonian isotopy invariant. So if $\phi^{H_0}, \phi^{H_1}$ are Hamiltonian diffeomorphisms, then $HF^*(\phi^{H_0}(L_0),\phi^{H_1}(L_1)) \cong HF^*(L_0,L_1)$.
		
		\item If $L=L_0=L_1$, then $HF^*(L,L) \cong H^*(L)$, the singular homology of $L$.
		
		\item If $L_0$ and $L_1$ intersect transversally, then $HF^*(L_0,L_1)$ is the homology of a chain complex generated by the intersection points. This implies that the rank of $HF^*$ gives a (refined) lower bound for Lagrangian intersections: $\#(\phi^H_1(L_0) \cap L_1) \geq \text{rk} \, HF^*(L_0,L_1) \geq L_0 \cdot L_1$.
	\end{enumerate}
	
	We are deliberately vague about what ``nice cases'' means but these properties cannot always hold. Indeed, a compact Lagrangian $L$ in $\C^n$ can be displaced by a Hamiltonian $\phi$ so that $L \cap \phi(L) = \varnothing$. Hence, $0 \geq \text{rk} \, HF^*(L,\phi(L)) =\text{rk} \, HF^*(L,L) = \text{rk} \, H^*(L)$; this obviously cannot happen. The example can be modified so that $L$ is displaced by a compactly supported Hamiltonian isotopy and hence, the compact manifold $\mathbb{CP}^n$ also serves as a counterexample.
	
	However, moving forward, we will not be too concerned with these issues as they have been addressed in many other texts. We shall simply proceed to the local situation and add some rigor to the description. For full details, consult section 3 of \cite{Pozniak}. Let $P(L_0,L_1)$ be the space of paths starting on $L_0$ and ending on $L_1$. Let $\mathcal{U} \subset P(L_0,L_1)$ be a closed subset and let $ev:\mathcal{U} \times I \to M$ be the map sending $(\gamma,t) \mapsto \gamma(t)$. We say that $\mathcal{U}$ is bounded if the image of $ev$ is precompact. 
	
	Next, we would like to define an action functional. In general, the action functional is defined only on the universal cover $\widetilde{P}(L_0,L_1)$. Choose a base point $\gamma_0 \in P(L_0,L_1)$. Let $u:I \times I \to M$ represent an element $\tilde{\gamma} \in \widetilde{P}$; i.e. $u(0,t) = \gamma_0(t), u(1,t) = \gamma(t)$, and $u(s,i) \in L_i$. We also introduce a Hamiltonian $H: M \times [0,1] \to \R$. Then, the action functional is
	
	\[A_H(\gamma) := \int u^* \omega + \int^1_0 H_t(\gamma(t)) \, dt. \]
	
	\noindent The critical points are precisely the paths $\gamma(t) \in P(L_0,L_1)$ satisfying $\gamma'(t) = X_t(\gamma(t))$. Here, $X_t$ is the time-dependent Hamiltonian vector field. To make this a local theory, we simply consider the critical points of $A_H$ inside of $\mathcal{U}$.
	
	We also consider a moduli space $\mathcal{M}_{J,H}(L_0,L_1,\mathcal{U})$ of $J$-holomorphic strips $u:\R_s \times I_t \to M$; these strips satisfy a twisted Cauchy-Riemann equation $\overline{\partial}u + \nabla_u H = 0$. Additionally, we require the elements of this moduli space to satisfy $u(s,\cdot) \in \mathcal{U}$ for all $s \in \R$. The \textbf{maximal invariant subset} $\mathcal{S}_{J,H}(\mathcal{U})$ is defined to be the image of $\mathcal{M}_{J,H}(L_0,L_1,\mathcal{U})$ under the evaluation map $ev: \R \times \mathcal{M}_{J,H}(L_0,L_1) \to P(L_0,L_1)$, $ev(s,u)(t) = u(s,t)$.
	
	We also say that $\mathcal{S}_{J,H}(\mathcal{U})$ is isolated if its closure under the compact-open topology is contained in the interior of $\mathcal{U}$. If it is isolated, then whenever a sequence $u_n \in \mathcal{M}_{J,H}(L_0,L_1,\mathcal{U})$ converges to $u \in \mathcal{M}_{J,H}(L_0,L_1)$, in fact, $u \in \mathcal{M}_{J,H}(L_0,L_1,\mathcal{U})$.
	
	It was shown in Pozniak's thesis \cite{Pozniak} that:
	
	\begin{proposition} \label{lag_invariance}
		Assume that $\mathcal{U}$ is bounded, $\mathcal{S}_{J,H}(\mathcal{U})$ is isolated and the symplectic action $\mathcal{A}_H$ is defined on $\mathcal{U}$. There is an $\epsilon > 0$ such that if $\|J'-J\|_{C^1} < \epsilon$ and $\|H'-H\|_{C^1} < \epsilon$, then $\mathcal{S}_{J',H'}(\mathcal{U})$ is also isolated. Moreover, if both pairs $(J,H), (J',H')$ are regular, then
		\[H_*(C_*(L_0,L_1,\mathcal{U},J',H')) \cong H_*(C_*(L_0,L_1,\mathcal{U},J,H)).\]
	\end{proposition}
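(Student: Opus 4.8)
The plan is to follow the standard two-step template for invariance of Floer homology, adapted to the local setting: first establish that isolation of the maximal invariant set persists under $C^1$-small perturbations of the data, and then build continuation maps whose moduli spaces stay trapped inside the isolating neighborhood, so that they assemble into a chain homotopy equivalence.

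For the first claim I would argue by contradiction via Gromov--Floer compactness. Since $\mathcal{S}_{J,H}(\mathcal{U})$ is isolated, its closure lies in the interior of $\mathcal{U}$; fix a slightly smaller closed neighborhood $\mathcal{U}'$ with $\overline{\mathcal{S}_{J,H}(\mathcal{U})} \subset \mathrm{int}\,\mathcal{U}' \subset \mathcal{U}' \subset \mathrm{int}\,\mathcal{U}$. If no $\epsilon$ worked, there would be a sequence $(J_n,H_n) \to (J,H)$ in $C^1$ together with strips $u_n \in \mathcal{M}_{J_n,H_n}(L_0,L_1,\mathcal{U})$ whose evaluation images meet $\mathcal{U} \setminus \mathrm{int}\,\mathcal{U}'$. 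Because $\mathcal{U}$ is bounded the images lie in a fixed precompact set, and because $\mathcal{A}_H$ is single-valued on $\mathcal{U}$ the energies are uniformly bounded; hence, after translating in the $s$-variable to center the escaping behavior, a subsequence converges (possibly with breaking or bubbling) to a $(J,H)$-configuration lying in $\overline{\mathcal{U}}$ and still meeting $\mathcal{U}\setminus\mathrm{int}\,\mathcal{U}'$. Every surviving component that remains in $\mathcal{U}$ contributes to $\mathcal{S}_{J,H}(\mathcal{U})$, so its closure would meet $\partial\mathcal{U}'$ and fail to lie in $\mathrm{int}\,\mathcal{U}$, contradicting isolation. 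This yields the uniform $\epsilon$, and the same argument applied to the $s$-dependent equation shows isolation is maintained along any $C^1$-small homotopy of data.

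For the second claim I would construct continuation maps in the usual way. Shrinking $\epsilon$ if necessary, take $(J',H')$ within $C^1$-distance $\epsilon$ of $(J,H)$ and choose an $s$-dependent homotopy $(J_s,H_s)$ equal to $(J,H)$ for $s \ll 0$ and $(J',H')$ for $s \gg 0$, with the whole path staying inside the $\epsilon$-ball (linear interpolation suffices). By the persistence of isolation the associated continuation moduli spaces stay inside $\mathcal{U}$ and are compact; after a generic perturbation of the homotopy making them regular, counting rigid elements defines a chain map $\Phi : C_*(L_0,L_1,\mathcal{U},J,H) \to C_*(L_0,L_1,\mathcal{U},J',H')$. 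Running the reversed homotopy gives a map $\Psi$ in the other direction, and the standard homotopy-of-homotopies argument, gluing a homotopy to its reverse and deforming to the constant homotopy with all moduli again confined to $\mathcal{U}$, produces chain homotopies $\Psi\circ\Phi \simeq \mathrm{id}$ and $\Phi\circ\Psi \simeq \mathrm{id}$. Passing to homology gives the claimed isomorphism.

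The hard part, and where the local nature of the statement really bites, is the compactness input underlying both steps: I must rule out that any Floer or continuation trajectory for data near $(J,H)$ escapes the isolating neighborhood. This reduces to the two ingredients the hypotheses are tailored to supply, namely a uniform energy bound (from $\mathcal{U}$ bounded together with $\mathcal{A}_H$ single-valued on $\mathcal{U}$) and Gromov--Floer compactness (which requires exactly the $C^1$-control on $(J,H)$ assumed in the statement). The technical care goes into showing that the limit of trajectories for the perturbed data is a broken or bubbled configuration all of whose surviving pieces lie in $\overline{\mathcal{U}}$, so that the escape to the boundary persists in the limit and contradicts interior isolation; once this is secured, transversality and the algebra of continuation maps are routine.
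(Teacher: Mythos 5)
The paper itself contains no proof of this proposition: it is stated verbatim as a result imported from Pozniak's thesis \cite{Pozniak} (``It was shown in Pozniak's thesis that\ldots''), so the only argument to compare against is Pozniak's original one. Your two-step scheme --- persistence of isolation under $C^1$-small perturbations via uniform energy bounds (from boundedness of $\mathcal{U}$ and single-valuedness of $\mathcal{A}_H$) plus a Gromov--Floer compactness contradiction, followed by continuation maps whose moduli spaces are confined to the isolating neighborhood and assemble into a chain homotopy equivalence --- is precisely the standard argument Pozniak gives, so your proposal is correct in outline and essentially the same approach as the cited source.
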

	
	\noindent In this context we say $\mathcal{S}_{J',H'}(\mathcal{U})$ is a continuation of $\mathcal{S}_{J,H}(\mathcal{U})$. When the context is clear and we have the given Lagrangians, $H$, and $J$, we may sometimes simplify notation and just write $HF(\mathcal{U})$ for the local Floer homology.
	
	\subsection{Hamiltonian Floer Theory} \label{hamfloer}
	
	It should be stated that we may define flattened and quasi-minimal degeneracy for Hamiltonian Floer theory as well since we can recover Hamiltonian Floer theory from Lagrangian Floer theory. If $K:M \times [0,1] \to \R$ is a time dependent Hamiltonian and $\phi^K_1$ is its time one flow, then we study the fixed points of $\phi^K_1$. Let $\Gamma \subset M \times M$ be the graph of $\phi^K_1$ and $\Delta \subset M \times M$ the diagonal. The fixed points of $\phi^K_1$ are in one-to-one correspondence with the intersection points $\Gamma \cap \Delta$ (for details of how to relate the differentials of the two complexes, see \cite{ham_lag}). In the symplectic manifold $(M \times M, \omega \oplus (-\omega))$, $\Gamma$ and $\Delta$ are Lagrangian submanifolds. Hence, we say that a set $C$ of fixed points of $\phi^K_1$ is flattened degenerate (QMD resp.) if and only if $C \times C$ is flattened degenerate (QMD resp.) in $\Gamma \cap \Delta$. We conjecture that there is an equivalent definition that is more natural or at least easier to work with for the Hamiltonian setting but we do not yet have good candidates. As a suggestion, if $K$ is the time-dependent Hamiltonian from above, then the definition should involve studying $\ker(\phi^K_1 - \id)$.
	
	\section{Clean Intersections and Pozniak's Results}
	
	From now on, we shall prefer the Lagrangian viewpoint but everything translates over to the Hamiltonian viewpoint as outlined above. In Pozniak's thesis \cite{Pozniak}, he gives us a way to compute local Floer homology for Lagrangians that cleanly intersect along a submanifold.
	
	\begin{definition}
		Let $L_0,L_1$ be Lagrangians and $N$ a submanifold. We say that $L_0$ and $L_1$ have a clean intersection along $N$ if $N \subset L_0 \cap L_1$ and for every $x \in N$, $T_x N = T_x L_0 \cap T_x L_1$.
	\end{definition} 
	
	In general, the intersection may be wild but if some part of the intersection is clean along $N$ and there are no other intersection points in a small neighborhood of $N$, there is a way to define local Floer homology in a neighborhood of $N$. If $\partial N \neq \varnothing$, then we do not have a clean intersection but it is straightforward to adapt Pozniak's arguments to intersections along manifolds with boundary; we will do that in section 5. We first record Pozniak's original results.
	
	\subsection{A Standard Model}
	
	The first of Pozniak's results shows that cleanly intersecting Lagrangians have a standard model.
	
	\begin{theorem} \label{normal_form}
		Let $(M,\omega)$ be a symplectic manifold and $L_0,L_1$ two Lagrangian submanifolds of $M$ which intersect cleanly along a compact manifold $N$. There exist a vector bundle $\tau:L \to N$, a neighborhood $V_0$ of $N$ in $T^*L$, a neighborhood $U_0$ of $N$ in $M$, and a symplectomorphism $\phi:U_0 \to V_0$ such that
		\[\phi(L_0 \cap U_0) = L \cap V_0 \text{ and } \phi(L_1 \cap U_0) = TN^{ann} \cap V_0.\]
	\end{theorem}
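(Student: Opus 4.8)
The plan is to prove this as a clean-intersection refinement of the Weinstein Lagrangian neighborhood theorem, in three stages: a Weinstein reduction for $L_0$, a pointwise linear normal form along $N$, and a relative Moser argument to upgrade first-order agreement to genuine agreement.

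First I would fix a tubular neighborhood of the compact submanifold $N$ inside $L_0$ and identify it with the total space of a vector bundle $\tau:L \to N$ (the normal bundle of $N$ in $L_0$); this produces the bundle named in the statement. Applying the Weinstein Lagrangian neighborhood theorem (see \cite{McDuffSalamon}) to $L_0$, I obtain a symplectomorphism from a neighborhood $U_0$ of $N$ in $M$ onto a neighborhood $V_0$ of the zero section in $T^*L$ carrying $L_0 \cap U_0$ to the zero section $L \cap V_0$. After this reduction we may assume $M = T^*L$ with $L_0$ the zero section, and $L_1$ has become a Lagrangian $L_1'$ meeting the zero section cleanly along $N$, so that $T_x L_1' \cap T_x L = T_x N$ for every $x \in N$.

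Next I would establish a linear normal form along $N$. Over each $x \in N$ the symplectic vector space $T_x(T^*L) = T_x L \oplus T^*_x L$ carries two Lagrangian subspaces, the horizontal $T_x L$ and $T_x L_1'$, whose intersection is the fixed subspace $T_x N$. In the model $V = A \oplus A^*$ (with $A = T_x L$ and $W = T_x N$), standard linear symplectic algebra shows that any Lagrangian meeting $A$ exactly in $W$ is carried to $W \oplus \mathrm{Ann}(W)$ by an element of $Sp(V)$ stabilizing $A$ and fixing $W$ pointwise (take $P = \mathrm{id}$ and a symmetric shear vanishing on $W$); since $T_x(TN^{ann}) = T_x N \oplus \mathrm{Ann}(T_x N)$, this is precisely the desired target. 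The elements realizing this matching form a contractible affine bundle over $N$, so a smooth section exists and assembles into a symplectic bundle automorphism of $T(T^*L)|_N$ covering $\mathrm{id}_N$; realizing it as the linearization along $N$ of a symplectomorphism of $T^*L$ that preserves the zero section and fixes $N$, I arrange that $TL_1'|_N = T(TN^{ann})|_N$.

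Finally, with $L_1'$ and $TN^{ann}$ now agreeing to first order along $N$ --- both containing $N$ and sharing tangent spaces there --- I would interpolate between them by a family of Lagrangians tangent along $N$ and run a Moser-type isotopy, upgrading the interpolating vector field to a Hamiltonian one exactly as in Lemma \ref{isotopy}, chosen to vanish on the zero section so that $L_0$ is preserved throughout. The time-one flow is a symplectomorphism fixing $N$, preserving the zero section, and carrying $L_1'$ onto $TN^{ann}$ near $N$; composing it with the two earlier symplectomorphisms and shrinking $U_0, V_0$ yields the required $\phi$. I expect the relative Moser step to be the main obstacle: one must simultaneously fix $N$, preserve the Lagrangian $L_0$, and carry out the construction parametrically over a possibly topologically nontrivial base $N$. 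Globalizing the pointwise linear model via contractibility of the fibers is the secondary technical point; both are standard but require care.
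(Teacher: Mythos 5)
Your route is genuinely different from the paper's (Pozniak's) proof, so let me first compare. Pozniak never normalizes tangent spaces along $N$: he constructs a Lagrangian fibration $\sigma:T^*L \to TN^{ann}$ and, by a Moser argument, a \emph{fiber-preserving} symplectomorphism $\chi_1$ onto a neighborhood in $T^*(TN^{ann})$, arranged so that the original zero section $L$ is carried into the union of fibers over $N$; then the fiberwise translation $\beta \mapsto \beta - \alpha(x)$ flattens the graph $\chi_1(L_1) = \Gamma_\alpha$ onto $TN^{ann}$ while acting as the identity on all fibers over $N$ (since $\alpha|_N = 0$), hence fixing $\chi_1(L)$ pointwise. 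That is his entire mechanism for preserving $L_0$. Your steps 1 and 2 instead normalize to first order: the linear algebra is correct ($\Lambda \cap A = W$ forces $\Lambda \subset A \oplus \mathrm{Ann}(W)$, and a symmetric shear fixing $A$ pointwise carries $\Lambda$ to $W \oplus \mathrm{Ann}(W)$; the admissible shears form a nonempty affine space, so a smooth family over $N$ exists), and the resulting bundle automorphism is realizable as the linearization of a symplectomorphism fixing the zero section pointwise, e.g.\ as the time-one flow of a fiberwise-quadratic Hamiltonian $H(q,p) = \tfrac12 p(C_q p)$, whose Hamiltonian vector field vanishes identically on the zero section. You should say how this realization is done, but it is a fillable detail.

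The genuine gap is in your final step, and it sits exactly where the theorem's difficulty lies: making the isotopy that moves $L_1'$ onto $TN^{ann}$ simultaneously preserve $L$. Lemma \ref{isotopy} does not do what you ask of it: it upgrades the generating field of a Lagrangian isotopy to a Hamiltonian field near a set where that field vanishes; it contains no mechanism for forcing the field to vanish along a \emph{second} Lagrangian that the isotopy must not move. Concretely, write $L_1'$ near $N$ as the graph of a closed one-form $\alpha = df$ over $L_2 = TN^{ann}$ (possible after your step 2, with $f|_N = 0$, $df|_N = 0$). The natural generator of the straight-line isotopy $\Gamma_{(1-t)\alpha}$ is $\pi^*f$ (Lemma \ref{cotflow}), and $X_{\pi^*f}$ does \emph{not} vanish on $L$: it vanishes only on fibers over critical points of $f$, and in a Weinstein chart for $L_2$ there is no reason for $L$ to lie over $N$ --- this is precisely the problem Pozniak's fibration lemma exists to solve. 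To close your argument you need three missing ingredients: (i) each $\Gamma_{(1-t)\alpha}$ meets $L$ cleanly with intersection exactly $N$ near $N$, which follows because step 2 makes every $\Gamma_{(1-t)\alpha}$ tangent to $L_2$ along $N$, so $T_x\Gamma_{(1-t)\alpha} \cap T_xL = T_xL_2 \cap T_xL = T_xN$ for all $t$ at once; (ii) the observation that generating the isotopy constrains the Hamiltonian only through its restriction to $\Gamma_{(1-t)\alpha}$, so one may add any correction vanishing there; and (iii) a Whitney-type extension producing a correction, vanishing on $\Gamma_{(1-t)\alpha}$, whose sum with $\pi^*f$ has vanishing 1-jet along $L$ --- the compatibility conditions on $N$ for this extension hold because $f$ and $df$ both vanish on $N$. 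With (i)--(iii) the time-one flow fixes $L$ pointwise and carries $L_1'$ to $TN^{ann}$, and your proof goes through; without them, the preservation of $L_0$ --- the whole content of the normal form --- is asserted rather than proved.
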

	
	\noindent Before sketching Pozniak's proof, here is a short outline. First, he proved that cleanly intersecting Lagrangians may be put into a standard form. Then, by way of a Moser-type argument, Pozniak showed that there exists a vector bundle $\tau:L \to N$ and also neighborhoods $U$ of $N$ in $M$ and $V$ of $N$ in $T^*L$, and a symplectomorphism $\phi:U \to V$ which satisfies:
	$\phi(L_0 \cap U) = L \cap V$ and $\phi(L_1 \cap U) = TN^{ann} \cap V$. Here, $TN^{ann} = \{\alpha \in T^*L_N: \alpha|_{TN} = 0 \}$ (the annihilator). The $L$ he chose is $L=TN^\perp \subset TL_0$ for a chosen metric on $L_0$ and the exponential map gives the desired tubular neighborhood. The proof does not actually rely on compactness of $N$ and can be adapted to open manifolds. \\
	
	\noindent \textit{Sketch of Pozniak's proof:}
	
	\begin{enumerate}
		\item Use the Weinstein neighborhood theorem to view a neighborhood of $L_0$ as symplectomorphic to a neighborhood of the zero section of $T^*L_0$. Choose a metric on $L_0$ and let $L = TN^\perp \subset TL_0$. The exponential map gives a diffeomorphisms of neighborhoods of $N$ in $L_0$ and $L$ which induces a symplectomorphism. Therefore, without loss of generality, treat $L_0 = L$ and $M = T^*L$. 
		
		\item Let $L_2 = TN^{ann}$. The goal now is to show there exists a symplectomorphism $\chi_1:U_1 \subset T^* L \to V_1 \subset T^* L_2$ where both the domain and range are neighborhoods of $N$ such that
		\begin{enumerate}
			\item $\chi_1|_{L_2 \cap U_1} = \id$ 
			
			\item $\chi_1(L \cap U_1) \subset T^*L_2|_N$
			
			\item $\chi_1(L_1 \cap U_1) = \Gamma_\alpha$, the graph of a 1-form $\alpha$ on $L_2$.
		\end{enumerate}
		\item Assuming $\chi_1$ exists, note that $N \subset \Gamma_\alpha$ which implies that $\alpha_N = 0$. Then for $x \in L_2 \cap U_1$, the map 
		\[\psi_\alpha:T^*_x L_2 \to T^*_x L_2, \beta \mapsto \beta - \alpha(x) \]
		
		is a symplectomorphism. We may choose a sufficiently small neighborhood $V_2$ of $N$ in $T^*L_2$ so that $\psi_\alpha(V_2) \subset \chi_1(U_1)$.
		
		\item Let $\phi:U_0 \to V_0$ be defined by $\phi(x) = \chi^{-1}_1 \circ \psi_\alpha \circ \chi_1(x)$. Letting $U_0 = \chi^{-1}_1(V_2)$, we can check that $\phi$ satisfies each of the properties we want.
	\end{enumerate}
	
	So now, we need to show that $\chi_1$ exists.
	
	\begin{enumerate}
		\item Note that we need only show that for a defined map $\chi_1$, $\chi_1(L_1)$ should be transverse to the fibers of $T^* L_2$ in order for the image to be a graph.
		
		\item Let $E = \ker d \tau$ be the vertical subbundle of $TL$ (recall that $L = TN^\perp$). For $x \in N$,
		\[T_x(T^*L) = T_x L \oplus T^*_x L = E_x \oplus T_x N \oplus T_x N^{ann} \oplus E^{ann}_x. \]
		
		\item It is straightforward to show that $T_x L_1 \cap (E_x \oplus E^{ann}_x) = 0$. Then, $L_1$ is transverse to $E^{ann}_x$. If show $\chi_1(E^{ann} \cap U_1) \subset T^*_x L_2$, then we'll have shown that $\chi_1(L_1)$ is transverse to the fibers of $T^* L_2$.
		
		\item We need two lemmas:
		\begin{enumerate}
			\item There exists a vector bundle $\sigma:T^*L \to TN^{ann}$ with fibers as Lagrangian submanifolds of $T^*L$. In particular, $\sigma^{-1}(x) = E^{ann}_x$ for all $x \in N$.
			
			The proof mainly involves checking that a proposed $\sigma$ does have a vector bundle structure.
			
			\item Let $\sigma:V \to L$ be a vector bundle such that $(V,\omega_0)$ is a symplectic manifold and the fibers $V_x$ are Lagrangian. Then for every compact set $K \subset L$, there is a fiber preserving symplectomorphism $\chi$ defined in a neighborhood of $U$ of $K$ in $V$; $\chi:(U,\omega_0) \to (T^*L,\omega)$ where $\omega$ is the standard symplectic form on $T^*L$. Moreover, $\pi_L \circ \chi = \sigma|_U$ and $\chi|_L = \id$.
			
			The proof involves a Moser-type argument to show the fiber preserving property.
			
		\end{enumerate}
		\item The two lemmas immediately show the existence of $\chi_1$. $\chi|_L = \id$ gives (a), $\pi_L \circ \chi = \sigma|_U$ gives (b), and the fiber preserving property gives (c). \qed
	\end{enumerate}
	
	\subsection{Morse and Floer Data Coincide}
	
	The second result of Pozniak's results shows that a $C^1$ small Morse function allows us to identify Morse and Floer critical points and flow lines.
	
	\begin{theorem} \label{Morse}
		Let $(N,g_N)$ be a compact, Riemannian manifold, $\tau:L \to N$ a vector bundle over $N$ and $f:N \to \R$ a $C^2$ function on $N$. Let $\pi:T^*L \to L$, $f_L = f \circ \tau$, and $H = f \circ \tau \circ \pi$. We can construct a metric $g$ on $L$ by lifting $g_N$ (see p. 81-82 for details). Let $J = J_g$ be the associated almost complex structure defined using $d\lambda$ and $g$ ($\lambda$ is the canonical 1-form on $T^*L$). We also suppose there is a neighborhood $U$ of $N$ in $L$ such that $\|\nabla^g df_L(x)\| \leq 1$ for all $x \in U$. Then the following holds:
		
		\begin{enumerate}
			\item All critical points and gradient lines with respect to $J$ for the action functional $\mathcal{A}_H$ in $\Omega(\pi^{-1}(U), U, TN^{ann})$ are $t$-independent and so they are in 1-1 correspondence with the critical points and the gradient lines of $f$ with respect to $g_N$.
			
			\item The critical points of $\mathcal{A}_H$ are nondegenerate if $f$ is a Morse function. In this case, if $x^\pm \in Crit(f)$ and $u:\R \to N$ is a t-independent element of $\mathcal{P}(x^-,x^+)$, then the linearized operator $D_{J,H}(u)$ is onto if and only if the operator $D_f(u): W^{1,p}(u^* TN) \to L^p(u^*TN)$, $D_f(u) \xi = \nabla_s \xi + \nabla_\xi \nabla f(u)$ is onto and the assignment $\xi \mapsto \xi'(s,t) = \xi(s)$ gives the isomorphism $\ker D_f(u) \cong \ker D_{J,H}(u)$.
		\end{enumerate}
	\end{theorem}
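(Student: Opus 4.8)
The plan is to exploit the very special structure of the data: the Hamiltonian $H=f\circ\tau\circ\pi$ is pulled back from the base, and the almost complex structure $J=J_g$ is built from a metric lifted from $(N,g_N)$. Everything should therefore decouple into a ``base'' motion along $N$ and a ``fiber'' motion that the $C^1$-smallness hypothesis $\|\nabla^g df_L\|\le 1$ forces to be trivial. First I would pin down the Hamiltonian chords, i.e.\ the critical points of $\mathcal{A}_H$. Since $H=\pi^*f_L$, Lemma \ref{cotflow} tells us that $\phi^H_t$ is fiberwise translation by $-t\,df_L$, so the flow of a point $(\ell,0)$ on the zero section $L$ ends at $(\ell,-df_L(\ell))$. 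Requiring this endpoint to lie in $TN^{ann}$ forces $\ell\in N$ and $df(\ell)=0$, while $df_L(\ell)=df(\ell)\circ d\tau$ vanishes identically at such $\ell$ (it is zero on $T_\ell N$ by criticality and on the $\tau$-fibers because $d\tau$ kills them). Hence every chord is the \emph{constant} path at a critical point of $f$, giving at once the $t$-independence of critical points and their bijection with $\mathrm{Crit}(f)$.

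Next I would write the Floer equation $\partial_s u+J(u)(\partial_t u-X_H(u))=0$ in the horizontal/vertical splitting of $T(T^*L)$ determined by the Levi-Civita connection of $g$, using $g$ to identify $TL\cong T^*L$. Because $f_L$ is pulled back from $N$ and the metric is lifted, $X_H$ is purely vertical and the equation becomes a coupled first-order system for the base component $q$ and the fiber component $p$; in the flat model it takes the form $\partial_s q=-\partial_t p-\nabla f_L(q)$ and $\partial_s p=\partial_t q$, with boundary data $p(s,0)=0$ and $u(s,1)\in TN^{ann}$. Such a solution is $t$-independent precisely when $p\equiv 0$ and $q(s)\in N$ solves $\partial_s q=-\nabla^{g_N}f(q)$, i.e.\ exactly a gradient trajectory of $f$ on $N$; this already identifies the $t$-independent solutions with gradient lines and supplies the map $\xi\mapsto\xi'(s,t)=\xi(s)$.

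The main obstacle is to show that \emph{every} solution is $t$-independent, and this is where the hypothesis $\|\nabla^g df_L\|\le 1$ is essential. Writing the mixed Lagrangian boundary conditions in the splitting, one finds that the $t$-direction operator $-\partial_t^2$ on the fiber- and normal-directions carries Dirichlet or Dirichlet--Neumann conditions, whose lowest eigenvalue is $\ge(\pi/2)^2=\pi^2/4$, while the only zero mode is the $TN$-valued constant mode---the Morse direction along $N$. I would therefore establish a Wirtinger / maximum-principle estimate for the $t$-dependent part of $u$: its linearization is $-\partial_t^2$ plus a zeroth-order term controlled by $\|\nabla^g df_L\|\le 1<\pi^2/4$, so the spectral gap dominates the coupling and forces the $t$-dependent part to vanish. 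Making this rigorous---passing from the spectral statement to an honest nonlinear a priori estimate that confines solutions to the region where the flat model dominates, and absorbing the curvature terms coming from the lifted metric on the isolating neighborhood $\pi^{-1}(U)$---is the technical heart of the argument.

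Finally, for part (2) the same spectral decomposition does the work. I would compute the Hessian of $\mathcal{A}_H$ at a constant chord and check that, modulo the invertible nonconstant $t$-modes, it reduces to $\text{Hess}_x f$; hence the chord is nondegenerate iff $x$ is a nondegenerate critical point of $f$, i.e.\ iff $f$ is Morse. For the linearized operators, decomposing sections over the strip into $t$-modes splits $D_{J,H}(u)$ as the Morse operator $D_f(u)\xi=\nabla_s\xi+\nabla_\xi\nabla f(u)$ on the zero mode plus operators on the nonconstant modes that are invertible by the same gap $\pi^2/4>1$. Therefore $D_{J,H}(u)$ is onto iff its zero-mode part $D_f(u)$ is onto, and the inclusion of $t$-independent sections gives the isomorphism $\ker D_f(u)\cong\ker D_{J,H}(u)$, completing the correspondence.
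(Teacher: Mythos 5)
Your skeleton matches the structure of Pozniak's proof, which is the one the paper sketches: you identify the Hamiltonian chords via the fiber-translation flow of $H=\pi^*f_L$ (your argument here is correct and is essentially the paper's step 1), you split the Floer equation into base and fiber components, and you play the hypothesis $\|\nabla^g df_L\|\le 1$ against the length of the $t$-interval to kill the fiber part. But the step on which the whole theorem rests --- showing that \emph{every} Floer solution, and in part (2) every element of $\ker D_{J,H}(u)$ and of the cokernel, is $t$-independent --- is precisely the step you defer, and the route you sketch for it would not work as stated. A solution of the nonlinear Floer equation cannot be decomposed into eigenmodes of $-\partial_t^2$ in any way that diagonalizes the equation: the zeroth-order term and the curvature corrections of the lifted metric couple all modes, so ``the spectral gap dominates the coupling'' is a heuristic, not an estimate. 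You acknowledge this yourself when you call it the technical heart of the argument; unfortunately that heart is exactly what a proof of this theorem has to contain.

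The device that turns your spectral heuristic into a proof --- and which Pozniak's argument supplies --- is a convexity argument on the $t$-integrated vertical component, not a mode decomposition. One first shows that the vertical part $y(s,t)$ of a solution satisfies the elliptic equation $\Delta y - \langle \nabla df_L, \nabla_s y\rangle = 0$, and then that $\gamma(s):=\tfrac12\int_0^1 |y(s,t)|^2\,dt$ satisfies $\gamma''(s)\ge 0$. Your Wirtinger inequality is exactly what enters here: integrating by parts and using the boundary conditions, $\gamma''\ge \|\nabla_s y\|^2+\|\nabla_t y\|^2-\|y\|\,\|\nabla_s y\|$ (norms in $L^2(dt)$), and the Poincar\'e bound $\|\nabla_t y\|^2\ge \tfrac{\pi^2}{4}\|y\|^2$ together with $\pi^2/4>1$ makes this quadratic form nonnegative. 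Since $\gamma\ge 0$ and $\gamma(s)\to 0$ as $s\to\pm\infty$, convexity forces $\gamma\equiv 0$, i.e.\ $y\equiv 0$; no mode analysis is needed. The identical trick applied to the vertical component $\eta$ of a kernel element of the linearized operator (whose coefficients are $t$-independent along a $t$-independent $u$) gives $\eta\equiv 0$ and hence $\ker D_f(u)\cong\ker D_{J,H}(u)$, and surjectivity is obtained by running the same argument on the formal adjoint to get $\ker D_f^*(u)\cong\ker D_{J,H}^*(u)$ --- a step your proposal does not address at all. Finally, for nondegeneracy of the constant chords you do not need any spectral analysis of the Hessian of $\mathcal{A}_H$: nondegeneracy is the finite-dimensional transversality condition $D\phi_1(x)(T_xL)\cap T_x(TN^{ann})=\{0\}$, which a direct computation of $D\phi_1$ reduces to nondegeneracy of $\mathrm{Hess}_q f$. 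So: right mechanism, right use of the hypothesis, but the lemma that actually closes the argument is missing, and the replacement you propose for it is not viable in the nonlinear setting.
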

	
	\noindent \textbf{Remark:} Note that once a metric and function are fixed on $N$, Pozniak gives a \textit{specific} metric and almost complex structure on $T^*L$, rather than take generic pairs. Despite the non-genericity, the second part of the result asserts that we still have smooth moduli spaces. \\
	
	\noindent \textit{Sketch of Pozniak's proof:}
	
	\begin{enumerate}
		\item There exists local coordinates $x = (q,q',p,p')$ on $T^*L$ such that $\frac{\partial H}{\partial p} = \frac{\partial H}{\partial p'} = \frac{\partial H}{\partial q'}=0$ and $X_H(0,0,df(q),0)$. So the Hamiltonian flow is $\phi_t(q,q',p,p') = (q,q',p+tdf(q),p')$. 
		
		Now consider paths $\gamma$ with boundary conditions $\gamma(0) \in L$ and $\gamma(1) \in TN^{ann}$. When $x \in L$, $p=p'=0$ and when $x \in TN^{ann}$, $q' = p + df(q) = 0$. Hence, the only Hamiltonian paths are constant: $x(t) = (q,0,0,0)$ with $q$ being a critical point of $f$.
		
		\item Suppose that $D^2f(q) :=\text{Hess}_q f$ is nondegenerate. Then in these coordinates,
		\[D \phi_1(x) =
		\begin{pmatrix}
			I & 0 & 0 & 0 \\
			0 & I & 0 & 0 \\
			D^2 f(q) & 0 & I & 0 \\
			0 & 0 & 0 & I
		\end{pmatrix} \]
		
		Let $v = (Q,Q',0,0)$ be a vector tangent to $L$. Then, $D \phi_1(x) v = (Q,Q',D^2 f(q) Q, 0)$ is tangent to $TN^{ann}$ if and only if $Q' = D^2f(q) Q = 0$. In this case, $Q = 0$ as well as $D^2 f(q)$ is nondegenerate. So $Q = Q' = 0$. Hence, $D \phi_1(x) (T_x L) \cap T_x TN^{ann} = \{0\}$. Therefore, $x$ is nondegenerate as a critical point of the action functional $\mathcal{A}_H$.
		
		\item Let $g^D$ be the Kaluza-Klein metric on $T^*L$ which is a ``diagonal'' lift of $g$. The important feature of $g^D$ is that it is compatible with the canonical symplectic structure and $J$ that we've defined. Let $V = \ker d \pi$ be the vertical subbundle of $T(T^*L)$. Then $dH = df_L \circ d \pi$ vanishes on $V$ which means $\nabla H$ with respect to $g^D$ is in the horizontal subspace: $\nabla H(\xi) \in H_\xi$.
		
		Moreover, $d \pi |_{H_\xi}$ is an isometry so $d \pi (\nabla H(\xi)) = \nabla^g f_L(\pi(\xi))$ which tells us that $\nabla H$ is a horizontal lift of $\nabla^g f_L$. Thus, if $x \in N$, then $\nabla H(x) = \nabla^(g_N) f(x) \in TN$. This means that if $u:\R \to N$ is a gradient line of $f$, then $v(s,t) := u(s)$ is a gradient line of $\mathcal{A}_H$ satisfying our boundary conditions. 
		
		\item Suppose now that $v:\R_s \times I_t \to U$ satisfies the Floer equation. We want to show that $v$ is $t$-independent. Let $x(s,t)$ and $y(s,t)$ be the horizontal and vertical components of $v(s,t)$.  Then $\frac{\partial v}{\partial s}$ and $\frac{\partial v}{\partial t}$ also decompose into horizontal and vertical components, giving a new form of the Floer equation:
		\[\frac{\partial x^*}{\partial s} - \nabla_t y + dH_1(x) = 0, \hspace{5mm} \nabla_s y + \frac{\partial x^*}{\partial t} = 0. \]
		
		Here, $*$ means the dual using the metric $g$. Of course, there are also boundary conditions for these equations. We wish to show that $y \equiv 0$ and hence, $\frac{\partial x}{\partial t}$ which implies that $v(s,t) = x(s):\R \to N$ is a gradient line of $f$.
		
		\item Define 
		\[\gamma(s) := \frac{1}{2}\int^1_0 |y(s,t)|^2 \, dt. \]
		
		Note that $\lim_{s \to \pm \infty} \gamma(s) = 0$ and also $\gamma \geq 0$. So it attains a maximum on $\R$. However, Pozniak showed the following lemma: if $\|\nabla^g f_L \|_{L^\infty} < 1$, then $\gamma''(s) \geq 0$ for all $s \in \R$. When the hypothesis holds, this means that $\gamma$ is both concave up everywhere but also achieves a maximum. This implies that $\gamma$ must be constant and in fact, $\gamma \equiv 0$ because it limits to 0. Hence $y \equiv 0$.
		
		The proof of this lemma requires the crucial fact: $y$ is a solution to the elliptic equation $\Delta y - \langle \nabla df_L, \nabla_s y \rangle = 0$.
		
		\item To show $\ker D_f (u) \cong \ker D_{J,H}(u)$, we similarly decompose a vector field $\xi=(\zeta,\eta) \in \Gamma(u^* T(T^*L))$ into horizontal and vertical components and obtain a way of writing the linearized Floer equation in these horizontal and vertical components.
		
		We may show that $\zeta \in \ker D_f(u)$ is a solution to the linearized Floer equation and hence in $D_{J,H}(u)$. Conversely, let $\xi = (\zeta,\eta) \in D_{J,H}(u)$. If we similarly define
		\[\gamma_1(s) := \frac{1}{2}\int^1_0 |\eta(s,t)|^2 \, dt, \]
		
		we may prove as Pozniak did that when $\|\nabla^g df_L \|_{L^\infty} < 1$, then $\gamma_1''(s) \geq 0$ for all $s$. Hence, $\eta \equiv 0$ and $\xi = (\zeta,0)$ is in $\ker D_f(u)$. 
		
		\item Lastly, similar arguments show that $\ker D^*_f(u) \cong \ker D^*_{J,H}(u)$ and so $D_f(u)$ is onto if and only if $D_{J,H}(u)$ is onto. \qed
		
	\end{enumerate}
	
	\subsection{Pozniak's Main Theorem}
	
	Finally, we state the main result from Pozniak's thesis about the local Floer homology of clean intersections.
	
	\begin{theorem} \label{Pmain_thm}
		Let $(M,\omega)$ be a symplectic manifold and $L_0,L_1$ two Lagrangian submanifolds that intersect cleanly along a compact, connected submanifold $N$. Fix base point $x_0 \in N$. If $U$ is any relatively compact neighborhood of $N$ such that
		
		\begin{enumerate}
			\item Aside from those in $N$, there are no other critical points of $\mathcal{A}$ in the connected component $P(U,L_0,L_1,x_0)$ of the constant path $x_0$ in the path space $P(U,L_0,L_1)$.
			
			\item The action function of $\omega$ is well-defined in $P(U,L_0,L_1,x_0)$, meaning, we do not need to lift to the universal cover.	
		\end{enumerate}
		
		Then $\mathcal{U} = P(U,L_0,L_1,x_0)$ is an isolating neighborhood and $\mathcal{S}_{J,0}(\mathcal{U}) = N$ for any almost complex structure $J$. There exists an almost complex structure $J_0$ and a Hamiltonian $H_0:M \to \R$ such that
		
		\begin{enumerate}
			\item $\mathcal{S}_{J_0,H_0}(\mathcal{U})$ is a continuation of $N$.
			
			\item $(J_0,H_0)$ is a regular pair and if $g_N = g_J|_N$, $f = H_0|_N$, then $(g_N,f)$ is Morse-Smale.
			
			\item The Floer complex $CF_*(\mathcal{U},J_0,H_0)$ coincides with the Morse complex $CM_*(N,g_N,f)$ and thus
			\[HF_*(\mathcal{U},\Z_2) \cong H^{sing}_*(N,\Z_2).\]
		\end{enumerate}
	\end{theorem}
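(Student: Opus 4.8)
The plan is to assemble the three ingredients already established---the normal form of Theorem \ref{normal_form}, the Morse--Floer comparison of Theorem \ref{Morse}, and the continuation-invariance of Proposition \ref{lag_invariance}---into a single computation, after first disposing of the unperturbed ($H=0$) claim directly. So the scheme is: show $\mathcal{U}$ is isolating with $\mathcal{S}_{J,0}(\mathcal{U}) = N$; then pass to the standard model; then introduce a rescaled Morse--Smale perturbation to which Theorem \ref{Morse} applies; and finally invoke invariance.

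First I would verify that $\mathcal{U} = P(U, L_0, L_1, x_0)$ is isolating with $\mathcal{S}_{J,0}(\mathcal{U}) = N$. With $H = 0$ the critical points of $\mathcal{A}$ are the constant paths sitting at points of $L_0 \cap L_1$, and by hypothesis (1) the only such points in the component of $x_0$ lie in $N$. Since $N$ is connected and the action is honestly single-valued on $\mathcal{U}$ by hypothesis (2), the critical value is constant along $N$. The maximal invariant set is the image of genuine $J$-holomorphic strips whose slices remain in $\mathcal{U}$ for all $s$; by Gromov compactness and isolation, any such strip limits at both ends to critical points of equal action, so its energy is the action difference, namely zero. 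Hence every such strip is constant, giving $\mathcal{S}_{J,0}(\mathcal{U}) = N$ for every $J$, and the closure of this set is the compact $N$ lying in the interior of $\mathcal{U}$.

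Next I would invoke Theorem \ref{normal_form} to replace a neighborhood of $N$ in $M$ by a neighborhood of the zero section in $T^*L$ for the vector bundle $\tau : L \to N$, identifying $L_0$ with the zero section $L$ and $L_1$ with $TN^{ann}$. On $N$ I choose a metric $g_N$ and a Morse function $f$ with $(g_N, f)$ Morse--Smale, then rescale $f$ so that the lift $f_L = f \circ \tau$ satisfies $\|\nabla^g df_L\| \leq 1$ on a neighborhood $U$ of $N$ and simultaneously so that $H_0 = f \circ \tau \circ \pi$ is $C^1$-small. Setting $J_0 = J_g$ to be Pozniak's associated almost complex structure, Theorem \ref{Morse}(1) identifies the critical points and gradient lines of $\mathcal{A}_{H_0}$ in $\Omega(\pi^{-1}(U), U, TN^{ann})$ with those of $f$ on $N$, all $t$-independent; part (2), together with the Morse--Smale hypothesis, gives nondegeneracy of the critical points and surjectivity of the linearized Floer operators through the kernel isomorphism $\ker D_f(u) \cong \ker D_{J_0,H_0}(u)$, so $(J_0, H_0)$ is a regular pair. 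It follows that $CF_*(\mathcal{U}, J_0, H_0)$ literally coincides with $CM_*(N, g_N, f)$, whose homology over $\Z_2$ is $H^{sing}_*(N; \Z_2)$. Since $H_0$ is $C^1$-small, Proposition \ref{lag_invariance} then ensures $\mathcal{S}_{J_0,H_0}(\mathcal{U})$ is a continuation of $\mathcal{S}_{J,0}(\mathcal{U}) = N$, so the local homology is unchanged and $HF_*(\mathcal{U}, \Z_2) \cong H^{sing}_*(N, \Z_2)$.

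The main obstacle I anticipate is the first step: proving rigorously that the maximal invariant set for $H = 0$ collapses to the constant strips over $N$, that is, ruling out nonconstant holomorphic strips trapped in $\mathcal{U}$. This is precisely where hypotheses (1) and (2) are indispensable, since without a single-valued, bounded action one cannot run the energy identity, and it is the only place demanding genuine analysis rather than assembly, as the heavy Morse--Floer comparison is already packaged into Theorem \ref{Morse}. A secondary subtlety is confirming that one rescaling of $f$ can meet both the $\|\nabla^g df_L\| \leq 1$ hypothesis of Theorem \ref{Morse} and the $C^1$-smallness hypothesis of Proposition \ref{lag_invariance} at once, but this is routine because shrinking $f$ achieves both simultaneously.
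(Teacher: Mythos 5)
Your overall architecture (unperturbed case via the energy identity, then standard model, then Theorem \ref{Morse}, then Proposition \ref{lag_invariance}) is the same as Pozniak's, and your treatment of the $H=0$ statement is essentially correct. But there is a genuine gap at precisely the step you dismiss as ``assembly'': the assertion that $CF_*(\mathcal{U},J_0,H_0)$ \emph{literally coincides} with $CM_*(N,g_N,f)$. The two complexes are built from different confinement conditions. The local Floer complex counts critical points of $\mathcal{A}_{H_0}$ and trajectories whose slices stay in $\mathcal{U}=P(U,L_0,L_1,x_0)$, where $U$ is the \emph{given, arbitrary} relatively compact neighborhood from the hypotheses; Theorem \ref{Morse} identifies Floer data with Morse data only for paths and trajectories lying in $\Omega(\pi^{-1}(U'),U',TN^{ann})$, where $U'$ is the (possibly much smaller) neighborhood inside the standard model of Theorem \ref{normal_form} on which the coordinates exist and the bound $\|\nabla^g df_L\|\le 1$ holds. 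You have silently conflated these two neighborhoods. A priori there could be critical points of $\mathcal{A}_{H_0}$, or Floer trajectories, that remain in $\mathcal{U}$ but wander outside $U'$; these would contribute generators or differential terms invisible to the Morse complex, and your regularity claim would fail for them as well, since Theorem \ref{Morse}(2) only controls the $t$-independent trajectories inside $U'$. Neither part of Proposition \ref{lag_invariance} closes this gap: the first part gives isolation of $\mathcal{S}_{J_0,H_0}(\mathcal{U})$ in $\mathcal{U}$, which is weaker than containment in $U'$, and the second part cannot transfer homology from the unperturbed problem because $(J_0,0)$ is not a regular pair when $\dim N>0$.

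This missing confinement step is exactly what the paper identifies as the main content of Pozniak's proof. It is proved by contradiction: if no $\epsilon$ works, there are Hamiltonians $H_n$ with $\|H_n\|_{C^1}<\epsilon_n\to 0$ and Floer trajectories $u_n$ confined to $\mathcal{U}$ but escaping $U'$; using the action estimate $|\mathcal{A}_0(\gamma)-\mathcal{A}_0(x)|\le\|\dot{\gamma}\|^2_{L^2}$ (Pozniak's Lemma 3.4.5), the energy $E(u_n)$ is bounded by quantities controlled by $\|H_n\|_{C^1}$, so $E(u_n)\to 0$, forcing $u_n$ to converge to a constant path in $N$ and contradicting the escape from $U'$. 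Note that your zero-energy argument for $H=0$ does not extend directly: once $H_0\ne 0$, the confined trajectories have small but nonzero energy, so pointwise vanishing cannot be concluded and one genuinely needs this quantitative limiting argument (the same compactness reasoning is also what confines the perturbed critical points near $N$). Adding this step would complete your proof along the paper's lines.
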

	
	For the main theorem, Theorem \ref{Pmain_thm}, Pozniak first assumes we're in the setting of the standard form for a clean intersection. His theorem \ref{Morse} shows that for a $C^1$ small Morse function $f$, under a canonical metric $g$, almost complex structure $J$, and Hamiltonian $H$ on $T^*L$ defined from $f$, the Floer and Morse critical points and flow line all live within a small enough neighborhood $U$ of $N$ and they all coincide.
	
	Hence, the proof of the main theorem is mainly showing that for any chosen isolated neighborhood $U$ of $N$ which sits inside the neighborhood of our standard form for the clean intersection, there exists $\epsilon$ such that when $\|H\|_{C^1} < \epsilon$, the Floer critical points and trajectories are contained within $U$. \\
	
	\noindent \textit{Sketch of Pozniak's proof:} Given an isolated neighborhood $U$, suppose that there is no such $\epsilon$; that is, there is a positive sequence $\epsilon_n \to 0$ and Hamiltonians $H_n$ satisfying $|H_n|_{C^1} < \epsilon_n$ such that for each $n$, there is some Floer trajectories $u_n$ that leaves the neighborhood $U$.
	
	As a reminder, $u_n$ satisfies:
	\[\frac{\partial u_n}{\partial s} + J_n(u_n) \frac{\partial u_n}{\partial t} + \nabla H_n(u_n) = 0 \] 
	
	\noindent plus some boundary and limiting conditions. Then we may write the energy as
	
	\[E(u_n) = \mathcal{A}_0(x^-_n) - \mathcal{A}_0(x^+_n) + \int^1_0 H_n(x^+_n) - H_n(x^-_n) \, dt \]
	
	Pozniak proves a useful lemma 3.4.5: $|\mathcal{A}_0(\gamma)- \mathcal{A}_0(x)| \leq \|\dot{\gamma}\|^2_{L^2}$. With the lemma, he showed that $\|\dot{x}^+_n\| \leq \|H_n\|_{C^1} \leq \epsilon_n$ which bounds the first term. Similarly, there is a bound for the second term. The terms within the integral are bounded above by the $C^0$ norms of $H_n$ which are in turn, also bounded by $\epsilon_n$. Therefore, $E(u_n) \to 0$. This shows that $u_n \to u_0 \equiv const$. Moreover, $u_0$ must be in $N \subset U$. This contradicts the fact that the $u_n$ all leave the neighborhood $U$ at some point. \qed
	
	\section{Adapting Pozniak's Results} \label{adapt}
	
	For manifolds with boundary, it is customary to define the tangent space in such a way that even on the boundary, the tangent spaces are the same dimension as the interior. Thus, we'll keep the same definition: 
	
	\begin{definition}
		Two Lagrangians $L_0,L_1$ intersect cleanly along a submanifold with or without boundary $\Sigma$ if for all $x \in \Sigma$, $T_x \Sigma = T_x L_0 \cap T_x L_1$. 
	\end{definition}
	
	\noindent We now state a generalization of Theorem \ref{normal_form}. 
	
	\begin{theorem} \label{main_thm}
		Let $(M,\omega)$ be a symplectic manifold and $L_0,L_1$ two Lagrangian submanifolds. Suppose that $C \subset L_0 \cap L_1$ is a Lagrangian quasi-minimally degenerate set. There exists a perturbation for $C$ resulting in a submanifold $\Sigma$ with boundary which deformation retracts to $C$. Fix base point $x_0 \in \Sigma$. If $U$ is any relatively compact neighborhood of $\Sigma$ such that
		
		\begin{enumerate}
			\item There are no critical points of $\mathcal{A}$ other than those in $\Sigma$ in the connected component $P(U,L_0,L_1,x_0)$ of $x_0$ in the path space $P(U,L_0,L_1)$.
			
			\item The action function of $\omega$ is well-defined in $P(U,L_0,L_1,x_0)$.	
		\end{enumerate}
		
		\noindent Then $\mathcal{U} = P(U,L_0,L_1,x_0)$ is an isolating neighborhood and $\mathcal{S}_{J,0}(\mathcal{U}) = \Sigma$ for any almost complex structure $J$. There exists an almost complex structure $J_0$ and a Hamiltonian $H_0:M \to \R$ such that
		
		\begin{enumerate}
			\item $\mathcal{S}_{J_0,H_0}(\mathcal{U})$ is a continuation of $\Sigma$.
			
			\item $(J_0,H_0)$ is a regular pair and if $g_\Sigma = g_J|_\Sigma$, $f = H_0|_\Sigma$, then $(g_\Sigma,f)$ is a Morse-Smale.
			
			\item The Floer complex $CF_*(\mathcal{U},J_0,H_0)$ coincides with the Morse complex $CM_*(\Sigma,g_\Sigma,f)$ and thus
			\[HF_*(\mathcal{U},\Z_2) \cong H^{sing}_*(\Sigma,\Z_2).\]
		\end{enumerate}
	\end{theorem}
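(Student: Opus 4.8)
The plan is to deduce Theorem \ref{main_thm} from Pozniak's Theorem \ref{Pmain_thm} in two movements: first replace the quasi-minimally degenerate pair by a genuinely \emph{clean} pair intersecting along the thickening $\Sigma$, and then rerun Pozniak's three-step machinery (Theorems \ref{normal_form}, \ref{Morse}, \ref{Pmain_thm}) while controlling the new boundary $\partial\Sigma$.

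First I would set up the reduction. Writing $\Lambda = L_0$ and $L = L_1$, Definition \ref{LMD} furnishes a Hamiltonian $K$ for which $C$ is flattened degenerate along $S$ with respect to $L_0$ and $\phi^K_1(L_1)$, with $C$ isolated in $L_0 \cap \phi^K_t(L_1)$ for all $t$. Applying Theorem \ref{perturb} to this flattened-degenerate pair yields a $C^1$-small Hamiltonian isotopy $Q_s$ with $Q_0 = \phi^K_1(L_1)$, a fixed isolating neighborhood $V$ of $C$ valid for every $s \in [0,1]$, and a thickening $\Sigma \subset S$, a codim $0$ submanifold-with-boundary satisfying $Q_1 \cap L_0 = \Sigma$ with $C \hookrightarrow \Sigma$ a homotopy equivalence. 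By the construction of $\Sigma$ in Theorem \ref{perturb} the Lagrangian $Q_1$ is tangent to $L_0$ to first order along $\Sigma$, so $T_x\Sigma = T_x L_0 \cap T_x Q_1$ for every $x \in \Sigma$; that is, $L_0$ and $Q_1$ intersect cleanly along the manifold-with-boundary $\Sigma$. Since the composite isotopy carrying $L_1$ to $\phi^K_1(L_1)$ and then through the family $Q_s$ keeps the intersection inside the fixed isolating neighborhood $V$, Proposition \ref{lag_invariance} identifies the local Floer homology $HF_*(\mathcal{U})$ of $(L_0,L_1)$ with that of the clean pair $(L_0,Q_1)$ near $\Sigma$. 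This reduces the theorem to the local Floer homology of a clean intersection along $\Sigma$.

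Next I would adapt Pozniak's chain of results to $\Sigma$. As remarked after Theorem \ref{normal_form}, that normal-form argument uses only tubular-neighborhood and Moser constructions and not closedness of the clean intersection, so I would extend $\Sigma$ across its boundary to a slightly larger open clean intersection $\hat\Sigma$, apply Theorem \ref{normal_form} to $\hat\Sigma$, and restrict to obtain the standard model $\phi\colon U_0 \to V_0 \subset T^*L$ over $\Sigma$. With the canonical metric $g$ and almost complex structure $J = J_g$ of Theorem \ref{Morse}, I would choose a $C^1$-small Morse function $f$ on $\Sigma$ whose critical points all lie in the interior and whose negative gradient $-\nabla^{g_\Sigma} f$ points strictly inward along $\partial\Sigma$; the natural choice is a small Morse perturbation of the defining function $f_1$ of the thickening, for which $\Sigma = f_1^{-1}([0,\delta/2])$ and $-\nabla f_1$ already points inward on the regular level set $\partial\Sigma$, and which is automatically $C^1$-small because $df_1$ vanishes on $C$. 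With this inward-pointing convention the negative gradient flow has empty exit set, so the Morse complex $CM_*(\Sigma,g_\Sigma,f)$ computes the \emph{absolute} homology $H_*(\Sigma;\Z_2)$, which by the equivalence $C \hookrightarrow \Sigma$ agrees with $H_*(C;\Z_2)$. The local Morse--Floer comparison of Theorem \ref{Morse} (all action critical points and gradient lines are $t$-independent, match those of $f$, and have surjective linearizations) is insensitive to the boundary and applies on the interior of $\Sigma$, while the inward gradient guarantees that no such trajectory reaches $\partial\Sigma$; then Pozniak's compactness/escape estimate from the proof of Theorem \ref{Pmain_thm} forces, for $C^1$-small $H_n \to 0$, any escaping strips to converge to constant strips lying in the clean intersection $\Sigma$. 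Combining this with the isolation provided by $V$, the set $\mathcal{S}_{J_0,H_0}(\mathcal{U})$ is a continuation of $\Sigma$ that stays interior, $(J_0,H_0)$ may be taken regular, and $CF_*(\mathcal{U},J_0,H_0) = CM_*(\Sigma,g_\Sigma,f)$, whence $HF_*(\mathcal{U};\Z_2) \cong H_*(\Sigma;\Z_2)$.

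The hard part will be precisely the two places where the boundary genuinely intervenes. I expect the main obstacle to be verifying that the inward-pointing gradient condition simultaneously (a) pins the Morse homology of $f$ on $\Sigma$ to the absolute homology $H_*(\Sigma)$ rather than a relative version $H_*(\Sigma,\partial\Sigma)$, and (b) survives Pozniak's limiting argument, so that for small $H$ no Floer strip can slip out through $\partial\Sigma$ and the limiting constant strips are trapped in $\Sigma$ itself and not merely in the auxiliary enlargement $\hat\Sigma$. Once escape through $\partial\Sigma$ is ruled out, the remaining symplectic normal-form and elliptic analysis are Pozniak's and carry over unchanged.
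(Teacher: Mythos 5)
Your first movement---using Definition \ref{LMD} and Theorem \ref{perturb} to trade $(L_0,L_1)$ for a pair intersecting cleanly along the thickening $\Sigma$, and Proposition \ref{lag_invariance} to see that this costs nothing in local Floer homology---is exactly how the paper's proof begins. The gap is in your second movement, at the one step that is supposed to defuse the boundary problem: you cannot ``extend $\Sigma$ across its boundary to a slightly larger open clean intersection $\hat\Sigma$.'' A clean intersection is, by the paper's definition, a subset of the actual intersection of the two Lagrangians, and Theorem \ref{perturb} produces a pair whose intersection inside the isolating neighborhood is \emph{exactly} $\Sigma$ (for $s<1$ the intersection is $C$; only at $s=1$ does it become $\Sigma$). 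Hence no open manifold $\hat\Sigma \supsetneq \Sigma$ is contained in $L_0 \cap Q_1$, and Theorem \ref{normal_form} cannot be applied to $\hat\Sigma$. Nor can this be repaired by altering $Q_1$ away from $\Sigma$: any Lagrangian agreeing with $Q_1$ near $\Sigma$ still meets $L_0$ exactly in $\Sigma$ there, while the conclusion of Theorem \ref{normal_form} applied to $\hat\Sigma$ would force the intersection near $\hat\Sigma$ to be all of $\hat\Sigma$---a contradiction. Since the entire difficulty of Theorem \ref{main_thm} beyond Theorem \ref{Pmain_thm} is the presence of $\partial\Sigma$, and your standard model over a neighborhood of all of $\Sigma$ (boundary included) is never legitimately produced, the subsequent Morse--Floer comparison has no foundation.

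The paper resolves the boundary issue by moving in the opposite direction: it \emph{shrinks} rather than extends. One takes a slightly smaller perturbation, producing a thickening $\Sigma' \subset \Sigma$ with $\partial\Sigma' \cap \partial\Sigma = \varnothing$; by invariance of local Floer homology under such $C^1$-small changes (Proposition \ref{lag_invariance}) nothing is lost. One then passes to the interior, an open manifold $\widehat\Sigma$, extends only the vector bundle structure---not the intersection---so as to set $L = T\widehat\Sigma^\perp$ and $L_2 = T\widehat\Sigma^{ann}$, and observes that Pozniak's normal form, Theorem \ref{Morse}, and the escape estimate in the proof of Theorem \ref{Pmain_thm} never use compactness of the clean intersection: Morse theory works on open manifolds, and any worry about Floer data near $\partial\Sigma'$ is pushed away by taking yet smaller perturbations, which never changes the homotopy type. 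Your boundary observations---that $-\nabla f_1$ points inward along $\partial\Sigma = f_1^{-1}(\delta/2)$, so the Morse complex computes the absolute homology $H_*(\Sigma;\Z_2) \cong H_*(C;\Z_2)$ rather than $H_*(\Sigma,\partial\Sigma;\Z_2)$, and that one must rule out strips slipping out through $\partial\Sigma$---are genuine concerns and would slot naturally into the corrected (shrinking) argument; but as written they rest on a normal form you never actually obtain.
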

	
	\noindent \textbf{Remark:} Since Pozniak shows $CF_*(\mathcal{U},\Z_2)$ coincides with the Morse complex $CM_*(\Sigma,g_\Sigma,f)$ and since Morse theory works also for open submanifolds, we find that $HF_*(\mathcal{U},\Z_2) \cong H^{sing}_*(\Sigma,\Z_2)$ and in particular, $HF_k(\mathcal{U},\Z_2) = 0$ where $k = \dim \Sigma$.
	
	\begin{proof}
		The brunt of the work falls to Theorem \ref{perturb} which gives the perturbation yielding a thickening $\Sigma$ that deformation retracts onto $C$. So we just need to work with $\Sigma$. In the proof of Theorem \ref{normal_form}, $N$ is boundaryless and Pozniak puts a metric on $L_0$ and lets $L = TN^\perp$. We cannot directly follow suit because our $\Sigma$ has boundary and therefore, $L := T\Sigma^\perp$ would also have boundary. As a result, there cannot be a boundaryless neighborhood of $\Sigma$ in $T^*L$ symplectomorphic to a neighborhood of $\Sigma$ in $M$.
		
		Instead, we first take a slightly smaller perturbation of the QMD set to obtain a submanifold with boundary $\Sigma'$ inside of $\Sigma$. We have that $\partial \Sigma' \cap \partial \Sigma = \varnothing$. Furthermore, when the perturbation is sufficiently small, the local Floer homology is unaffected.
		
		Next, we take the interior of $\Sigma'$ to obtain an open manifold $\widehat{\Sigma}$ and extend the vector bundle structure to $\widehat{\Sigma}$. We then take $L = T\widehat{\Sigma}^\perp$. A similar construction gives us a desired $L_2 = T\widehat{\Sigma}^{ann}$.
		
		Therefore, we have a version of the standard form of clean intersections for an open manifold. Moreover, Pozniak's results do not really rely on the whether $\Sigma'$ is compact so we may use his arguments. For instance, one can do Morse theory on open manifolds. If there's any concern about the Floer data being pathological near the boundary of $\Sigma'$, we can simply take an even smaller perturbation before taking the interior. This never changes the homotopy type and hence, the Morse theoretic data is unchanged.
		
		Thus, to broaden Pozniak's main theorem to the case of minimally degenerate intersections, one can follow his arguments almost verbatim. The Morse and Floer data will coincide and the theorem follows.
	\end{proof}
	
	\section{A Spectral Sequence}
	
	From the results of Pozniak, onef can draw out a spectral sequence analogous to the Morse-Bott spectral sequence. Indeed, in a paper by Paul Seidel \cite{SeidelKnot}, he formulates Pozniak’s result in spectral sequence terms. 
	
	Suppose that $L_0, L_1$ are two Lagrangians with intersection decomposed into $\bigsqcup C_p$ where $C_1,... ,C_r$ are the connected components and are submanifolds. By Pozniak's results, there exist disjoint neighborhoods $U_p$ of the $C_p$ for a Hamiltonian $H$ that is sufficiently $C^1$ small. We may patch together an almost complex structure $J$ from the local data. Once done, each neighborhood $U_p$ has the property that all the relevant Floer theoretic data for $(H,J)$ stay within the $U_p$.
	
	Therefore, there exists a filtration on $CF_*(H)$ induced by the action functional. The filtration is preserved by the differential $\partial$. In more detail, let $x_p \in C_p$ and $a_p = \mathcal{A}(x_p)$ where we now view $x_p$ as a constant path. The $C_p$ are ordered so that $a_1 \leq a_2 \leq ... \leq a_r$. Then, there is a filtration on $CF$ by $CF^p$ which is generated by the critical points inside of $U_1 \cup...\cup U_p$. Clearly, $CF^p/CF^{p-1} = CF(U_p)$ and the homology there is the local Floer homology $HF(L_0,L_1,U_p)$.
	
	This provides a summary for how the usual Floer homology is related to the local Floer homology. One obtains a spectral sequence using this action filtration which converges to the usual Floer homology $HF_*(L_0,L_1)$ with the $E^1$-page being $E^1_{p,q} = HF_{p+q}(L_0,L_1,U_p)$. The local Floer homology is related to singular homology, albeit with some shift in grading.
	
	\subsection{Gradings}
	
	To discuss the shift in the gradings, we'll first recall some facts about the Maslov index. For more detail, one may consult Section 4 of Seidel's paper \cite{SeidelKnot} or the Robbin-Salamon papers that Seidel references \cite{RobbinSalamon1},\cite{RobbinSalamon2}. We will mostly stick to Seidel's notation.
	
	Let $\mathcal{L}(n)$ denote the Lagrangian Grassmannian for $(\R^{2n},\omega_0)$ and consider paths $\gamma,\gamma':I=[0,1] \to \mathcal{L}(n)$. The Maslov index $\mu(\gamma,\gamma')$ assigns values in $\frac{1}{2}\Z$ to these two paths and has some basic properties. Of primary importance to us are:
	
	\begin{enumerate}
		\item $\mu(\gamma,\gamma')$ depends on $\gamma,\gamma'$ only up to homotopy with fixed endpoints.
		
		\item $\mu$ is unchanged if one conjugates both $\gamma$ and $\gamma'$ by a path $\Psi: [0,1]\to Sp(2n, \R)$.
		
		\item $\mu$ is additive under concatenation of paths.
		
		\item $\mu(\gamma,\gamma')$ vanishes if the dimension of $\gamma(t) \cap \gamma'(t)$ is constant.
		
		\item $\mu(\gamma,\gamma') \equiv \frac{1}{2} \dim(\gamma(0) \cap \gamma'(0)) - \frac{1}{2} \dim(\gamma(1) \cap \gamma'(1)) \pmod{1}$.
	\end{enumerate}
	
	Letting $\gamma_x$ denote the constant path at a point $x$, choose a path $I \to P(L_0,L_1)$ which begins at $\gamma_{x_-}$ and ends at $\gamma_{x_+}$. Here, $x_\pm \in L_0 \cap L_1$. Then, this path is described by a map $u:I \times I \to M$ with some obvious Lagrangian boundary conditions. We can assign an index to $u$ in the following way.
	
	Let $E = u^*TM$ and choose a Lagrangian subbundle $F \subset E$ such that $F|_{(s,0)} = T_{u(s,0)} L_0$ and $F|_{(s,1)} = T_{u(s,1)} L_1$ for all $s$. After picking a trivialization, we can view the paths $u(s,0)$ and $u(s,1)$ being paths $I \to \mathcal{L}(n)$; call these $\gamma_0,\gamma_1$. Then, the index of $u$ can be defined as $I(u) := \mu(\gamma_0,\gamma_1)$. It is a result of the first two properties above that $I(u)$ depends only on homotopies of $u$ which keep the end points $\gamma_{x_\pm}$ fixed. So the choice of trivialization does not matter.
	
	Moreover, if $u,u'$ are two paths in $P(L_0,L_1)$ with the same endpoints, then $I(u) - I(u') = \chi(v)$. This $\chi \in H^1(P(L_0,L_1),\Z)$ is some class determined by the Maslov index for loops and $v$ is the loop obtained by concatenating $u$ and $u'$. We are interested in cases where this class $\chi = 0$; this happens, for example, when $c_1(M) = 0$ and $H^1(L_0) = H^1(L_1) = 0$. The latter condition ensures that the Maslov class, which obstructs the existence of gradings on Lagrangians, vanishes. For some details on this, see chapter 12 of \cite{Seidelbook}.
	
	When $\chi=0$, we can find numbers $i(\gamma_x) \in \frac{1}{2} \Z$ for each $x \in L_0 \cap L_1$. Then, if $u$ is a path between $\gamma_{x_+}$ and $\gamma_{x_-}$, then $I(u)= i(\gamma_{x_-})-i(\gamma_{x_+})$. Because of property (5), it can be arranged that
	
	\begin{equation} \label{index}
		i(\gamma_x) \equiv \frac{1}{2} \dim(T_xL_0 \cap T_xL_1) \pmod{1}
	\end{equation}
	
	\noindent Seidel calls numbers $i(\gamma_x)$ satisfying these properties \textit{coherent choices of indices.}
	
	We can also extend the above discussion to incorporate Hamiltonians. Choose two Hamiltonians $H_-,H_+$ and suppose that $\gamma_\pm$ are critical points of the actional functionals $\mathcal{A}_{H_\pm}$. Then if $u:I \to P(L_0,L_1)$ is a path between $\gamma_+$ and $\gamma_-$, there we can assign $u$ an index: $I_{H_-,H_+}(u) \in \frac{1}{2} \Z$. As before, if $\chi=0$, then there is a coherent choice of indices $i_H(\gamma) \in \frac{1}{2} \Z$ for any choice of Hamiltonian $H$ and critical point $\gamma$ of $\mathcal{A}_H$ so that
	\[I_{H_-,H_+}(u) = i_{H_-}(\gamma_{x_-})-i_{H_+}(\gamma_{x_+}). \]
	
	\noindent Furthermore, it can be arranged that
	\[i_H(\gamma) \equiv \frac{1}{2} \dim(D \phi^H_1 (T_{\gamma(0)}L_0) \cap T_{\gamma(1)} L_1) \pmod{1}.\]
	
	Also, if we begin with a coherent choice of indices $i(\gamma_x)$, we can choose $i_H(\gamma)$ such that $i(\gamma_x) = i_H(\gamma_x)$ when $H \equiv 0$.
	
	When $L_0,L_1$ are compact and the associated action functional is well-defined on $P(L_0,L_1)$ and also $\chi = 0$, then we have coherent choices of indices $i_H(\gamma)$. If $(H,J)$ are a regular pair, then we have a grading on the Floer homology groups $HF(L_0,L_1,H,J)$ which also induces gradings on local Floer homology.
	
	As was discussed at the beginning of Section 6, when $L_0 \cap L_1$ is a finite union of components $C_p$ with isolating neighborhoods $U_p$, then we have a filtration for the Floer chains and hence, a spectral sequence converging to $HF_*(L_0,L_1)$ with the $E^1$ page given by local Floer homology: $E^1_{p,q} = HF_{p+q}(L_0,L_1,U_p)$.
	
	If $L_0$ and $L_1$ have clean intersection, then property (4) above implies that for any coherent choice of indices the function $x \mapsto i(\gamma_x)$ is locally constant on $L_0 \cap L_1$. Let $i(C_p)$ be the value of this function on $C_p$ and $i'(C_p) = i(C_p)-\frac{1}{2} \dim C_p$. This is an integer because of Equation \ref{index}. Hence, $HF_*(L_0,L_1; U) \cong H_{*-i'(C)}(C, \Z_2)$ and the spectral sequence which converges to $HF_*(L_0,L_1)$ has $E^1$ page described completely by the topology of the $C_p$.
	
	\subsection{An Index Lemma}
	
	We wish to prove that $HF_*(L_0,L_1; U) \cong H_{*-\iota(C)}(C, \Z_2)$ in the case that $C$ is a minimally degenerate set and $\iota(C)$ is some type of Maslov index. From there, we will immediately have a spectral sequence as before. The results of Section 5 prove most of this result. We also know that there is a coherent choice of index $i(\gamma_x)$ for $x \in C$ that, because of property (4), is locally constant. What remains to be shown is that the index of $C$ and the index of a thickening $\Sigma_C$ are the same. 
	
	\begin{lemma}
		The index $i(C) = i(\Sigma_C)$. 
	\end{lemma}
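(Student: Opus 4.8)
The plan is to compare the two indices by transporting along the very Hamiltonian isotopy that produces the thickening. Recall from Theorem \ref{perturb} that $\Sigma = \Sigma_C$ arises as $\Sigma = \Lambda \cap Q_1$, where $Q_s = \phi^{K_s}(L)$ for $s \in [0,1]$ is a Hamiltonian isotopy with $Q_0 = L$ (so $L_0 = \Lambda$ is fixed and $L_1 = L$ is perturbed), and where the proof crucially establishes the tangency $T_x Q_s = T_x L$ for every $x \in C$ and every $s \in [0,1]$. I would combine this tangency with the invariance properties of the Maslov index listed in Section 6 and with the fact, already recorded just before this subsection, that a coherent choice of indices is locally constant on a clean intersection.

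First I would pin down the two indices as genuine numbers. Since $Q_1$ and $\Lambda$ intersect cleanly along the submanifold-with-boundary $\Sigma$, the dimension $\dim(T_y Q_1 \cap T_y \Lambda) = \dim\Sigma$ is constant as $y$ ranges over $\Sigma$, so by property (4) the coherent index $y \mapsto i_{K_1}(\gamma_y)$ is locally constant; as $\Sigma$ deformation retracts onto the connected set $C$ it is connected, so this value is the single number $i(\Sigma_C)$. Applying the same reasoning to the original pair, the first flattened-degeneracy condition $T_x S = T_x\Lambda\cap T_x L$ makes $\dim(T_x\Lambda\cap T_x L)=\dim S$ constant along $C$, so $x\mapsto i(\gamma_x)$ is constant on $C$ and equals $i(C)$. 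It therefore suffices to compare the two at a single basepoint $x_0 \in C \subset \Sigma$, that is, to show $i(\gamma_{x_0}) = i_{K_1}(\gamma_{x_0})$.

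For this comparison I would run the family $K_s$ as a homotopy of the Hamiltonian end-datum and use the index-difference (continuation) formula $I_{0,K_1}(u) = i_0(\gamma_{x_0}) - i_{K_1}(\gamma_{x_0})$. Because $C \subset \Lambda\cap Q_s$ for all $s$, the point $x_0$ remains a critical point of $\mathcal{A}_{K_s}$ throughout, so the homotopy traces a path $u$ whose index difference is the Maslov index of the path of Lagrangian-subspace pairs $\big(T_{x_0}\Lambda,\,T_{x_0}Q_s\big)$ swept out at $x_0$. By the tangency $T_{x_0}Q_s = T_{x_0}L$, this path is constant, and in particular $\dim(T_{x_0}\Lambda \cap T_{x_0}Q_s)$ is constant in $s$; hence property (4) forces the Maslov index, and so $I_{0,K_1}(u)$, to vanish. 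Evaluating at $s=0$ (trivial Hamiltonian, original intersection) and $s=1$ (Hamiltonian $K_1$, thickened intersection) gives $i(\gamma_{x_0}) = i_{K_1}(\gamma_{x_0})$, and feeding this into the two constancy statements yields $i(C) = i(\Sigma_C)$.

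The main obstacle is the rigorous execution of the middle step: identifying the $s$-variation of the coherent index with the Maslov index to which property (4) applies. The tangency result delivers constancy of the endpoint data $T_{x_0}Q_s$, but $i_H$ is defined through the linearized Hamiltonian flow $D\phi^{H}_t$ together with a choice of Lagrangian subbundle, so I would need to pass from the geometric tangent spaces to the path actually used in the Robbin--Salamon definition, using the conjugation invariance (property (2)) to strip off the linearized flow and reduce the crossing data to the constant-dimension situation governed by property (4). In Case 2 of Theorem \ref{perturb}, where $L$ is not a graph and one argues in the symplectic reduction, this bookkeeping must be carried through the double lift; the tangency $T_x Q_s = T_x L$ nevertheless holds there as well, so the argument is structurally identical and only the transport of the index through the reduction and its lift requires extra care.
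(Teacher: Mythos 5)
Your proposal is correct in substance, but it takes a genuinely different route from the paper. The paper's proof reduces to the Morse model: it regards $C$ (locally) as the critical locus of a minimally degenerate function $f$, observes that the flattening perturbation $\tilde f$ kills the positive eigenvalues of $\text{Hess}_x f$ while leaving the dimension of the negative eigenspace --- the ``Morse-type index'' $\dim V$ --- unchanged, and then invokes Theorem \ref{Morse} (Morse and Floer data coincide for $C^2$-small Hamiltonians) to convert equality of Morse indices into equality of Maslov indices. You instead stay entirely in the Lagrangian setting and compare coherent indices across the isotopy $Q_s$ of Theorem \ref{perturb}, using the tangency $T_xQ_s=T_xL$ along $C$ together with properties (2) and (4) of the Maslov index. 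What your route buys: it never uses the Morse model, so it applies uniformly in Case 2 of Theorem \ref{perturb}, where $L$ is not a graph and a Morse description exists only after symplectic reduction --- a point the paper's proof glosses over with ``modeled locally on minimally degenerate functions.'' What the paper's route buys: an explicit identification of the shift as $\dim V$, the dimension of the negative eigenspace, which is the concrete quantity $\iota(\Sigma_p)$ appearing in Theorem \ref{qmdspecseq}. One remark on the obstacle you flag at the end: it is less serious than you suggest, because Theorem \ref{perturb} gives not only $T_xQ_s=T_xL$ but also $\text{Hess}_x K_s=0$ for $x\in C$. Hence the linearizations of the generating vector fields vanish along $C$, the linearized flow of the isotopy is the identity at every point of $C$ for all $s$, and the conjugation in property (2) is by the constant path $\id$; the path of linearized data entering the Robbin--Salamon index really is the constant pair $(T_{x_0}\Lambda,\,T_{x_0}L)$, so property (4) applies directly, in Case 2 (through the lift) just as in Case 1. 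The only other blemish is notational: the Hamiltonian whose critical points correspond to $\Lambda\cap Q_1$ is the one generating the full isotopy $\{Q_s\}_{s\in[0,1]}$, suitably reparametrized, rather than the slice $K_{s=1}$, but this does not affect the argument.
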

	
	\begin{proof}
		Since minimally degenerate intersections are modeled locally on minimally degenerate functions, let $f: L \to \R$ be a minimally degenerate function and $C$ a minimally degenerate isolated critical locus with $S_C$ as the associated submanifold. Let $x \in S_C$. Then, the $H_x f$ is positive semi-definite along $T_x S_C$ but negative definite along some subspace $V$ transverse to $T_x S_C$ in $T_x L$. Perturbing $f$ gives a new function $\tilde{f}$ whose critical locus is a thickening $\Sigma_C$ of $C$. As a reminder, $\Sigma_C \subset S_C$ is a codimension 0 submanifold. The thickening is such that $H_x \tilde{f}$ is zero along $T_x S_C$ and negative definite along $V$. In particular this perturbation gets rid of all the positive eigenvalues of the Hessian of $f$ at $x$ but the dimension of the negative eigenspace stays the same. As a result, the index of $f$ and $\tilde{f}$ at $x$ are the same and equal $\dim V$. Here, index simply means the dimension of the negative eigenspace. So the perturbation does not change the ``Morse-type'' index.
		
		Next, we choose a regular $C^2$-small Hamiltonian $H$ and obtain a coherent choice of index $i_H(\gamma)$ for $\gamma$ being a critical point of $\mathcal{A}_H$. Because $H$ is $C^2$-small, Theorem \ref{Morse} says that for $x \in L_0 \cap L_1$, $\gamma_x$ is a critical point of $\mathcal{A}_H$. We may further conclude that $i(\gamma_x) = i_H(\gamma_x)$ as the Morse and Floer theoretic data coincide. 
		
		Therefore, the Maslov index will equal this Morse-type index up to a shift. Since the Morse-type index does not change, neither does the Maslov index as the Hamiltonian is $C^2$ small.
	\end{proof}
	
	With this lemma, we have the following result which is almost verbatim the statement by Seidel:
	
	\begin{theorem} \label{qmdspecseq}
		Suppose $L_0 \cap L_1$ decomposes into $\bigsqcup C_p$ where each $C_p$ is quasi-minimally degenerate. Let $\Sigma_p := \Sigma_{C_p}$ be the thickening for $C_p$. Then there is a spectral sequence which converges to $HF_*(L_0,L_1)$ and whose $E^1$-term is
		\[E^1_{pq} =
		\begin{cases}
			H_{p+q-\iota(\Sigma_p)}(C_p;\Z/2), & 1 \leq p \leq r; \\
			0, & \text{otherwise}.
		\end{cases}\]
	\end{theorem}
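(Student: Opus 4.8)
The plan is to treat the statement as an assembly of three ingredients already in hand: the local computation of Theorem \ref{main_thm}, the grading comparison furnished by the Index Lemma, and the standard mechanism that extracts a spectral sequence from an action-filtered Floer complex, exactly as in Seidel's reformulation of Theorem \ref{Pmain_thm}. First I would apply Theorem \ref{main_thm} to each quasi-minimally degenerate piece $C_p$ separately. For each $p$ this yields a thickening $\Sigma_p$, a codimension-$0$ submanifold-with-boundary of the associated $S_p$ that deformation retracts onto $C_p$, together with a $C^1$-small Hamiltonian perturbation realizing $\Sigma_p$ as a clean intersection locus and an isolating neighborhood $U_p$ for which $HF_*(\mathcal{U}_p,\Z_2) \cong H^{sing}_*(\Sigma_p,\Z_2)$. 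Since $C_p \hookrightarrow \Sigma_p$ is a homotopy equivalence, this already identifies the local groups with $H^{sing}_*(C_p,\Z_2)$ up to grading.

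Next I would globalize the local data. Combining the individual $C^1$-small perturbations by a partition of unity, and shrinking all of them if necessary, I would produce a single Hamiltonian $H$ and a patched compatible almost complex structure $J$ such that the $U_p$ are pairwise disjoint isolating neighborhoods and, by Proposition \ref{lag_invariance} applied in each $U_p$, every Floer trajectory of $(H,J)$ with asymptotics in $U_p$ stays inside $U_p$. Ordering the pieces by the action values $a_1 \le \cdots \le a_r$ of their constant-path critical points produces a filtration $CF^1 \subset \cdots \subset CF^r = CF_*(H)$ by subcomplexes, with $CF^p/CF^{p-1} = CF(U_p)$. The associated spectral sequence of this finitely filtered complex converges to $HF_*(L_0,L_1)$ and has $E^1_{p,q} = HF_{p+q}(L_0,L_1,U_p)$; all entries outside $1 \le p \le r$ vanish since there are only $r$ filtration levels.

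It then remains to insert the grading shift. For the clean intersection along $\Sigma_p$, the coherent-index discussion of Section 6 gives $HF_{p+q}(L_0,L_1,U_p) \cong H_{p+q-\iota(\Sigma_p)}(\Sigma_p,\Z_2)$, where $\iota(\Sigma_p)$ is the integer shift built from the Maslov index of the clean locus. Composing with the homotopy equivalence $C_p \simeq \Sigma_p$ turns this into $H_{p+q-\iota(\Sigma_p)}(C_p,\Z_2)$, which is exactly the claimed $E^1$-term. The Index Lemma is what guarantees this shift is legitimate: because the thickening perturbation is $C^2$-small and leaves the ``Morse-type'' index unchanged, the coherent index satisfies $i(C_p)=i(\Sigma_p)$, so the shift is intrinsic to $C_p$ and independent of which thickening was chosen.

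The step I expect to be the main obstacle is the globalization: producing one Hamiltonian and one almost complex structure that simultaneously realize all of the thickenings $\Sigma_p$, keep the neighborhoods $U_p$ disjoint and isolated, and confine all Floer data to the $U_p$, while keeping the global action functional single-valued on $P(L_0,L_1)$ so that the action filtration is genuinely by subcomplexes. This forces the per-piece perturbations of Theorem \ref{perturb} to be taken uniformly $C^1$-small enough for Proposition \ref{lag_invariance} to apply in every $U_p$ at once, and requires the coherence hypotheses (of the type $\chi=0$, i.e. vanishing of the relevant Maslov class) needed to define a consistent grading across all the pieces simultaneously.
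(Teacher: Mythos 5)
Your proposal is correct and follows essentially the same route as the paper: the paper also assembles Theorem \ref{main_thm} for the local groups, the action filtration with $E^1_{p,q}=HF_{p+q}(L_0,L_1,U_p)$ exactly as in Seidel's reformulation of Po\'zniak, and the Index Lemma $i(C_p)=i(\Sigma_p)$ to justify the grading shift, stating the theorem as an immediate consequence. Your closing caveats (uniform smallness of the perturbations, disjoint isolating neighborhoods, single-valued action, and the coherence hypothesis $\chi=0$) are precisely the hypotheses the paper carries implicitly from its Section 6 discussion, so they flag genuine standing assumptions rather than gaps in your argument.
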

	
	\noindent This is a homology spectral sequence, i.e. the $k$-th differential $(k \geq 1)$ has degree $(-k,k-1)$.

\noindent \textbf{Remark:} We've presented the theorem in this way because that was the setting of Po\'zniak but as indicated in the proofs of Section \ref{adapt}, we can drop the compactness condition. Thus, if there are infinitely many QMD disjoint sets $C_p$ that form $L_0 \cap L_1$, so long as they are isolated, we can use the action filtration to build a finite $E^1$ page with action less than $A$ and form a directed system of finite $E^1$ pages. Then, since direct limits commute with homology, the limit $E^1$ page indeed is the $E^1$ page of a spectral sequence. Perhaps a more high-brow way to say this is that for any  additive functor $F$ between abelian categories induces a functor $F_*$ on the corresponding chain complex categories which preserves quasi-isomorphisms and commutes with homology. In this case, the the functor is $\text{colim}: \mathcal{C} \to \bf{AbGrps}$ where $\mathcal{C}$ is a filtered abelian category that we construct from our directed system of $E^1$ pages.

In applications where we introduce a Hamiltonian, say, in Hamiltonian Floer theory (we can switch back and forth between Hamiltonian and Lagrangian settings by Section ), for each cutoff $A$, there are only finitely many isolated families $C_p$. This is because we're working not on the manifold but rather the loop (or path) space and thus, if two loops have different action, we can find isolating neighborhoods.
	
	\section{Some Applications}
	
	In this section, we present some applications of this method. Several of these examples have already been studied but often with rather \textit{ad hoc} tools. Our purpose in presenting them is to demonstrate that our result can be used as a general and systematic tool. Before we do that, we'll first sketch why manifolds-with-corners are examples of QMD sets because these appear in the first two examples.
	
	Recall that a manifold-with-corners (or more succinctly, cornered manifolds) is a topological space $M = \mathring{M} \sqcup \partial M$ which decomposes into two pieces. The first piece is the interior $\mathring{M}$ which is an open manifold of, say, dimension $n$. The other piece, $\partial M$, also decomposes into pieces which are all manifolds themselves. The pieces have positive codimension and every point $p \in \partial M$ admits a neighborhood homeomorphic to $[0,1)^k \times (0,1)^{n-k}$ for some $0< k \leq n$.
	
	Now, working in this local model, let $\R^k$ have coordinates $x_1,...,x_k$ and $f(x_1,...,x_k) = \prod_{i=1}^k x_i$. Then $f^{-1}(0)$ is a union of hyperplanes and removing this union divides up $\R^k$ into $2^k$ connected components with one of them being distinguished as the \textbf{positive orthant} where all the $x_i \geq 0$; it is a cornered manifold. So then, the set $f^{-1}((\epsilon,\infty))$ intersected with the positive orthant gives us a manifold-with-boundary (more succinctly, a bordered manifold) which is a smoothing of the cornered manifold and the function gives us a deformation retracts onto the positive orthant. Moreover, note that $f$ does not have any critical points outside of $f^{-1}(0)$, the union of hyperplanes. 
	
	With some modifications to this scenario (which we do not write in detail), we can make a function $\tilde{f} \geq 0$ so that the positive orthant is $\tilde{f}^{-1}(0)$ is the set of minima and the thickening is diffeomorphic to the smoothing (by an isotopy). Essentially, it is the smoothing but we work in such a way that the thickening includes the positive orthant. This is all doable once we leave this model behind and work with more general manifolds because a cornered submanifold $C$ lives in an ambient manifold and locally, we may as well think of the ambient manifold as $\R^k \times \R^{n-k}$. We'll now turn to our examples.
	
	\begin{example}
		Consider $M = \C^*$ which is $\mathbb{CP}^1$ minus two points. Then $\C^* \times \C^*$ can be thought of as $\bb{P}^1 \times \bb{P}^1$ minus four projective lines which is, of course, an affine variety.
		
		Now, view $\C^* \cong \R_s \times S^1_t$ as a cylinder and pick real constants $a_1,b_1 \in \R$. We define a Hamiltonian $H_1(s,t) = H_1(s)$ which is $0$ on $[a_1,b_1]$ and $\frac{d}{ds}H_1(s) < 0$ when $s < a_1$ and $\frac{d}{ds}H_1(s) > 0$ when $s > b_1$. Then the critical set of $H_1$ is $A_1 = [a,b] \times S^1$, an annulus. We may do something similar on the other copy of $\C^*$ to produce a Hamiltonian $H_2$ with the critical set being an annulus, $A_2$.
		
		\begin{center}
			\tikzset{every picture/.style={line width=0.75pt}} %set default line width to 0.75pt        
			
			\tikzset{every picture/.style={line width=0.75pt}} %set default line width to 0.75pt        
			
			\begin{tikzpicture}[x=0.75pt,y=0.75pt,yscale=-1,xscale=1]
				%uncomment if require: \path (0,300); %set diagram left start at 0, and has height of 300
				
				%Shape: Path Data [id:dp804733591454371] 
				\draw   (251.17,142.83) .. controls (251.17,142.83) and (251.17,142.83) .. (251.18,142.83) -- (434.33,142.83) .. controls (449.89,142.83) and (462.5,160.56) .. (462.5,182.42) .. controls (462.5,204.28) and (449.89,222) .. (434.33,222) -- (251.17,222) .. controls (235.61,222) and (223,204.28) .. (223,182.42) .. controls (223,160.56) and (235.61,142.83) .. (251.17,142.83) -- cycle ;
				%Shape: Arc [id:dp22247643255937088] 
				\draw  [draw opacity=0] (251.17,142.83) .. controls (251.21,142.83) and (251.26,142.83) .. (251.3,142.83) .. controls (267.87,142.83) and (281.3,160.55) .. (281.3,182.42) .. controls (281.3,204.28) and (267.87,222) .. (251.3,222) .. controls (251.25,222) and (251.19,222) .. (251.14,222) -- (251.3,182.42) -- cycle ; \draw   (251.17,142.83) .. controls (251.21,142.83) and (251.26,142.83) .. (251.3,142.83) .. controls (267.87,142.83) and (281.3,160.55) .. (281.3,182.42) .. controls (281.3,204.28) and (267.87,222) .. (251.3,222) .. controls (251.25,222) and (251.19,222) .. (251.14,222) ;
				%Shape: Arc [id:dp05557162959503392] 
				\draw  [draw opacity=0][dash pattern={on 0.84pt off 2.51pt}] (311.17,142.83) .. controls (311.21,142.83) and (311.26,142.83) .. (311.3,142.83) .. controls (327.87,142.83) and (341.3,160.55) .. (341.3,182.42) .. controls (341.3,204.28) and (327.87,222) .. (311.3,222) .. controls (311.25,222) and (311.19,222) .. (311.14,222) -- (311.3,182.42) -- cycle ; \draw  [dash pattern={on 0.84pt off 2.51pt}] (311.17,142.83) .. controls (311.21,142.83) and (311.26,142.83) .. (311.3,142.83) .. controls (327.87,142.83) and (341.3,160.55) .. (341.3,182.42) .. controls (341.3,204.28) and (327.87,222) .. (311.3,222) .. controls (311.25,222) and (311.19,222) .. (311.14,222) ;
				%Shape: Arc [id:dp9637979975937576] 
				\draw  [draw opacity=0][dash pattern={on 0.84pt off 2.51pt}] (373.17,142.83) .. controls (373.21,142.83) and (373.26,142.83) .. (373.3,142.83) .. controls (389.87,142.83) and (403.3,160.55) .. (403.3,182.42) .. controls (403.3,204.28) and (389.87,222) .. (373.3,222) .. controls (373.25,222) and (373.19,222) .. (373.14,222) -- (373.3,182.42) -- cycle ; \draw  [dash pattern={on 0.84pt off 2.51pt}] (373.17,142.83) .. controls (373.21,142.83) and (373.26,142.83) .. (373.3,142.83) .. controls (389.87,142.83) and (403.3,160.55) .. (403.3,182.42) .. controls (403.3,204.28) and (389.87,222) .. (373.3,222) .. controls (373.25,222) and (373.19,222) .. (373.14,222) ;
				%Curve Lines [id:da1433761430649947] 
				\draw    (251.3,35.83) .. controls (259.33,99.83) and (290.33,122.83) .. (317.33,122.83) ;
				%Straight Lines [id:da5913636366300525] 
				\draw    (317.33,123) -- (379,123) ;
				%Curve Lines [id:da6045818284960227] 
				\draw    (379,123) .. controls (407.33,121.83) and (425.33,95.83) .. (434.33,35.83) ;
				%Shape: Circle [id:dp9520721979184623] 
				\draw   (370.17,142.83) .. controls (370.17,141.13) and (371.55,139.75) .. (373.25,139.75) .. controls (374.95,139.75) and (376.33,141.13) .. (376.33,142.83) .. controls (376.33,144.54) and (374.95,145.92) .. (373.25,145.92) .. controls (371.55,145.92) and (370.17,144.54) .. (370.17,142.83) -- cycle ;
				%Shape: Rectangle [id:dp6817249393993574] 
				\draw  [draw opacity=0][fill={rgb, 255:red, 155; green, 155; blue, 155 }  ,fill opacity=0.25 ] (314.25,142.83) -- (373.14,142.83) -- (373.14,222) -- (314.25,222) -- cycle ;
				%Shape: Arc [id:dp4492759512927902] 
				\draw  [draw opacity=0][fill={rgb, 255:red, 255; green, 255; blue, 255 }  ,fill opacity=1 ][dash pattern={on 0.84pt off 2.51pt}] (311.17,142.83) .. controls (311.21,142.83) and (311.26,142.83) .. (311.3,142.83) .. controls (327.87,142.83) and (341.3,160.55) .. (341.3,182.42) .. controls (341.3,204.28) and (327.87,222) .. (311.3,222) .. controls (311.25,222) and (311.19,222) .. (311.14,222) -- (311.3,182.42) -- cycle ; \draw  [dash pattern={on 0.84pt off 2.51pt}] (311.17,142.83) .. controls (311.21,142.83) and (311.26,142.83) .. (311.3,142.83) .. controls (327.87,142.83) and (341.3,160.55) .. (341.3,182.42) .. controls (341.3,204.28) and (327.87,222) .. (311.3,222) .. controls (311.25,222) and (311.19,222) .. (311.14,222) ;
				%Shape: Circle [id:dp5699509742577042] 
				\draw   (310.17,142.83) .. controls (310.17,141.13) and (311.55,139.75) .. (313.25,139.75) .. controls (314.95,139.75) and (316.33,141.13) .. (316.33,142.83) .. controls (316.33,144.54) and (314.95,145.92) .. (313.25,145.92) .. controls (311.55,145.92) and (310.17,144.54) .. (310.17,142.83) -- cycle ;
				%Shape: Arc [id:dp9702195463082353] 
				\draw  [draw opacity=0][fill={rgb, 255:red, 155; green, 155; blue, 155 }  ,fill opacity=0.25 ][dash pattern={on 0.84pt off 2.51pt}] (373.17,142.83) .. controls (373.21,142.83) and (373.26,142.83) .. (373.3,142.83) .. controls (389.87,142.83) and (403.3,160.55) .. (403.3,182.42) .. controls (403.3,204.28) and (389.87,222) .. (373.3,222) .. controls (373.25,222) and (373.19,222) .. (373.14,222) -- (373.3,182.42) -- cycle ; \draw  [dash pattern={on 0.84pt off 2.51pt}] (373.17,142.83) .. controls (373.21,142.83) and (373.26,142.83) .. (373.3,142.83) .. controls (389.87,142.83) and (403.3,160.55) .. (403.3,182.42) .. controls (403.3,204.28) and (389.87,222) .. (373.3,222) .. controls (373.25,222) and (373.19,222) .. (373.14,222) ;
				
				% Text Node
				\draw (274,70.4) node [anchor=north west][inner sep=0.75pt]    {$H_{i}$};
				% Text Node
				\draw (316,175.4) node [anchor=north west][inner sep=0.75pt]    {$a_{i}$};
				% Text Node
				\draw (411,175.4) node [anchor=north west][inner sep=0.75pt]    {$b_{i}$};
				% Text Node
				\draw (199,180.4) node [anchor=north west][inner sep=0.75pt]    {$...$};
				% Text Node
				\draw (474,180.4) node [anchor=north west][inner sep=0.75pt]    {$...$};	
			\end{tikzpicture}
			\vspace{3mm}
			
			\caption{Hamiltonian $H_i$ on a cylinder; the shaded region is the critical set}
		\end{center}
		
		\noindent Then letting $H = H_1 + H_2$ on $M$, the critical set is $A_1 \times A_2$ which is a manifold-with-corners. Since the graph of the time-1 flow of $H$, when intersected with the diagonal $\Delta \subset M \times M$, is precisely $A_1 \times A_2$, we can move everything over to the Lagrangian Floer setting as briefly outlined in Section \ref{hamfloer}. Since the critical set is a manifold-with-corners, it is not Morse-Bott. But it is QMD in the Lagrangian sense. This is because away from the boundary and corners, the critical set is Morse-Bott and so we only need to focus on the codim 1 stratum. We can always find a Hamiltonian with which to perturb the critical set into a manifold-with-boundary (so we smooth out the corners) which puts us in the Lagrangian flattened degenerate situation. Lastly, being flattened degenerate automatically implies QMD.

		If we prefer, we can proceed with computing the Lagrangian Floer homology of the graph intersecting the diagonal and recover Hamiltonian Floer homology that way. But the present situation is simple enough here that we can just pick a Morse function $f$ and small constant $\epsilon > 0$. Perturbing $H$ by $\epsilon f$ breaks the critical set into isolated points. Then, 
		\[HF_*(A_1 \times A_2) \cong HM_*(A_1 \times A_2) \cong H^{sing}_*(A_1) \otimes H^{sing}_*(A_2). \]
		
		\noindent Here, $HM_*$ is Morse homology and of course, we have a K\"unneth formula.
		
	\end{example}
	
	\begin{example}
		Let $X$ be a smooth complex affine variety; Hironaka's theorem gives a compactification of $X$ in the sense that $X \cong M \setminus D$ where $M$ is a smooth projective variety and $D$ is a simple normal crossings divisor which supports an ample line bundle. In a paper by Ganatra and Pomerleano \cite{GanatraPomerleano}, their main result produces a spectral sequence which converges to the symplectic cohomology $SH^*(X)$ and its $E_1$ page is ring isomorphic to the logarithmic cohomology of $(M,D)$:
		\[H^*_{log}(M,D) \cong \bigoplus_{p,q} E^{p,q}_1\]
		
		\noindent The $E^{p,q}_1$ are formed from local Floer homology groups. In order to obtain the isomorphism on the group level, they conduct a study of families of Hamiltonian orbits which are manifolds with corners via a variant of Morse-Bott analysis. An alternative approach to their study is to choose a neighborhood on which to perturb a manifold with corners, thereby smoothing out the corners to become a manifold with boundary (and thus, placing us in the QMD setting like the previous example). One then computes the local Hamiltonian Floer groups by translating to the Lagrangian setting. Ganatra and Pomerleano were aware of this alternative approach which they allude to in Remark 4.17 on p. 72 (arxiv version) in their paper. Since local Floer homology is invariant under such perturbations, we may freely perturb in this manner. \\
		
		\noindent \textbf{Remark:} In order, to produce the multiplicative structure, Ganatra and Pomerleano produce a novel log PSS map which they developed in a prior paper \cite{GP}.
	\end{example}
	
	\begin{example}
		This next example was studied by Pascaleff in his thesis \cite{Pascaleff1} and we will spend considerably more time on it. He computed the wrapped Floer homology of certain Lagrangians, including the ring structure that comes from counting Floer triangles. Here, we will give a weaker result as a technical demonstration of the principles from above. The purpose of choosing this example is to compare these methods to known results and to also fill in some details of Pascaleff's work. For some excellent pictures, consult \cite{Pascaleff1}, \cite{Pascaleff2}.
		
		Consider a line $L$ and conic $C$ in $\mathbb{CP}^2$ which intersect transversally. Letting $D = L \cup C$, this is an anticanonical divisor of $\mathbb{CP}^2$ and has the properties needed to view the pair $(\mathbb{CP}^2,D)$ as a log Calab-Yau. For instance, $D$ is a normal crossings divisor. Further details of the definition are found in Pascaleff's thesis.
		
		Next, we opt to blowup the two points of intersection since any blowup along $D$ will not affect the $\mathbb{CP}^2 \setminus D$. What we obtain then is a new divisor $\widetilde{D}$ inside of the twice blowup, call it $X$, which is the union of the proper transforms $\widetilde{L}$ and $\widetilde{C}$ as well as two exceptional divisors, $E$ and $F$. Below is the toric picture where we take $\mathbb{CP}^2$, represented by its moment polytope $\Delta$. The preimate $\mu^{-1}(\partial \Delta)$ of the boundary under the moment map is a union of three lines. We can smooth one of the corners, the bottom left one at the cost of introducing a nodal fiber, marked with an x.
		
		\begin{center}
			\tikzset{every picture/.style={line width=0.75pt}} %set default line width to 0.75pt        
			
			\begin{tikzpicture}[x=0.75pt,y=0.75pt,yscale=-1,xscale=1]
				%uncomment if require: \path (0,300); %set diagram left start at 0, and has height of 300
				
				%Straight Lines [id:da10012576166069298] 
				\draw    (80,34.33) -- (205.67,160) ;
				%Straight Lines [id:da22024604212504806] 
				\draw    (80,160) -- (205.67,160) ;
				%Straight Lines [id:da9091980146595724] 
				\draw    (80,34.33) -- (80,160) ;
				%Straight Lines [id:da9740457325364644] 
				\draw  [dash pattern={on 0.84pt off 2.51pt}]  (80,160) -- (110,130) ;
				%Straight Lines [id:da9014203977304152] 
				\draw    (110,120) -- (120,130) ;
				%Straight Lines [id:da7741327662004096] 
				\draw    (120,120) -- (110,130) ;
				%Straight Lines [id:da04050793911966433] 
				\draw [color={rgb, 255:red, 0; green, 0; blue, 255 }  ,draw opacity=1 ]   (86.01,103.78) -- (86.01,123.78) ;
				%Straight Lines [id:da4877395427802358] 
				\draw [color={rgb, 255:red, 0; green, 0; blue, 255 }  ,draw opacity=1 ]   (121.67,84.83) -- (151.67,114.83) ;
				%Curve Lines [id:da1339756645298007] 
				\draw [color={rgb, 255:red, 0; green, 0; blue, 255 }  ,draw opacity=1 ]   (86.01,103.78) .. controls (84.67,48.83) and (90.67,49.83) .. (121.67,84.83) ;
				%Curve Lines [id:da9787556697261908] 
				\draw [color={rgb, 255:red, 0; green, 0; blue, 255 }  ,draw opacity=1 ]   (126.01,153.78) .. controls (192.67,152.83) and (197.67,162.83) .. (151.67,114.83) ;
				%Curve Lines [id:da9938907803376504] 
				\draw [color={rgb, 255:red, 0; green, 0; blue, 255 }  ,draw opacity=1 ]   (86.01,123.78) .. controls (85.68,154.61) and (93.68,154.61) .. (126.01,153.78) ;
				%Straight Lines [id:da42571976093482] 
				\draw    (258,50) -- (344,140) ;
				%Straight Lines [id:da5721448513154324] 
				\draw    (238,160) -- (344,160) ;
				%Straight Lines [id:da859740392574853] 
				\draw    (238,50) -- (238,160) ;
				%Straight Lines [id:da5449601141394924] 
				\draw  [dash pattern={on 0.84pt off 2.51pt}]  (238,160) -- (268,130) ;
				%Straight Lines [id:da3032436492871402] 
				\draw    (268,120) -- (278,130) ;
				%Straight Lines [id:da7130504179112502] 
				\draw    (278,120) -- (268,130) ;
				%Straight Lines [id:da25612565629505535] 
				\draw [color={rgb, 255:red, 0; green, 0; blue, 255 }  ,draw opacity=1 ]   (244.01,103.78) -- (244.01,123.78) ;
				%Straight Lines [id:da8507414082146192] 
				\draw [color={rgb, 255:red, 0; green, 0; blue, 255 }  ,draw opacity=1 ]   (279.67,84.83) -- (309.67,114.83) ;
				%Curve Lines [id:da27497218467690154] 
				\draw [color={rgb, 255:red, 0; green, 0; blue, 255 }  ,draw opacity=1 ]   (244.01,103.78) .. controls (242.67,48.83) and (248.67,49.83) .. (279.67,84.83) ;
				%Curve Lines [id:da23491545186976293] 
				\draw [color={rgb, 255:red, 0; green, 0; blue, 255 }  ,draw opacity=1 ]   (284.01,153.78) .. controls (350.67,152.83) and (355.67,162.83) .. (309.67,114.83) ;
				%Curve Lines [id:da13884012112387278] 
				\draw [color={rgb, 255:red, 0; green, 0; blue, 255 }  ,draw opacity=1 ]   (244.01,123.78) .. controls (243.68,154.61) and (251.68,154.61) .. (284.01,153.78) ;
				%Straight Lines [id:da042250401999423204] 
				\draw    (344,140) -- (344,160) ;
				%Straight Lines [id:da028786940326205324] 
				\draw    (258,50) -- (238,50) ;
			
			\end{tikzpicture}
		
		\caption{Toric Picture with $\mathbb{CP}^2$ and $Bl_2 \mathbb{CP}^2$}
		\end{center}
		
		The reason for this blowup is because it gives us control over neighborhoods of the intersections of these four curves. In particular, we may choose holomorphic coordinates $(z,w)$ such that the a neighborhood of an intersection appears as $\C^2$ with one divisor locally appearing as $\{z = 0\}$ and the other as $\{w = 0\}$. The blowup parameters allow us to obtain a symplectic form which sees the two divisors as symplectically orthogonal. This is a consequence of the $U(2)$ invariance of the blowup form. In fact, we only need $U(1) \times U(1)$ invariance. The argument here is essentially what is outlined in \cite{Seidel1}, Theorem 4.5.
		
		By a relative Moser argument, we are able to extend the symplectic form to a neighborhood $U$ of $\widetilde{D}$ without disrupting the symplectic orthogonality of the curves. Thus, we have another log Calabi-Yau $(X,\widetilde{D})$ and we shall study $X \setminus \widetilde{D}$ or more precisely, $X \setminus U$. In order to do this, let us give a more refined view of $U$. The main issue is to consider neighborhoods of the nodes as there are concerns about smoothness. We take polar coordinates for $\C^2$, $(r_1,\theta_1,r_2, \theta_2)$ and consider the real hypersurface $\{r_1 r_2 = \delta \}$ for some small $\delta > 0$. Then in this locale, $U$ may be thought of as $\{r_1 r_2 < \delta\}$. When we extend the hypersurface, it gives a 3-manifold $M$ which is, in fact, a $T^2$ bundle over $S^1$. This is clear from the toric picture where the blue curve represents the $S^1$ over which $M$ is $T^2$-fibered. It is then also clear that it doesn't matter whether we use $\mathbb{CP}^2$ or its blowup at two points. This was presented in Section 7 of Pascaleff's thesis.
		
		$M$ can be classified by an element of $SL(2,\Z)$ which gives the monodromy. The normal bundles of the curves in $X$ are $\mathcal{O}(-1) \to \mathbb{CP}^1$ for the exceptional divisors and the proper transform $\widetilde{L}$. For $\widetilde{C}$, it is $\mathcal{O}(2) \to \mathbb{CP}^1$. To construct $M$, we need to ``plumb'' the circle bundles of these normal bundles together. The map to plumb these bundles at the nodes corresponds to
		\[J = \begin{pmatrix}
			0 & -1 \\
			1 & 0
		\end{pmatrix}\]
		
		\noindent since we are basically interchanging circles fibers in an orientation-preserving way. On the other hand, by choosing meromorphic sections with single poles for $\mathcal{O}(-1)$ and a holomorphic section with two zeros for $\mathcal{O}(2)$, we obtain some contributions to the monodromy map. Let 
		\[T = \begin{pmatrix}
			1 & 1 \\
			0 & 1
		\end{pmatrix}.\]
		
		\noindent Then, the sections above correspond to $T^{-1}$ and $T^2$, respectively. Thus, the monodromy map (in this basis) is given by multiplication of a sequence of these matrices: 
		\[\mu = JT^{-1}JT^{-1}JT^{-1}J T^2 =
		\begin{pmatrix}
			2 & 1 \\
			-1 & 0
		\end{pmatrix}.\]
		
		The vector field $V = r_1 \partial_{\theta_1} + r_2 \partial_{\theta_2}$ gives a characteristic foliation of $M$ and is tangent to the fibers. Pascaleff wrote down a contact form $\alpha$ which realizes $V$ as its Reeb vector field. We may also write down a Liouville vector field in order to produce a Liouville domain which appears as the affine variety $X \setminus U$ with contact boundary given by $M$.
		
		In the sequel, our goal is to study the wrapped Floer homology of the Lagrangian defined by removing the neighborhood of the divisor from the real part of $X$. For the definition of wrapped Floer homology, one may consult \cite{McLeanGrowth}, \cite{Pascaleff1}. One can describe the real part as the fixed point set of an antisymplectic involution which coincides with complex conjugation away from the blowup points and so, topologically, it will be $\mathbb{RP}^2 \setminus D$ where $D$ is the real part of the conic plus line; of course, this is the same thing as removing a conic from $\R^2$. Call this Lagrangian $\Lambda$.
		
		$\Lambda$ is cylindrical at infinity which means it is, at infinity, the product of a Legendrian submanifold of the contact boundary, product with $\R$. This parameter $\R$ can be thought of as changing $\delta$. The submanifold is a 1-manifold and is disconnected. Indeed, if we look at the real picture of $\Lambda$, the real part of the divisor separates $\Lambda$ into two or three components, depending on the real conic. So $\Lambda$ has two or three cylindrical ends. Each boundary component is a Legendrian knot and may be viewed as a section of the $T^2$ bundle over $S^1$.
		
		Near the nodes, $V = r_1 \partial_{\theta_1} + r_2 \partial_{\theta_2}$. $\Lambda$ intersects each torus fiber at four points since the real part requires $\theta_1,\theta_2 \in \{0,\pi\}$. If we view the torus as $\R^2/(2 \pi\Z)^2 = [0,1]^2/\sim$, then the points are $(0,0),(\frac{1}{2},0), (0,\frac{1}{2}), (\frac{1}{2},\frac{1}{2})$. We have Reeb orbits whenever $r_2/r_1 \in \Q$. However, we also have Reeb chords between the distinct points depending on $r_1$ and $r_2$. We'll let $q=r_1, p = r_2$. It can be easily checked that
		
		\begin{itemize}
			\item $(0,0)$ connects to $(0, \frac{1}{2})$ if and only if $q$ is even.
			
			\item Whether $(0,0)$ connects to $(\frac{1}{2},0)$ is symmetric to the above. We have solutions exactly when $p$ is even.
			
			\item $(0,0)$ connects to $(\frac{1}{2},\frac{1}{2})$ exactly when both $p$ and $q$ are odd.
			
			\item Whether $(\frac{1}{2},0)$ connects to $(0,\frac{1}{2})$ is equivalent to the previous; think of $(\frac{1}{2},0)$ as $(\frac{1}{2},1)$, then translate the plane down by a half.
			
			\item Whether $(\frac{1}{2},0)$ connects to $(\frac{1}{2},\frac{1}{2})$ or whether $(0,\frac{1}{2})$ connects to $(\frac{1}{2},\frac{1}{2})$ is the same as some of the above situations; just translate the plane.
		\end{itemize}
			
		\noindent We also note that the lengths of the orbits/chords depends both on the numerator and denominator in $r_2/r_1$; this means that we're able to isolated the families via length of orbit. Here is a picture where $p,q$ are both odd.
		
		\begin{center}
			
			\tikzset{every picture/.style={line width=0.75pt}} %set default line width to 0.75pt        
			
			\begin{tikzpicture}[x=0.75pt,y=0.75pt,yscale=-1,xscale=1]
				%uncomment if require: \path (0,300); %set diagram left start at 0, and has height of 300
				
				%Shape: Rectangle [id:dp084270859101224]
				\draw   (170.83,30.19) -- (320.83,30.19) -- (320.83,181.19) -- (170.83,181.19) -- cycle ;
				%Straight Lines [id:da8975455064346176]
				\draw    (259.83,30.19) -- (170.83,181.19) ;
				%Straight Lines [id:da6505872731631352]
				\draw    (320.83,79.19) -- (259.83,181.19) ;
				%Straight Lines [id:da20329452868176157]
				\draw    (170.83,81) -- (200.83,30.19) ;
				%Straight Lines [id:da6181353645732979]
				\draw    (290.33,30.19) -- (201.33,181.19) ;
				%Straight Lines [id:da9544692735093416]
				\draw    (290.83,181) -- (320.83,130.19) ;
				%Straight Lines [id:da5154905317547123]
				\draw    (231.83,30.19) -- (170.83,131.19) ;
				%Straight Lines [id:da33162457575317195]
				\draw    (320.83,30.19) -- (229.83,181.19) ;
				%Shape: Circle [id:dp24951193922793413]
				\draw   (168.24,181.19) .. controls (168.24,179.76) and (169.4,178.6) .. (170.83,178.6) .. controls (172.27,178.6) and (173.43,179.76) .. (173.43,181.19) .. controls (173.43,182.63) and (172.27,183.79) .. (170.83,183.79) .. controls (169.4,183.79) and (168.24,182.63) .. (168.24,181.19) -- cycle ;
				%Shape: Circle [id:dp5419127096634146]
				\draw   (243.24,181.19) .. controls (243.24,179.76) and (244.4,178.6) .. (245.83,178.6) .. controls (247.27,178.6) and (248.43,179.76) .. (248.43,181.19) .. controls (248.43,182.63) and (247.27,183.79) .. (245.83,183.79) .. controls (244.4,183.79) and (243.24,182.63) .. (243.24,181.19) -- cycle ;
				%Shape: Circle [id:dp1150738682391661]
				\draw   (168.24,105.19) .. controls (168.24,103.76) and (169.4,102.6) .. (170.83,102.6) .. controls (172.27,102.6) and (173.43,103.76) .. (173.43,105.19) .. controls (173.43,106.63) and (172.27,107.79) .. (170.83,107.79) .. controls (169.4,107.79) and (168.24,106.63) .. (168.24,105.19) -- cycle ;
				%Shape: Circle [id:dp9641798529582497]
				\draw   (242.64,105.69) .. controls (242.64,104.26) and (243.8,103.1) .. (245.24,103.1) .. controls (246.67,103.1) and (247.83,104.26) .. (247.83,105.69) .. controls (247.83,107.13) and (246.67,108.29) .. (245.24,108.29) .. controls (243.8,108.29) and (242.64,107.13) .. (242.64,105.69) -- cycle ;
			\end{tikzpicture}
			
			\caption{Example of Reeb chords with slope $p/q=5/3$}
		\end{center}	
		
		If we pick an admissible Hamiltonian $H$, this shows that there are plenty of generators for the wrapped Floer chain complex $CW_*(\Lambda,\Lambda,H)$. Away from the nodes, the situation is tamer and the total space $M$ admits a Boothby-Wang structure (we can ensure the symplectic form is integral). The circle action gives Morse-Bott submanifolds similar to the first example above of $\C^* \times \C^*$ (and hence, we're in the QMD setting).
		
		Pascaleff gives reasons for why the generators are all concentrated in degree zero and thus, why the differential of $CW_*(\Lambda,\Lambda, H)$ is trivial; he then computes that $CW_0(\Lambda,\Lambda,H) \cong HW_*(\Lambda,\Lambda) \cong \mathbb{K}[x,y][(xy- 1)^{-1}]$. We can supply some evidence for this from a local Floer theoretic perspective. If we restrict our attention to low energy Floer strips, they must connect Reeb chords of some ``type'' to Reeb chords of the same ``type'' because of the boundary conditions. To be more precise, consider an interval $I \subset S^1$ and $T^2 \times I$. Then since $\Lambda$ intersects $T^2$ at four points, one can imagine four line segments in $T^2 \times I$ which represent the intersection with $\Lambda$. The boundary conditions imposed on Floer strips makes it so that these low energy strips must connect a Reeb chord to a translation of the Reeb chord along $I$. Thus, regardless of what the degree of the Reeb chord is, the Floer strip is not connecting Reeb chords of differing index. As such, in the local Floer complex where we take only low energy strips, there is no differential in the low-energy regime. This fact shows that we may obtain the $E^1$ page of the spectral sequence from the local Floer data. Moreover, the rank of the underlying vector space of the $E^1$ page is countably infinite which corroborates Pascaleffs calculation and so the differentials in the spectral sequence must have large kernels and not too large of images. The countable infinity is not a problem for us; this was addressed in the remark following Theorem \ref{qmdspecseq}.
		
		To summarize, Pascaleff was able to compute the wrapped Floer homology without this spectral sequence and gave a description of the triangle product to determine the ring structure. To do this, he relies on the example being a log Calabi-Yau pair which allowed him to use some mirror symmetry techniques. Here, we've given a weaker understanding of the example by using our spectral sequence but without reliance on the log Calabi-Yau condition nor mirror symmetry. Notably, the spectral sequence can be applied to non-compact Lagrangians.
	\end{example}
	
	\begin{example}
		The spectral sequence we construct from the local Floer data applies to examples beyond log Calabi-Yau. For instance, if we have any complex algebraic surface, so long as we have an antisymplectic involution that has fixed points, the fixed point set is a Lagrangian. It is convenient to take the one which fixes the real locus but is certainly not the only option.
		
		Now, let's take four generic lines in $\mathbb{CP}^2$ fixed by an antisymplectic involution. Then the union of them gives a divisor that is not anticanonical. Any given line will intersect all the other three lines, giving a total of six intersections. Blowing up at those points, we'll obtain six exceptional lines for a total of 12 nodes. We apply our study of the nodes from the previous example to these 12. Away from the nodes, the topology differs from the previous example. Before, the Morse-Bott manifolds-with-boundary were all annuli. This time, the proper transforms of the original lines give thrice-punctured spheres while the exceptional divisors continue to contribute annuli. So the local Floer data which feeds into the spectral sequence is different from before but still tractable.
		
		There are certainly many other line arrangements which produce a multitude of affine varieties and Lagrangians to which we may also apply these techniques.
	\end{example}

	\section{Concluding Remarks}
	
	Hopefully, the above examples are not exhaustive. There may be situations in which these techniques would be helpful for computing triangle products or other $A^\infty$ products on local Lagrangian Floer homology.
	
	Minimal degeneracy may also appear naturally in low-dimensional topology. One situation of particular interest was communicated to the author by Kenji Fukaya who suggested some possible relationships to Atiyah-Floer conjectures. Namely, the instanton homology of $M$, an $S^1$ bundle over a Riemann surface, has a Chern-Simons functional has behavior similar to those appearing in Kirwan’s study of moment maps. If we take a Heegaard decomposition of $M$, the moduli of flat connections on $M$ is a certain Lagrangian intersection and the local properties of the Chern-Simons functional can be related to the mildness of the intersection. It is not Morse-Bott in general but may be QMD.
		
	Another direction could be that of contact homology. Bourgeois studied Morse-Bott techniques in his PhD thesis \cite{Bourgeois}. Perhaps one could find minimally degenerate or QMD contact forms and do computations once these notion have been properly defined. 
	
	Lastly, there may also be interesting questions about minimally degeneracy itself to explore. Holm and Karshon show in \cite{HolmKarshon} that Kirwan's definition of minimal degeneracy is local. That is, if $f: M \to \R$ is a smooth function and is minimally degenerate near each critical point, then $f$ is minimally degenerate. It may be worth exploring whether the same can be said of quasi-minimally functions in both the ``Morse'' and Lagrangian setting. It seems plausible for there to be obstructions for a Lagrangian intersection to be locally QMD at each point but not globally QMD. Such obstructions are likely to be completely topological.

\bibliographystyle{alpha}
\bibliography{ref}	
	
\end{document}